\documentclass[11pt]{amsart}
  

\usepackage[colorlinks=true, citecolor=black]{hyperref}
 \usepackage{comment}
\headheight=6.15pt \textheight=8.75in \textwidth=6.5in
\oddsidemargin=0in \evensidemargin=0in \topmargin=0in

\usepackage{latexsym}
\usepackage{graphicx}
\usepackage{amssymb}
 \graphicspath{{Figures/}}
 \usepackage[show]{ed}

\newcommand*{\rom}[1]{\expandafter\@slowromancap\romannumeral #1@}

\renewcommand{\epsilon}{\varepsilon}

\newcommand{\R}{{\mathbb R}}

\newcommand{\lan}{\left\langle}
\newcommand{\ran}{\right\rangle}
\newcommand{\mc}[1]{\mathcal{#1}}
\newcommand{\e}{\varepsilon}

\newcommand{\re}{\mathbb{R}}
\newcommand{\Cc}{C_c^\infty}
\newcommand{\Id}{\operatorname{Id}}

\newcommand{\vol}{{\operatorname{Vol}}}

\newcommand{\supp}{{\operatorname{supp\,}}}

\newcommand{\inj}{\operatorname{inj}}
\newtheorem{theo}{{\sc Theorem}}

\newtheorem{cor}{{\sc Corollary}}[section]

\newtheorem{lem}[cor]{{\sc Lemma}}

\numberwithin{equation}{section}

\newenvironment{rem}[1][]{\refstepcounter{cor}{\medskip\noindent{\it Remark~\thecor :\/#1} }}{\medskip}

\newcommand{\red}[1]{{{#1}}}

\usepackage{hyperref}

\title[Defect measures of eigenfunctions with maximal $L^\infty$ growth]{Defect measures of eigenfunctions with\\ maximal $L^\infty$ growth}

\author{Jeffrey Galkowski}
\address{Department of Mathematics, Stanford University, Stanford, CA USA}
\email{jeffrey.galkowski@stanford.edu }

%

\date{}

\begin{document}

\maketitle

\begin{abstract}
We characterize the defect measures of sequences of Laplace eigenfunctions with maximal $L^\infty$ growth. As a consequence, we obtain new proofs of results on the geometry of manifolds with maximal eigenfunction growth obtained by Sogge--Toth--Zelditch~\cite{SoggeTothZelditch}, and generalize those of Sogge--Zelditch~\cite{SZ16I} to the smooth setting. We also obtain explicit geometric dependence on the constant in H\"ormander's $L^\infty$ bound for high energy eigenfunctions, improving on estimates of Donnelly~\cite{Donnelly}.
\end{abstract}
\maketitle

\section{Introduction}

Let $(M,g)$ be a $C^{\infty}$ compact manifold of dimension $n$ without boundary. Consider the solutions to
\begin{equation}
\label{e:eigenfunction}
(-\Delta_g-\lambda_j^2)u_{\lambda_j}=0,\qquad \|u_{\lambda_j}\|_{L^2}=1
\end{equation}
as $\lambda_j\to\infty$. It is well known \cite{Ava,Lev,Ho68} (see also \cite[Chapter 7]{EZB}) that solutions to~\eqref{e:eigenfunction} satisfy
\begin{equation}
\label{e:L infinity}
\|u_{\lambda_j}\|_{L^\infty(M)}\leq C\lambda_j^{\frac{n-1}{2}}
\end{equation}
and that this bound is saturated e.g. on the sphere. Estimates for $L^p$ norms of eigenfunctions improving on those given by interpolation between~\eqref{e:L infinity} and $\|u_{\lambda_j}\|_{L^2}=1$ have been available since the seminal work of Sogge~\cite{So88}. \red{Since there are examples where these estimates are sharp,} it is natural to consider situations which produce sharp examples for~\eqref{e:L infinity}. 
Previous works \cite{Berard77,I-s,TZ02,SoggeZelditch,TZ03,SoggeTothZelditch,SZ16I,SZ16II} have studied the connections between growth of $L^\infty$ norms of eigenfunctions and the global geometry of the manifold $M$. The works of Sogge~\cite{So11} and Blaire--Sogge~\cite{BlSo15, BlSo17} study similar questions for low $L^p$ norms.

In this article, we study the relationship between $L^\infty$ growth and $L^2$ concentration of eigenfunctions (this direction of inquiry was initiated in~\cite{GT}). 
We measure $L^2$ concentration of eigenfunctions using {\em{defect measures}} - a sequence $\{u_{h_j}\}$ has defect measure $\mu$ if for any $a\in C_c^\infty(T^*M)$, 
\begin{equation} 
\label{defect}
\lan a(x,h_jD) u_{h_j},u_{h_j}\ran \to \int_{T^*M}  a(x,\xi)d\mu. 
\end{equation}
We write $a(x,hD)$ for a semiclassical pseudodifferential operator given by the quantization of the symbol $a(x,\xi)$ (see \cite[Chapters 4, 14]{EZB}) and let $h_j=\lambda_j^{-1}$ when considering the solutions to~\eqref{e:eigenfunction}.

By an elementary compactness/diagonalization argument it follows that any $L^2$ bounded sequence $u_{h}$ possesses  a further subsequence that has  a defect measure in the sense of (\ref{defect}) \cite[Theorem 5.2]{EZB}.  Moreover, a standard commutator argument shows that if 
$$
p(x,hD)u=o_{L^2}(h),
$$
for $p\in S^k(T^*M)$ real valued with
$$
|p|\geq c\langle \xi\rangle^k\text{ on }|\xi|\geq R,
$$
 then $\mu$ is supported on $\Sigma:=\{p=0\}$ and is invariant under the bicharacteristic flow  of $p;$ that is, if $G_t=\exp(tH_p): \Sigma \to \Sigma$ is the bicharacteristic flow,
$ (G_t)_*\mu =\mu,\,\forall t \in \R$ \cite[Theorems 5.3, 5.4]{EZB}. \par
\noindent
\red{
\begin{rem}
We will usually write $G_t(q)$ for the bicharacteristic flow applied to a point $q\in T^*M$. However, it will sometimes be useful distinguish between the position and momentum of $q$ and in these cases we will write $q=(x,\xi)$ and write $G_t(x,\xi)$ for the bicharacteristic flow applied to $(x,\xi)\in T^*M$. 
\end{rem}
}

Rather than studying only eigenfunctions of the Laplacian, we replace $-\Delta_g-\lambda_j^2$ by a general semiclassical pseudodifferential operator and replace eigenfunctions with quasimodes. To this end, we say that $u$ is compactly microlocalized if there exists $\chi \in \Cc(\re)$ with 
$$
Op_h(1-\chi(|\xi|))u=O_{\mc{S}}(h^\infty\|u\|_{L^2(M)})
$$
{where $Op_h$ is a quantization procedure giving pseuodifferential operators on $M$ (see e.g. \cite[Appendix E]{ZwScat}, see also Appendix~\ref{a:semiclassics}).}
For $P\in \Psi^m(M)$ {i.e. an $h$-pseudodifferential operator of order $m$}, we say that $u$ is a \emph{quasimode for $P$} if 
\begin{equation*}
\label{e:quasiDef}
Pu=o_{L^2}(h),\qquad \|u\|_{L^2}=1.
\end{equation*}
\begin{rem}
{Although $u$ implicitly depends on $h$, we suppress this in our notation to avoid overburdening the writing.}
\end{rem}

\red{For $x_0\in M$,} let $\Sigma_{\red{x_0}}:=\Sigma\cap T^*_{\red{x_0}}M$ and define respectively the \emph{flow out of $\Sigma_{\red{x_0}}$} and \emph{time $T$ flowout of $\Sigma_{\red{x_0}}$} by 
$$
\Lambda_{\red{x_0}}:=\bigcup_{T=0}^\infty \Lambda_{\red{x_0},T},\qquad \Lambda_{\red{x_0},T}:=\bigcup_{t=-T}^TG_t(\Sigma_{\red{x_0}}).
$$
{\begin{rem}
Note that in the case that $P=-h^2\Delta_g-1$, $\Sigma=S^*\!M$ and $\Sigma_{\red{x_0}}=S^*_{\red{x_0}}M.$ 
\end{rem}}

Let $\mc{H}^r$ denote the Hausdorff-$r$ measure with respect to the Sasaki metric on $T^*M$ {or more precisely the metric induced on $T^*M$ by pulling back the Sasaki metric on $TM$} (see for example \cite[Chapter 9]{BlairSasaki} for a treatment of the Sasaki metric). {Note that we choose to use the Sasaki metric on $T^*M$ induced by the metric on $M$ for concreteness, but any other metric on $T^*M$ will work equally well for our purposes.}  For a Borel measure $\rho$ on $T^*M$, let $\rho_{\red{x_0}}:=\rho|_{\Lambda_{\red{x_0}}}$ i.e. $\rho_{\red{x_0}}(A):=\rho(A\cap \Lambda_{\red{x_0}})$. Recall that two Borel measures on a set $\Omega$, $\mu$ and $\rho$, are \emph{mutually singular} (written $\mu\perp \rho$) if there exist disjoint sets $N,P\subset \Omega$ so that $\Omega=N\cup P$ and $\mu(N)=\rho(P)=0$. 

The main theorem characterizes the defect measures of quasimodes with maximal growth.

\begin{theo}
\label{mainthmCont}
Let $P\in \Psi^m(M)$ be an $h$-pseudodifferential operator with real principal symbol $p$ satisfying 
\begin{equation}
\label{e:pCond}
\partial_\xi p\neq 0\text{ on }\{p=0\}.
\end{equation}
Suppose $u$ is a compactly microlocalized quasimode for $P$  with
\begin{equation}
\label{e:quasiMax}
 \limsup_{h\to 0}h^{\frac{n-1}{2}}\|u\|_{L^\infty}>0
\end{equation}
and defect measure $\mu$.  Then there exists $\red{x_0}\in M$ and $x(h)\to \red{x_0}$ so that 
\begin{equation}
\label{e:decomp}
\limsup_{h\to 0}h^{\frac{n-1}{2}}|u(x(h))|>0,\qquad \mu_{\red{x_0}}= \rho_{\red{x_0}}+f d\mc{H}^n_{\red{x_0}},\qquad 
\end{equation}
where $0\neq f\in L^1(\Lambda_{\red{x_0}},\mc{H}^n_{\red{x_0}})$, $\rho_{\red{x_0}}\perp \mc{H}^n_{\red{x_0}}$, and both $f d\mc{H}^n_{\red{x_0}}$, and $\rho_{\red{x_0}}$ are invariant under $G_t$. 
\end{theo}

One way of interpreting Theorem~\ref{mainthmCont} is that a quasimode with maximal $L^\infty$ growth near $\red{x_0}$ must have energy on a positive measure set of directions entering $T^*_{\red{x_0}}M$. That is, it must have concentration comparable to that of the zonal harmonic. (See \cite[Section 4]{GT} for a description of the defect measure of the zonal harmonic.)

Theorem~\ref{mainthmCont} is an easy consequence of the following theorem (see section~\ref{tl-tg} for the proof that Theorem~\ref{thm:local} implies Theorem~\ref{mainthmCont}).

 \begin{theo} \label{thm:local}
 Let $\red{x_0}\in M$ and $P\in \Psi^m(M)$ be an $h$-pseudodifferential operator with real principal symbol $p$ satisfying 
$$
\partial_\xi p\neq 0\text{ on }\{p=0\}.
$$
There exists a constant $C_n$ depending only on $n$ with the following property:
Suppose that $u$ is compactly microlocalized quasimode for $P$
and has defect measure $\mu$. Define $\rho_{\red{x_0}}\perp \mc{H}^n_{\red{x_0}}$ and $f\in L^1(\Lambda_{\red{x_0}};\mc{H}^n_{\red{x_0}})$ by 
$$
\mu_{\red{x_0}}=: \rho_{\red{x_0}}+ fd\mc{H}^{n}_{\red{x_0}}.
$$  
Then for all $r(h)=o(1)$, 
$$
\limsup_{h\to 0}h^{\frac{n-1}{2}}\|u\|_{L^\infty(B(\red{x_0},r(h)){)}}\leq C_n\int_{\Sigma_{\red{x_0}}} \sqrt{f}\sqrt{\frac{|\nu(H_p)|}{|\partial_{\xi}p|_g}}d\vol_{\Sigma_{\red{x_0}}}
$$
where $\nu$ is a unit (with respect to the Sasaki metric) conormal to $\Sigma_{\red{x_0}}$ in $\Lambda_{\red{x_0}}$, $\vol_{\Sigma_{\red{x_0}}}$ is the measure induced by the \red{Sasaki} metric on $T^*\!M$, and $|\partial_\xi p|_g=|\partial_\xi p\cdot \partial_x|_g$. Furthermore, $fd\mc{H}^n_{\red{x_0}}$ is $G_t$ invariant.

In particular, if $\mu_{\red{x_0}}\perp \mc{H}^n_{\red{x_0}}$, then 
$$
\|u\|_{L^\infty(B(\red{x_0},r(h)){)}}=o(h^{\frac{1-n}{2}}).
$$
\end{theo}

\noindent \begin{rem}
\label{r:compact}
\begin{enumerate}
\item {We may assume without loss of generality that $\Sigma$ is compact. This follows from the fact that $u$ is compactly microlocalized. In particular, let $\chi\in C_c^\infty(\re)$ have $Op_h(1-\chi(|\xi|))u=O_{\mc{S}}(h^\infty)$. Then $u$ is a quasimode for $\tilde{P}=P+NOp_h(\langle \xi\rangle^m)Op_h(1-\chi(|\xi|))$ and for $N$ large enough, $\{\tilde{p}=0\}$ is compact. Therefore, we may work with $\tilde{p}$ rather than $p$. This furthermore implies that we may assume $\Sigma_{\red{x_0}}$ is a manifold since $\partial_\xi p\neq 0$ on $\Sigma$. }
\item {Note that $\partial_\xi p \cdot \partial_x = d\pi H_p$ where $\pi:T^*M\to M$ is the natural projection map. Therefore, $\partial_{\xi} p\cdot \partial_x$ is a well defined invariant vector field. The appearance of this factor in Theorem~\ref{thm:local} quantifies the fact that bicharacteristics of $H_p$ are not tangent to vertical fibers. It is precisely the tangency of these bicharacteristics which causes a change of behavior when $\partial_\xi p=0$. }
\item \red{Finally, observe that if one fixes geodesic normal coordinates at $x_0$, then the Sasaki metric on $T^*_{x_0}M$ is equal to the Euclidean metric and hence, in these coordinates, $d\vol_{\Sigma_{x_0}}$ is the volume induced by the Euclidean metric.}
\end{enumerate}
\end{rem}

To see that Theorem~\ref{thm:local} applies to solutions of~\eqref{e:eigenfunction}, let $h_j=\lambda_j^{-1}$. Writing $u=u_{\lambda_j}$ and $h=h_j$, 
$$
(-h^2\Delta_g-1)u=0.
$$
Then, $(-h^2\Delta_g-1)=p(x,hD)$ with $p=|\xi|_g^2-1+hr$ and therefore, the elliptic parametrix construction shows that $u$ is compactly microlocalized. Since $\partial_{\xi_j} p=2g^{ij}\xi_i,$ $\partial_\xi p\neq 0$ on $p=0$ and Theorem~\ref{thm:local} applies. In Section~\ref{tl-tg}, we use Theorem~\ref{thm:local} with $P=-h^2\Delta_g-1$ to give explicit bounds on the constant $C$ in \eqref{e:L infinity} in terms of the \emph{injectivity radius of $M$}, $\inj(M)$, thereby improving on the bounds of~\cite{Donnelly} at high energies.

\begin{cor}
\label{c:geomDep}
There exists $\tilde{C}_n>0$ depending only on $n$ so that for all $(M,g)$ compact, boundaryless Riemannian manifolds of dimension $n$ and all $\e>0$, there exists $\lambda_0=\lambda_0(\e,M,g)>0$ so that for $\lambda_j>\lambda_0$ and $u_{\lambda_j}$ solving~\eqref{e:eigenfunction}
$$
\|u_{\lambda_j}\|_{L^\infty}\leq\Big(\frac{\tilde{C}_n}{\inj(M)^{1/2}}+\e\Big)\lambda_j^{\frac{n-1}{2}}.
$$
\end{cor}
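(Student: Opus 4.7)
The plan is to apply Theorem~\ref{thm:local} at an accumulation point of the $L^\infty$-maximizing sequence and bound the resulting integral on $\Sigma_{x_0}$ using $G_t$-invariance together with the fact that the defect measure has total mass at most $1$. I would argue by contradiction: suppose the corollary fails, so there exist $\varepsilon > 0$, $\lambda_j \to \infty$, and eigenfunctions $u_{\lambda_j}$ with $\|u_{\lambda_j}\|_{L^\infty} > (\tilde C_n/\inj(M)^{1/2} + \varepsilon)\lambda_j^{(n-1)/2}$. Pick $x_j \in M$ realizing each $L^\infty$-maximum and, after passing to a subsequence, arrange that $x_j \to x_0$ and that $u_{h_j}$ (with $h_j := \lambda_j^{-1}$) has a defect measure $\mu$. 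Choose $r(h) \to 0$ slowly enough that $x_j \in B(x_0, r(h_j))$ eventually. Since the elliptic parametrix makes $u_{h_j}$ compactly microlocalized, Theorem~\ref{thm:local} with $P = -h^2\Delta_g - 1$ and $p = |\xi|_g^2 - 1$ yields
$$\limsup_{h \to 0} h^{(n-1)/2}\|u\|_{L^\infty(B(x_0,r(h)))} \le C_n \int_{\Sigma_{x_0}} \sqrt{f}\sqrt{\frac{|\nu(H_p)|}{|\partial_\xi p|_g}}\,d\vol_{\Sigma_{x_0}}.$$

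In geodesic normal coordinates at $x_0$, $\partial_x g^{ij}(x_0) = 0$, so $H_p(x_0,\xi) = 2g^{ij}\xi_j\partial_{x_i}$ is purely horizontal in the Sasaki sense, of Sasaki norm $|\partial_\xi p|_g = 2$. Since $T_\xi\Sigma_{x_0}$ is entirely vertical, the unit conormal $\nu$ is horizontal and $\nu(H_p) = |\partial_\xi p|_g$ on all of $\Sigma_{x_0}$, so the ratio in the integrand equals $1$. Cauchy--Schwarz then yields
$$C_n\int_{\Sigma_{x_0}} \sqrt{f}\,d\vol_{\Sigma_{x_0}} \le C_n\sqrt{\vol(S^{n-1})}\left(\int_{\Sigma_{x_0}} f\,d\vol_{\Sigma_{x_0}}\right)^{1/2}.$$

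To estimate $\int_{\Sigma_{x_0}} f\,d\vol_{\Sigma_{x_0}}$ I would exploit invariance. For $T < \inj(M)$ the map $\psi(s,q) := G_s(q)$ from $(-T,T) \times \Sigma_{x_0}$ into $\Lambda_{x_0,T}$ is an embedding: injectivity of $\exp_{x_0}$ on the disk of radius $\inj(M)$ rules out distinct flowlines meeting at a common base point with a common tangent. At $s = 0$ the Sasaki $n$-volume on $\Lambda_{x_0}$ pulls back to $|\nu(H_p)(q)|\,ds\,d\vol_{\Sigma_{x_0}}(q)$ (the tangent directions to $\Sigma_{x_0}$ are orthogonal to $H_p$, whose transverse component has length $|\nu(H_p)|$), and the $G_t$-invariance of $f\,d\mc{H}^n_{x_0}$ from Theorem~\ref{thm:local} promotes this factorization to all $s$, giving
$$\int_{\Lambda_{x_0,T}} f\,d\mc{H}^n_{x_0} = 2T\int_{\Sigma_{x_0}} f|\nu(H_p)|\,d\vol_{\Sigma_{x_0}}.$$
The left side is at most $\mu(T^*M) \le 1$, and $|\nu(H_p)| \equiv 2$, so $\int_{\Sigma_{x_0}} f\,d\vol_{\Sigma_{x_0}} \le 1/(4T)$; sending $T \uparrow \inj(M)$ produces the bound $\tilde C_n/\sqrt{\inj(M)}$ with $\tilde C_n := C_n\sqrt{\vol(S^{n-1})}/2$, contradicting the hypothesized lower bound on the $L^\infty$-norm.

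The main obstacle is the Fubini-type identity for $\int f\,d\mc{H}^n_{x_0}$ on the flowout: one must compare the Sasaki $n$-volume on $\Lambda_{x_0,T}$ with the $(n-1)$-volume on $\Sigma_{x_0}$ along the flow, and the key point is that $G_t$-invariance of $f\,d\mc{H}^n_{x_0}$ trivializes the $s$-dependence of the pulled-back density, reducing everything to the time-$0$ slice and sidestepping any need to track how the Sasaki metric itself evolves under the geodesic flow.
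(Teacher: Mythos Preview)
Your argument is correct and is, at its core, the same Cauchy--Schwarz plus flow-invariance computation the paper uses: bound $\int_{\Sigma_{x_0}}\sqrt{f}$ by $(\vol(S^{n-1}))^{1/2}(\int_{\Sigma_{x_0}}f)^{1/2}$, then convert $\int_{\Sigma_{x_0}}f\,|\nu(H_p)|$ into $(2T)^{-1}\int_{\Lambda_{x_0,T}}f\,d\mc{H}^n_{x_0}\le (2T)^{-1}$ via Lemma~\ref{l:inv}. The paper, however, does not apply Theorem~\ref{thm:local} directly; it first proves the spectral cluster estimate of Corollary~\ref{l:volcano} and invokes Lemma~\ref{l:suppRecur} to restrict $f$ to the recurrent set $\mc{R}_{x_0}$, so that the Cauchy--Schwarz step yields $\vol_{\Sigma_{x_0}}(\mc{R}_{x_0})$ rather than $\vol(S^{n-1})$ and the admissible $T$ is $\inf_{\mc{R}_{x_0}}T_{x_0}$. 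For Corollary~\ref{c:geomDep} itself this refinement is irrelevant (one simply bounds $\vol(\mc{R}_{x_0})\le \vol(S^{n-1})$), so your direct route is cleaner; the paper's detour is there because it simultaneously proves the stronger pointwise cluster bound~\eqref{e:volcano}.

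One small correction: your claim that the map $(s,q)\mapsto G_s(q)$ is injective on $(-T,T)\times\Sigma_{x_0}$ for all $T<\inj(M)$ is too optimistic. A failure of injectivity produces a geodesic loop at $x_0$ of $G_t$-flow time $|s_1-s_2|<2T$; since $G_t$ is the speed-$2$ geodesic flow this is a loop of length $<4T$, and the inequality $L(x_0,M)\ge 2\inj(M)$ quoted in the paper (just after Corollary~\ref{l:volcano}) only rules this out when $T\le \inj(M)/2$. This changes $\tilde C_n$ by a factor of $\sqrt{2}$ and has no effect on the statement being proved.
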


Theorem~\ref{thm:local} is sharp in the following sense. Let $P=-h^2\Delta_g-1$ and $G_t$ as above.

\begin{theo}
\label{thm:modes}
Suppose there exists $\red{z_0}\in M$, $T>0$ so that $G_T(\red{z_0},\xi)=(\red{z_0},\xi)$ for all $(\red{z_0},\xi)\in S^*_{\red{z_0}}M$.  Let $\rho_{\red{z_0}}\perp \mc{H}^n_{\red{z_0}}$ be a Radon measure on $\Lambda_{\red{z_0}}$ invariant under $G_t$ and $0\leq f\in L^1(\Lambda_{\red{z_0}}, \mc{H}^n_{\red{z_0}})$ be invariant under $G_t$ so that 
$$
\|f\|_{L^1(\Lambda_{\red{z_0}}, \mc{H}^n_{\red{z_0}})}+\rho_{\red{z_0}}(\Lambda_{\red{z_0}})=1.
$$ Then there exist $h_j\to 0$ and $\{u_{h_j}\}_{j=1}^\infty$ solving 
$$
(-h_j^2\Delta_g-1)u_{h_j}=o(h_j),\qquad \|u_{h_j}\|_{L^2}=1,\qquad  \limsup_{j\to \infty}h_j^{\frac{n-1}{2}}\|u_{h_j}\|_{L^\infty}\geq  (2\pi )^{\frac{1-n}{2}}\int_{\Sigma_{\red{z_0}}}\sqrt{f}d\textup{Vol}_{\Sigma_{\red{z_0}}}
$$
and having defect measure $\mu=\rho_{\red{z_0}} +fd\vol_{\Lambda_{\red{z_0}}}$.
\end{theo}

Notice that we do not claim the existence of exact eigenfunctions having prescribed defect measures in Theorem~\ref{thm:modes}, instead constructing only quasimodes.

\subsection{Relation with previous results}

As far as the author is aware, the only previous work giving conditions on the defect measures of eigenfunctions with maximal $L^\infty$ growth is \cite{GT}. Theorem~\ref{thm:local} improves on the conditions given in \cite[Theorem 3]{GT}; replacing $\mc{H}_{\red{x_0}}^n(\supp \mu_{\red{x_0}})=0$ with the sharp condition $\mu_{\red{x_0}}\perp \mc{H}^n_{\red{x_0}}$. To see an example of how these conditions differ, fix $\red{x_0}\in M$ such that every geodesic through $\red{x_0}$ is closed and let $\{\xi_k\}_{k=1}^\infty\subset S^*_{\red{x_0}}M$ be a countable dense subset. Suppose that the defect measure of $\{u_{\lambda_j}\}$ is given by 
$$
\mu=\sum_k a_k\delta_{\gamma_k},\qquad a_k>0
$$
where $\gamma_k$ is the geodesic emanat{ing} from $(\red{x_0},\xi_k)$. Then $\supp \mu_{\red{x_0}}=\Lambda_{\red{x_0}}$, but $\mu_{\red{x_0}}\perp \mc{H}^n_{\red{x_0}}$, so Theorem~\ref{thm:local} applies to this sequence but the results of \cite{GT} do not. Furthermore, Theorem~\ref{thm:local} gives quantitative estimates on the growth rates of quasimodes in terms of their defect measures.

We are able to draw substantial conclusions about the global geometry of a manifold $M$ having quasimodes with maximal $L^\infty$ growth from Theorem~\ref{thm:local}. The results of Sogge--Toth--Zelditch \cite[Theorems 1(1), 2]{SoggeTothZelditch} and hence also Sogge--Zelditch \cite[Theorem 1.1]{SoggeZelditch} are corollaries of Thoerem~\ref{thm:local}. For $\red{x_0}\in M$, define the map $T_{\red{x_0}}:\Sigma_{\red{x_0}}\to \re\sqcup\{\infty\}$ by 
\begin{equation}
\label{e:returnTime}
T_{\red{x_0}}(\xi):=\inf\{t>0\mid G_t(\red{x_0},\xi)\in \Sigma_{\red{x_0}}\}.
\end{equation}
Then, define the \emph{loop set} by
$$
\mc{L}_{\red{x_0}}:=\{ \xi\in \Sigma_{\red{x_0}} \mid T_{\red{x_0}}(\xi)<\infty\},
$$
and the \emph{first return map} $\eta_{\red{x_0}}:\mc{L}_{\red{x_0}}\to \Sigma_{\red{x_0}}$ by
$$
G_{T_{\red{x_0}}(\xi)}(\red{x_0},\xi)=(\red{x_0},\eta_{\red{x_0}}(\xi)).
$$
Finally, define the set of \emph{recurrent points} by
\begin{equation}
\label{e:recurDef}
\mc{R}_{\red{x_0}}:=\left\{\xi\in \Sigma_{\red{x_0}}\mid \xi\in \left(\bigcap_{T>0}\overline{\bigcup_{ t>T}G_t(\red{x_0},\xi)\cap \Sigma_{\red{x_0}}}\right)\bigcap\left(\bigcap_{T>0}\overline{\bigcup_{ t>T}G_{-t}(\red{x_0},\xi)\cap \Sigma_{\red{x_0}}}\right)\right\},
\end{equation}
where the closure is with respect to the subspace topology on $\Sigma_{\red{x_0}}$. 

\begin{cor}
\label{c:recur}
Let $(M,g)$ be a compact boundaryless Riemannian manifold and $P$ satisfy~\eqref{e:pCond}. Suppose that $\vol_{\Sigma_{\red{x_0}}}(\mc{R}_{\red{x_0}})=0$. Then for any $r(h)=o(1)$ and $u$ a compactly microlocalized quasimode for $P$,  
$$
\|u\|_{L^\infty(B(\red{x_0},r(h)){)}}=o(h^{\frac{1-n}{2}}).
$$
\end{cor}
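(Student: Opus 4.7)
My strategy is to reduce to the ``In particular'' clause of Theorem~\ref{thm:local}: it suffices to show that, under the hypothesis $\vol_{\Sigma_{x_0}}(\mc{R}_{x_0})=0$, any defect measure $\mu$ of $u$ satisfies $\mu_{x_0}\perp \mc{H}^n_{x_0}$. Decompose $\mu_{x_0}=\rho_{x_0}+f\,d\mc{H}^n_{x_0}$ and set $\nu:=f\,d\mc{H}^n_{x_0}$. By Theorem~\ref{thm:local}, $\nu$ is a $G_t$-invariant Borel measure on $\Lambda_{x_0}$, and because $\|u\|_{L^2}=1$ and we may assume $\Sigma$ is compact by Remark~\ref{r:compact}, $\nu$ is finite. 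The goal is to show $\nu\equiv 0$.

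First I would apply the Poincar\'e recurrence theorem to the flow $G_t$ with the finite invariant measure $\nu$ on the second countable space $\Lambda_{x_0}$ (formally, by running the classical discrete-time theorem on $G_1$ against a countable basis of open sets in $T^*M$). Since $G_{-t}$ also preserves $\nu$, this gives that $\nu$-almost every $q\in\Lambda_{x_0}$ is recurrent in both forward and backward time: for every neighborhood $V\ni q$ and every $T>0$, there are $t^\pm$ with $\pm t^\pm>T$ and $G_{t^\pm}(q)\in V$. The transversality~\eqref{e:pCond} ensures that $H_p$ has a nonzero horizontal component and so is transverse to the vertical submanifold $\Sigma_{x_0}$; hence $(\xi,t)\mapsto G_t(x_0,\xi)$ is a local diffeomorphism from $\Sigma_{x_0}\times(-\e,\e)$ onto its image in $\Lambda_{x_0}$. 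Writing a recurrent point as $q=G_s(x_0,\xi)$ and translating the bi-recurrence of $q$ through a flow-box chart based at $\xi$ reads off the $\Sigma_{x_0}$-coordinates of the returns of $q$, producing exactly the two conditions in~\eqref{e:recurDef}, so $\xi\in\mc{R}_{x_0}$. Thus $\nu$ is concentrated on
\[
R:=\bigcup_{s\in\R}G_s\big(\{(x_0,\xi):\xi\in\mc{R}_{x_0}\}\big)\subset \Lambda_{x_0}.
\]

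To conclude, I would cover $\Lambda_{x_0}$ by countably many flow-box charts $\Phi_j:W_j\times(-\e_j,\e_j)\to\Lambda_{x_0}$ with $W_j\subset\Sigma_{x_0}$ open (possible by the compactness reduction of Remark~\ref{r:compact}). In each chart $\mc{H}^n_{x_0}$ pulls back to a smooth positive multiple of $d\vol_{\Sigma_{x_0}}\otimes dt$, while $R$ pulls back to $(\mc{R}_{x_0}\cap W_j)\times(-\e_j,\e_j)$, so the hypothesis $\vol_{\Sigma_{x_0}}(\mc{R}_{x_0})=0$ forces $\mc{H}^n_{x_0}(R)=0$. Combined with $\nu\ll\mc{H}^n_{x_0}$ and $\supp\nu\subseteq R$, this yields $\nu\equiv 0$, so $\mu_{x_0}=\rho_{x_0}\perp\mc{H}^n_{x_0}$, and the final clause of Theorem~\ref{thm:local} delivers the desired $o(h^{\frac{1-n}{2}})$ bound. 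The main technical point is the translation in the second paragraph: one must convert recurrence of $q\in\Lambda_{x_0}$ under $G_t$ into the closure-based recurrence of $\xi$ built into~\eqref{e:recurDef}, which is precisely where the transversality granted by~\eqref{e:pCond} (and hence the flow-box structure of $\Lambda_{x_0}$ over $\Sigma_{x_0}$) is indispensable.
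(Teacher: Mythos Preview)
Your approach is correct and matches the paper's: the paper packages the Poincar\'e-recurrence step as Lemma~\ref{l:suppRecur} (showing $f|_{\Sigma_{x_0}}$ vanishes a.e.\ off $\mc{R}_{x_0}$) and then feeds $\int_{\Sigma_{x_0}}\sqrt f\,d\vol_{\Sigma_{x_0}}=0$ into Theorem~\ref{thm:local}. One minor point in your last paragraph: flow-box charts $W_j\times(-\e_j,\e_j)$ based at $\Sigma_{x_0}$ only cover a collar $\Lambda_{x_0,\e}$ rather than all of $\Lambda_{x_0}$, so after showing $\nu=0$ there you should invoke $G_t$-invariance to conclude $\nu\equiv 0$ (or, as the paper does via Lemma~\ref{l:inv}, work directly on $\Sigma_{x_0}$ with the induced measure).
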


Moreover, the forward direction of \cite[Theorem 1.1]{SZ16I} with the analyticity assumption removed is an easy corollary of Theorem~\ref{thm:local}. To state the theorem recall that $d\vol_{\Sigma_{\red{x_0}}}$ denote the measure induced on $\Sigma_{\red{x_0}}$ from the \red{Sasaki} metric on $T^*M$. We define the unitary Perron--Frobenius operator $U_{\red{x_0}}:L^2(\mc{R}_{\red{x_0}}, d\vol_{\Sigma_{\red{x_0}}})\to L^2(\mc{R}_{\red{x_0}},d\vol_{\Sigma_{\red{x_0}}})$ by
\begin{equation}
\label{e:pF}
U_{\red{x_0}}(f)(\xi):= \sqrt{J_{\red{x_0}}(\xi)}f(\eta_{\red{x_0}}(\xi)),
\end{equation}
where, writing 
$$
G_t(\red{x_0},\xi)=(x_t(\red{x_0},\xi),\eta_t(\red{x_0},\xi)),
$$
we have that
\begin{equation}
\label{e:jacobian}
J_{\red{x_0}}(\xi)=\big|\det D_\xi \eta_t|_{t=T_{\red{x_0}}(\xi)}\big|
\end{equation}
 is the Jacobian factor so that for $f\in L^1(\Sigma_{\red{x_0}})$ supported on $\mc{L}_{\red{x_0}}$, 
$$
\int \eta_{\red{x_0}}^*f J_{\red{x_0}}(\xi)d\vol_{\Sigma_{\red{x_0}}}=\int f(\xi)d\vol_{\Sigma_{\red{x_0}}}.
$$ 
See \cite[Section 4]{Saf88} for a more detailed discussion of $U_{\red{x_0}}$. We say that $\red{x_0}$ is \emph{dissipative} if 
\begin{equation}
\label{e:dissipative}
\Big\{f\in L^2\Big(\mc{R}_{\red{x_0}},d\vol_{\Sigma_{\red{x_0}}}\Big)\,\Big|\, U_{\red{x_0}}(f)=f\Big\}=\{0\}.
\end{equation}
\begin{cor}
\label{c:invariant}
Let $(M,g)$ be a compact boundaryless Riemannian manifold and $P$ satisfy~\eqref{e:pCond}. Suppose that $\red{x_0}$ is dissipative. Then for $r(h)=o(1)$ and $u$ a compactly microlocalized quasimode for $P$,
$$
\|u\|_{L^\infty(B(\red{x_0},r(h)))}=o(h^{\frac{1-n}{2}}).
$$
\end{cor}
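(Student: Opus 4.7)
The plan is to apply Theorem~\ref{thm:local} and reduce the corollary to showing $f \equiv 0$ a.e., where $\mu_{x_0} = \rho_{x_0} + f\, d\mc{H}^n_{x_0}$ is the decomposition guaranteed by the theorem. Since Theorem~\ref{thm:local} asserts that $f\, d\mc{H}^n_{x_0}$ is a finite $G_t$-invariant measure on $\Lambda_{x_0}$, the strategy is to extract from it an $L^2$ fixed point of the Perron--Frobenius operator $U_{x_0}$; dissipativity~\eqref{e:dissipative} then forces the fixed point to vanish, hence $f = 0$ a.e., and the last assertion of Theorem~\ref{thm:local} yields the stated $o(h^{(1-n)/2})$ decay.

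To produce the fixed point, I would exploit the product structure of $\Lambda_{x_0}$ near $\Sigma_{x_0}$. Condition~\eqref{e:pCond} ensures that $H_p$ is transverse to $\Sigma_{x_0}$, so the flow map $(\xi,t)\mapsto G_t(x_0,\xi)$ is a local diffeomorphism from $\Sigma_{x_0}\times(-\epsilon,\epsilon)$ onto a tubular neighborhood of $\Sigma_{x_0}$ in $\Lambda_{x_0}$. A coarea computation in the Sasaki metric identifies $\mc{H}^n_{x_0}$ locally with $\tfrac{|\partial_\xi p|_g}{|\nu(H_p)|}\, d\vol_{\Sigma_{x_0}}\otimes dt$. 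The $G_t$-invariance of $f\, d\mc{H}^n_{x_0}$ then forces $f$ (times the appropriate Jacobian) to descend to a density $g$ on $\Sigma_{x_0}$ such that $g\, d\vol_{\Sigma_{x_0}}$ is invariant under the first return map $\eta_{x_0}$ wherever it is defined. Finiteness of $f\, d\mc{H}^n_{x_0}$ gives $g\in L^1(\Sigma_{x_0},d\vol_{\Sigma_{x_0}})$, so Poincar\'e recurrence applied to the discrete system $(\mc{L}_{x_0},\eta_{x_0}, g\, d\vol_{\Sigma_{x_0}})$ places its support inside $\mc{R}_{x_0}$ up to a null set.

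Finally, translating $(\eta_{x_0})_*(g\, d\vol_{\Sigma_{x_0}}) = g\, d\vol_{\Sigma_{x_0}}$ into a pointwise statement via a change of variables produces $g(\xi)=J_{x_0}(\xi)\, g(\eta_{x_0}(\xi))$ a.e.\ on $\mc{R}_{x_0}$, which is exactly $U_{x_0}(\sqrt{g})=\sqrt{g}$ with $U_{x_0}$ as in~\eqref{e:pF}. Since $g\in L^1$, we have $\sqrt{g}\in L^2(\mc{R}_{x_0},d\vol_{\Sigma_{x_0}})$, and dissipativity forces $\sqrt{g}\equiv 0$, hence $f\equiv 0$. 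I expect the main obstacle to be the second step: carrying out the disintegration of $f\, d\mc{H}^n_{x_0}$ in the flow box carefully enough that the resulting transverse Jacobian matches $J_{x_0}$ from~\eqref{e:jacobian}, so that $\eta_{x_0}$-invariance of $g\, d\vol_{\Sigma_{x_0}}$ lines up exactly with the normalization in the definition of $U_{x_0}$ rather than some other weighted operator.
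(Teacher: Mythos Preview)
Your proposal is correct and follows essentially the same route as the paper: reduce to $f\equiv 0$ via Theorem~\ref{thm:local}, disintegrate $f\,d\mc{H}^n_{x_0}$ in flow-box coordinates (the paper packages this as Lemma~\ref{l:inv}), use Poincar\'e recurrence to confine the support to $\mc{R}_{x_0}$ (packaged as Lemma~\ref{l:suppRecur}), and then read off $U_{x_0}\sqrt{f}=\sqrt{f}$ from $\eta_{x_0}$-invariance of the transverse density. The step you flag as the main obstacle---matching the flow-box Jacobian with $J_{x_0}$---is precisely where the paper spends its effort, carrying out the limit over partitions of the return-time level sets to pass from continuous $G_t$-invariance to discrete $\eta_{x_0}$-invariance with the correct normalization.
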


The dynamical arguments in \cite{SZ16II} show that if $(M,g)$ is a real analytic surface and $P=-h^2\Delta_g-1$, then $\red{x_0}$ being non-dissipative implies that $\red{x_0}$ is a periodic point for the geodesic flow, i.e. a point so that there is a $T>0$ so that every geodesic starting from $(\red{x_0},\xi)\in S^*_{\red{x_0}}M$ smoothly closes at time $T$. 

\subsection{Comments on the proof}

While the assumption $Pu=o_{L^2}(h)$ implies a global assumption on $u$, similar to that in \cite{GT}, the analysis here is entirely local. The global consequences in Corollaries~\ref{c:recur} and~\ref{c:invariant} follow from dynamical arguments using invariance of defect measures. 

We take a different approach from that in \cite{GT} choosing to base our method on the Koch--Tataru--Zworski method \cite{KTZ} rather than explicit knowledge of the spectral projector. This approach gives a more explicit explanation for the $L^\infty$ improvements from defect measures. In Section~\ref{s:laplace} we sketch the proof of Theorem~\ref{thm:local} in the case that $\mu_{\red{x_0}}\perp \mc{H}^n_{\red{x_0}}$ using the spectral projector.

The idea behind our proof is to estimate the absolute value of $u$ at $\red{x_0}$ in terms of the degree to which energy concentrates along each bicharacteristic passing through $\Sigma_{\red{x_0}}$. Either too much localization or too little localization will yield an improvement over the naive bound. By covering $\Lambda_{\red{x_0}}$ with appropriate cutoffs to tubes around bicharacteristics we are then able to give $o(h^{\frac{1-n}{2}})$ bounds whenever $\mu_{\red{x_0}}\perp \mc{H}^n_{\red{x_0}}$.  The proof relies, roughly, on the fact that if a compactly microlocalized function $u$ on $\R^m$ has defect measure supported at $(\red{y}_0,\red{\eta}_0)$, then $\|u\|_{L^\infty}=o(h^{-m/2})$ rather than the standard estimate $O(h^{-m/2}).$
\\

\noindent{\sc Acknowledgements} The author would like to thank John Toth for many stimulating discussions and for comments on a previous version. The author is grateful to the referee for careful reading and many helpful comments which improved the exposition. Thanks also to the National Science Foundation for support under the Mathematical Sciences Postdoctoral Research Fellowship  DMS-1502661.

\section{Consequences of Theorem~\ref{thm:local}}
\label{tl-tg}

We first formulate a local result matching those in \cite{SoggeZelditch, SoggeTothZelditch} more closely.

\begin{cor}
\label{c:lessLocal}
Let $\red{x_0}\in M$ and $P\in \Psi^m(M)$ satisfying the assumption of Theorem \ref{thm:local}. 
Then there exists a constant $C_n$ depending only on $n$ with the following property. Suppose that $u$ is a compactly microlocalized quasimode for $P$, and has defect measure $\mu$. Define $\rho_{\red{x_0}}\perp \mc{H}^n_{\red{x_0}}$ and $f\in L^1(\Lambda_{\red{x_0}};\mc{H}^n_{\red{x_0}})$ by
$$
\mu_{\red{x_0}}=:\rho_{\red{x_0}} +fd\mc{H}^n_{\red{x_0}}.
$$ 
Then for all $\e>0$, there exists a neighborhood $\mc{N}(\e)$ of $\red{x_0}$ and $h_0(\e)$ such that for $0<h<h_0(\e)$,
$$
\|u\|_{L^\infty(\mc{N}(\e))}\leq h^{-\frac{n-1}{2}}\left(C_n\int_{\Sigma_{\red{x_0}}}\sqrt{f}\sqrt{\frac{|\nu(H_p)|}{|\partial_\xi p |_g}}d\vol_{\Sigma_{\red{x_0}}}+\e\right).
$$
\end{cor}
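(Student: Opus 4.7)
The plan is to deduce Corollary~\ref{c:lessLocal} from Theorem~\ref{thm:local} by a contradiction argument, the point being simply that the Theorem's statement about $o(1)$-shrinking balls around $x_0$ can be upgraded to fixed (shrinking) neighborhoods at the cost of an arbitrarily small additive loss $\e$.

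Suppose the Corollary fails. Then there exist $\e_0>0$, a compactly microlocalized quasimode $u$ for $P$ with defect measure $\mu$, and for every $k\in\N$ a value $0<h_k<1/k$ and a point $x_k\in B(x_0,1/k)$ for which
\[
|u(x_k)|>h_k^{-(n-1)/2}\Big(C_n\int_{\Sigma_{x_0}}\sqrt{f}\sqrt{\tfrac{|\nu(H_p)|}{|\partial_\xi p|_g}}\,d\vol_{\Sigma_{x_0}}+\e_0\Big).
\]
After passing to a subsequence we may assume $h_k$ is strictly decreasing to $0$; by construction $d(x_0,x_k)\to 0$ as well.

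Next I would construct a function $r:(0,\infty)\to(0,\infty)$ with $r(h)=o(1)$ as $h\to 0$ such that $x_k\in B(x_0,r(h_k))$ for every $k$. A concrete choice is $r(h)=1/k$ on $h\in(h_{k+1},h_k]$; then $r(h_k)=1/k\geq d(x_0,x_k)$ so $x_k\in B(x_0,r(h_k))$, while monotonicity of $1/k$ forces $r(h)\to 0$ as $h\to 0$. Consequently
\[
h_k^{(n-1)/2}\|u\|_{L^\infty(B(x_0,r(h_k)))}\geq h_k^{(n-1)/2}|u(x_k)|>C_n\int_{\Sigma_{x_0}}\sqrt{f}\sqrt{\tfrac{|\nu(H_p)|}{|\partial_\xi p|_g}}\,d\vol_{\Sigma_{x_0}}+\e_0,
\]
so
\[
\limsup_{h\to 0}h^{(n-1)/2}\|u\|_{L^\infty(B(x_0,r(h)))}\geq C_n\int_{\Sigma_{x_0}}\sqrt{f}\sqrt{\tfrac{|\nu(H_p)|}{|\partial_\xi p|_g}}\,d\vol_{\Sigma_{x_0}}+\e_0,
\]
directly contradicting Theorem~\ref{thm:local} applied with this $r(h)$.

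There is essentially no substantive obstacle: the content of the Corollary is already in Theorem~\ref{thm:local}, and the only subtlety is extracting a single $o(1)$ rate $r(h)$ that simultaneously dominates the distances $d(x_0,x_k)$ at $h=h_k$, which the piecewise-constant construction above accomplishes. Everything else — that $f$ and $\rho_{x_0}$ are independent of the neighborhood one bounds $u$ on, and that the quasimode hypothesis and its defect measure are intrinsic to $u$ — is built into the hypotheses we inherit from Theorem~\ref{thm:local}.
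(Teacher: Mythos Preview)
Your proof is correct and follows essentially the same contradiction scheme as the paper: negate the conclusion to produce, for each scale, a small $h$ and a nearby point where the bound is violated, then package these into a sequence $h_k\to 0$, $x_k\to x_0$ that contradicts Theorem~\ref{thm:local}. The paper builds the sequence by an inductive halving argument (taking $h_j\le h_{j-1}/2$ and $x_j\in B(x_0,r_02^{-j})$) and then simply says this ``contradicts Theorem~\ref{thm:local}''; you are in fact slightly more explicit than the paper in spelling out the piecewise-constant $r(h)$ that converts the sequence into a genuine $o(1)$ radius function.
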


\begin{proof}[Proof that Theorem~\ref{thm:local} implies Corollary~\ref{c:lessLocal}]
Let 
$$
\tilde{A}_{\red{x_0}}:=C_n\int_{\Sigma_{\red{x_0}}} \sqrt{f}\sqrt{\frac{\nu(H_p)}{|\partial_\xi p|_g}}d\vol_{\Sigma_{\red{x_0}}}
$$
and suppose that there exists $\e>0$ such that for all $r>0$, 
\begin{equation}
\label{e:blah}
\limsup_{h\to 0}h^{\frac{1-n}{2}}\|u_h\|_{L^\infty(B(\red{x_0},r))}>\tilde{A}_{\red{x_0}}+\e.
\end{equation}

Fix $r_0>0$. Then by~\eqref{e:blah} there exists $x\in B(\red{x_0},r_0)$, $h_0>0$ so that 
$$
|u_{h_0}(x)|h_0^{\frac{n-1}{2}}\geq \tilde{A}_{\red{x_0}} + \frac{\e}{2}.
$$ 
Assume that there exist $\{h_j\}_{j=0}^N$ and $\{x_j\}_{j=0}^N$ so that 
$$
h_j\leq \frac{h_{j-1}}{2},\qquad x_j \in B(\red{x_0},r_02^{-j}),\qquad h_j^{\frac{n-1}{2}}|u(x_j)|\geq \tilde{A}_{\red{x_0}}+ \frac{\e}{2}.
$$

By~\eqref{e:blah}, there exists $h_k\downarrow 0$ and $x_k\in B(\red{x_0},r_02^{-N-1})$ such that 
$$
h_k^{\frac{1-n}{2}} |u_{h_k}(x_k)|\geq \tilde{A}_{\red{x_0}} + \frac{\e}{2}.
$$
Therefore, we can choose $k_0$ large enough so that $h_{k_0}\leq \frac{h_N}{2}$ and let $(h_{N+1},x_{N+1})=(h_{k_0},x_{k_0})$, Hence, by induction, there exists 
$h_j\downarrow 0$, $x_j\to \red{x_0}$ such that 
$$
h_{j}^{\frac{n-1}{2}}|u_{h_j}(x_j)|\geq \tilde{A}_{\red{x_0}} + \frac{\e}{2},
$$
contradicting Theorem~\ref{thm:local}.
\end{proof}

\begin{proof}[Proof that Theorem~\ref{thm:local} implies Theorem~\ref{mainthmCont}]
Compactness of $M$ together with Corollary~\ref{c:lessLocal} with $f\equiv 0$ implies the contrapositive of Theorem~\ref{mainthmCont}, in particular, if $\mu_{\red{x_0}}\perp \mc{H}^n_{\red{x_0}}$ for all $\red{x_0}$, then $\|u\|_{L^\infty}=o(h^{\frac{1-n}{2}}).$
\end{proof}

\subsection{Proof of Corollaries~\ref{c:recur} and \ref{c:invariant} from Theorem~\ref{thm:local}}

\begin{lem}
\label{l:suppRecur}
Fix $\red{x_0}\in M$ and suppose that $u$ is compactly microlocalized with $Pu=o_{L^2}(h)$. Define $\rho_{\red{x_0}}\perp \mc{H}^n_{\red{x_0}}$ and $f\in L^1(\Lambda_{\red{x_0}};\mc{H}^n_{\red{x_0}})$ by 
$$
\mu_{\red{x_0}}=\rho_{\red{x_0}}+fd\mc{H}^n_{\red{x_0}}.
$$ 
Then {$f|_{\Sigma_{\red{x_0}}}\in L^1(\vol_{\Sigma_{\red{x_0}}})$ and $ f|_{\Sigma_{\red{x_0}}}(1-1_{\mc{R}_{\red{x_0}}})=0$ almost everywhere.}
\end{lem}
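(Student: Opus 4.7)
The argument has two essentially independent ingredients: a flow-box disintegration that produces $f|_{\Sigma_{x_0}}$ and puts it in $L^1$, followed by the classical Poincar\'e recurrence theorem applied to the first return map $\eta_{x_0}$. The main obstacle is Step~1, where $f$ is only defined modulo $\mc{H}^n_{x_0}$-null sets on $\Lambda_{x_0}$ and $\Sigma_{x_0}$ itself is such a null set; the $G_t$-invariance of the absolutely continuous part $f\,d\mc{H}^n_{x_0}$ supplied by Theorem~\ref{thm:local}, rather than just the invariance of $\mu_{x_0}$, is what lets us extract a canonical trace.

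\textbf{Step 1 (disintegration).} Since $\partial_\xi p\neq 0$ on $\Sigma$, the Hamilton field $H_p$ is transverse to $\Sigma_{x_0}$, so for some $\delta>0$ the map $\Phi(t,\xi):=G_t(x_0,\xi)$ is a diffeomorphism from $(-\delta,\delta)\times\Sigma_{x_0}$ onto a neighborhood $U$ of $\Sigma_{x_0}$ in $\Lambda_{x_0}$ and intertwines $G_s$ with translation in $t$. Write $\Phi^*(d\mc{H}^n_{x_0})=J_\Phi(t,\xi)\,dt\,d\vol_{\Sigma_{x_0}}$ with $J_\Phi$ smooth and strictly positive, and set
$$
\Phi^*(f\,d\mc{H}^n_{x_0})=F(t,\xi)\,dt\,d\vol_{\Sigma_{x_0}},\qquad F:=(f\circ\Phi)\,J_\Phi.
$$
$G_t$-invariance of $f\,d\mc{H}^n_{x_0}$ forces $F(t+s,\xi)=F(t,\xi)$ almost everywhere, so Fubini gives $F(t,\xi)=F_0(\xi)$ for a measurable $F_0\geq 0$. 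Define $f|_{\Sigma_{x_0}}(\xi):=F_0(\xi)/J_\Phi(0,\xi)$. Integrating in $t\in(-\delta,\delta)$ and using $\mu(\Lambda_{x_0})\leq 1$ yield $2\delta\int F_0\,d\vol_{\Sigma_{x_0}}\leq 1$, hence $f|_{\Sigma_{x_0}}\in L^1(\vol_{\Sigma_{x_0}})$.

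\textbf{Step 2 (return-map invariance).} The measure $F_0\,d\vol_{\Sigma_{x_0}}$---which differs from $\nu:=f|_{\Sigma_{x_0}}\,d\vol_{\Sigma_{x_0}}$ only by the smooth strictly positive factor $J_\Phi(0,\cdot)$---is $\eta_{x_0}$-invariant on $\mc{L}_{x_0}$. Indeed, partition a precompact Borel $A\subset\mc{L}_{x_0}$ into small pieces on which $T_{x_0}$ is nearly constant, apply $G_t$-invariance of $f\,d\mc{H}^n_{x_0}$ to thin flow-box slabs $\Phi((0,\epsilon)\times A')$ and their images near the exiting face over $\eta_{x_0}(A')$, and let $\epsilon\to 0$; the induced Jacobian on $\Sigma_{x_0}$ is precisely the one that appears in the Perron--Frobenius operator $U_{x_0}$ of~\eqref{e:pF}.

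\textbf{Step 3 (Poincar\'e recurrence).} $F_0\,d\vol_{\Sigma_{x_0}}$ is a finite, non-negative, $\eta_{x_0}$-invariant Borel measure on $\mc{L}_{x_0}$, so the classical Poincar\'e recurrence theorem yields that for almost every $\xi\in\mc{L}_{x_0}$ the forward $\eta_{x_0}$-orbit returns to every $\Sigma_{x_0}$-neighborhood of $\xi$ infinitely often, i.e.\ $\xi\in\bigcap_{T>0}\overline{\bigcup_{t>T}G_t(x_0,\xi)\cap\Sigma_{x_0}}$. Running the same argument for $\eta_{x_0}^{-1}$ (equivalently, for $G_{-t}$) covers the backward intersection in~\eqref{e:recurDef}, while points in $\Sigma_{x_0}\setminus\mc{L}_{x_0}$ never return and so, as wandering points in a finite invariant measure, carry zero $F_0$-mass. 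Combining, $F_0$---and hence $f|_{\Sigma_{x_0}}$---vanishes $\vol_{\Sigma_{x_0}}$-almost everywhere on $\Sigma_{x_0}\setminus\mc{R}_{x_0}$.
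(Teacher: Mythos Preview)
Your argument is correct, but it routes through the first-return map $\eta_{x_0}$ where the paper works directly with the continuous flow. The paper applies Poincar\'e recurrence to the system $(\Lambda_{x_0},\,f\,d\mc{H}^n_{x_0},\,G_t)$: for each ball $B(\xi_0,\e)\subset\Sigma_{x_0}$ it builds the flow-box $V=\bigcup_{|t|<2\delta}G_t(B(\xi_0,\e))$, gets $f\,d\mc{H}^n_{x_0}$-a.e.\ recurrence to $V$, transfers this to $\Sigma_{x_0}$ via Lemma~\ref{l:inv}, and then runs a countable-basis argument. Your approach instead establishes $\eta_{x_0}$-invariance of $F_0\,d\vol_{\Sigma_{x_0}}$ first (something the paper only proves later, in the proof of Corollary~\ref{c:invariant}) and applies discrete Poincar\'e recurrence on $\Sigma_{x_0}$. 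This buys you a conceptually cleaner endgame, but you pay in Step~2--3 with several technicalities the paper avoids: $\eta_{x_0}$ is only a map $\mc{L}_{x_0}\to\Sigma_{x_0}$, so you must first iterate the wandering-set argument to show $F_0$ is supported on the bi-infinite loop set $\bigcap_{n\in\Z}\eta_{x_0}^{-n}(\mc{L}_{x_0})$ before the discrete recurrence theorem applies; and to pass from $\eta_{x_0}$-recurrence to $\mc{R}_{x_0}$ in the sense of~\eqref{e:recurDef} you need the uniform lower bound $T_{x_0}\geq c>0$ coming from transversality of $H_p$ to $\Sigma_{x_0}$, so that the $n$-th return time tends to infinity. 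Both points are routine, but you should state them explicitly.
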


\begin{proof}
For $\xi_\red{0}\in \Sigma_{\red{x_0}}$ and $\e>0$ let $B(\xi_{\red{0}},\e)\subset \Sigma_{\red{x_0}}$ be the open ball of radius $\e$ and 
$$
V:=\bigcup_{-2\delta<t<2\delta} G_t(B(\xi_{\red{0}},\e)).
$$
Observe that by Theorem~\ref{thm:local} the triple $(\Lambda_{\red{x_0}}, fd\mc{H}^n_{\red{x_0}},G_t)$ forms a measure preserving dynamical system. The Poincar\'e recurrence  theorem \cite[Proposition 4.2.1, 4.2.2]{BrinStuck} implies that for $fd\mc{H}^n_{\red{x_0}}$ a.e. $(x,\xi)\in V$ there exists $t^{\pm}_n\to \pm\infty$ so that $G_{t^{\pm}_n}(x,\xi)\in V$. By the definition of $V$, there exists $s^{\pm}_n$ with $|s^{\pm}_n-t^{\pm}_n|<2\delta$ such that $G_{s^{\pm}_n}(x,\xi)\in B(\xi_{\red{0}},\e)$. In particular, for $fd\mc{H}^n_{\red{x_0}}$ a.e. $(x,\xi)\in V$, 
\begin{equation}
\label{e:recurs}
\bigcap_{T>0}\overline{\bigcup_{t\geq T}G_t(x,\xi)\cap B(\xi_{\red{0}},\e)}\neq \emptyset,\qquad\bigcap_{T>0}\overline{\bigcup_{t\geq T}G_{-t}(x,\xi)\cap B(\xi_{\red{0}},\e)}\neq \emptyset.
\end{equation}

Let 
$$
\mu_{\Sigma_{\red{x_0}}}:=f|_{\Sigma_{\red{x_0}}}|\nu(H_p)||_{\Sigma_{\red{x_0}}}d\vol_{\Sigma_{\red{x_0}}}.
$$
We next show that~\eqref{e:recurs} holds for $\mu_{\Sigma_{\red{x_0}}}$ a.e. point in $B(\xi_{\red{0}},\e)$.  To do so, suppose the opposite. Then there exists $A\subset B(\xi_{\red{0}},\e)$ with $\mu_{\Sigma_{\red{x_0}}}(A)>0$ so that for each $(x,\xi)\in A$, there exists $T>0$ with
\begin{equation}
\label{e:norecur}
\left(\left[\bigcup_{t\geq T}G_{t}(x,\xi)\right]\bigcup\left[ \bigcup_{t\geq T}G_{-t}(x,\xi)\right]\right)\bigcap B(\xi_{\red{0}},\e)=\emptyset.
\end{equation}
Let 
$$
A_\delta:=\bigcup_{t=-\delta}^\delta G_t(A).
$$
Then $A_\delta\subset V$ and for all $(x,\xi)\in A_\delta$, there exists $T>0$ so that \eqref{e:norecur} holds.
Moreover, invariance of $fd\mc{H}^n_{\red{x_0}}$ under $G_t$ together with  Lemma~\ref{l:inv} implies that
$$
(fd\mc{H}^n_{\red{x_0}})(A_\delta)=2\delta \mu_{\Sigma_{\red{x_0}}}(A)>0
$$
which contradicts~\eqref{e:recurs}. Thus~\eqref{e:recurs} holds for $\mu_{\Sigma_{\red{x_0}}}$ a.e. point in $B(\xi_{\red{0}},\e)$.  

Let $\{B(\xi_i,\e_i)\}$ be a countable basis for the topology on $\Sigma_{\red{x_0}}$. Then for each $i$, there is a subset of full measure, $\tilde{B}_i\subset B(\xi_i,\e_i)$ so that for every point of $\tilde{B}_i$ \eqref{e:recurs} holds with $\xi_{\red{0}}=\xi_i$, $\e=\e_i$. Noting that $X_i=\tilde{B}_i\cup (\Sigma_{\red{x_0}}\setminus B(\xi_i,\e_i))$ has full measure, we conclude that $\tilde{\Sigma}_{\red{x_0}}=\cap_i X_i\subset \mc{R}_{\red{x_0}}$ has full measure and thus, $\mu_{\Sigma_{\red{x_0}}}(\mc{R}_{\red{x_0}})=\mu_{\Sigma_{\red{x_0}}}(\Sigma_{\red{x_0}})$, finishing the proof of the lemma.
\end{proof}

\begin{proof}[Proof of Corollary~\ref{c:recur}]
Let $u$ solve $Pu=o_{L^2}(h)$. Then we can extract a subsequence with a defect measure $\mu$. By Lemma~\ref{l:suppRecur}, $\mu_{\red{x_0}}=\rho_{\red{x_0}}+ fd\mc{H}^n_{\red{x_0}}$ with $\rho_{\red{x_0}}\perp \mc{H}^n_{\red{x_0}}$ and $\supp f|_{\Sigma_{\red{x_0}}}\subset \mc{R}_{\red{x_0}}$. Now, if $\vol_{\Sigma_{\red{x_0}}}(\mc{R}_{\red{x_0}})=0$, 
$$
\int_{\Sigma_{\red{x_0}}}\sqrt{f} d\vol_{\Sigma_{\red{x_0}}}=0.
$$
Plugging this into Theorem~\ref{thm:local} proves the corollary{.}
\end{proof}

\begin{proof}[Proof of Corollary~\ref{c:invariant}]
Let $u$ solve $Pu=o_{L^2}(h)$. Then we can extract a subsequence with a defect measure $\mu$. By Lemma~\ref{l:suppRecur} and Theorem~\ref{thm:local}, $\mu_{\red{x_0}}=\rho_{\red{x_0}}+ fd\mc{H}^n_{\red{x_0}}$ where $\rho_{\red{x_0}}\perp \mc{H}^n_{\red{x_0}}$, $\supp f|_{\Sigma_{\red{x_0}}}\subset \mc{R}_{\red{x_0}}$, and $fd\mc{H}^n_{\red{x_0}}$ is $G_t$ invariant.

Let $T_{\red{x_0}}$ be as in~\eqref{e:returnTime}. Fix $T<\infty$ and suppose 
$$
A\subset \Omega_T:=\{\eta\in \Sigma_{\red{x_0}}\mid T_{\red{x_0}}(\eta)\leq T\}.
$$ 
Write $(0,T]=\bigsqcup_{i=1}^{N(\e)}(T_i-\e,T_i+\e]$ and 
$$
\Omega_T=\bigsqcup_{i=1}^{N(\e)}{\Omega^\e_i},\qquad {\Omega^\e_i:=}T_{\red{x_0}}^{-1}( (T_i-\e,T_i+\e]).
$$
Then, by Lemma~\ref{l:inv} {(using that in the case of $-h^2\Delta_g-1$, $|\nu(H_p)|\equiv 2$)} for any $0<\delta$ small enough
\begin{align*}
\int {2}1_A fd\vol_{\Sigma_{\red{x_0}}}&= \frac{1}{2\delta}\int  1_{\bigcup_{-\delta}^\delta G_t(A)}fd\mc{H}^n_{\red{x_0}}\,{=}\,\frac{1}{2\delta}\sum_i\int  1_{\bigcup_{-\delta}^\delta G_t(A\cap \Omega^\e_i)}fd\mc{H}^n_{\red{x_0}}.
\end{align*}
{Next, using invariance of $fd\mc{H}^n_{\red{x_0}}$ under $G_t$, we have}
\begin{align*}
\frac{1}{2\delta}\sum_i\int  1_{\bigcup_{-\delta}^\delta G_t(A\cap \Omega^\e_i)}fd\mc{H}^n_{\red{x_0}}&=\sum_i \frac{1}{2\delta}\int 1_{\bigcup_{{T}_i-\delta}^{{T}_i+\delta} G_t(A\cap \Omega^\e_i)}f d\mc{H}^n_{\red{x_0}}
\end{align*}
{Then, by the definition of $\Omega^\e_i$, for $q\in \Omega^\e_i$, $|T_{\red{x_0}}(q)-T_i|<\e$ and
$$
\sum_i1_{\bigcup_{{T}_i-\delta}^{{T}_i+\delta} G_t(A\cap \Omega^\e_i)}\underset{\e \to 0}{\longrightarrow} 1_{\bigcup_{-\delta}^\delta G_t(\eta_x(A))} \qquad fd\mc{H}^n_{\red{x_0}} \text{ a.e.}
$$
 In particular, by the dominated convergence theorem}
\begin{align*}
\lim_{\e\to 0}\sum_i \frac{1}{2\delta}\int 1_{\bigcup_{{T}_i-\delta}^{{T}_i+\delta} G_t(A\cap \Omega^{\red{\e}}_i)}f d\mc{H}^n_{\red{x_0}}
&=\frac{1}{2\delta}\int 1_{\bigcup_{-\delta}^{\delta} G_t(\eta_x(A))}f d\mc{H}^n_{\red{x_0}}
\end{align*}
So, sending $\delta\to 0$ gives
$$
{2}\int 1_A fd\vol_{\Sigma_{\red{x_0}}}={2}\int 1_{\eta_x(A)} fd\vol_{\Sigma_{\red{x_0}}}
$$
for all $A\subset \Omega_T$ measurable. Taking $T\to \infty$ then proves this for all $A\subset \mc{L}_{\red{x_0}}$ measurable. In particular, changing variables, using that $\supp f\subset \mc{R}_{\red{x_0}}\subset \mc{L}_{\red{x_0}}$, and writing $J_{\red{x_0}}(\xi)$ as in~\eqref{e:jacobian}
$$
f(\xi)d\vol_{\Sigma_{\red{x_0}}}(\xi)=f(\eta_{\red{x_0}}(\xi))\cdot J_ {\red{x_0}}(\xi)d\vol_{\Sigma_{\red{x_0}}}(\xi)
$$
which implies $U_{\red{x_0}} \sqrt{f}=\sqrt{f}$ where $U_{\red{x_0}}$ is defined in~\eqref{e:pF}. Observe that since $\red{x_0}$ is dissipative and $\sqrt{f}\in L^2(\mc{R}_{\red{x_0}},d\vol_{\Sigma_{\red{x_0}}})$, \eqref{e:dissipative} implies $\sqrt{f}=0$. Theorem~\ref{thm:local} then completes the proof.
\end{proof}

\subsection{Spectral cluster estimates for $-\Delta_g$}
Let $(M,g)$ be a smooth, compact, boundaryless Riemannian manifold of dimension $n$, $p=|\xi|_g^2-1$, $G_t=\exp(tH_p)$ and
$$
A_x:=\frac{C_n}{2}\left(\frac{\vol_{\Sigma_{\red{x_0}}}(\mc{R}_{\red{x_0}})}{\inf_{\xi\in \mc{R}_{\red{x_0}}}T_{\red{x_0}}(\xi)}\right)^{1/2}
$$
where $T_{\red{x_0}}$ is as in~\eqref{e:returnTime} and $C_n$ is the constant in Theorem~\ref{thm:local}. We consider an orthonormal basis $\{u_{\lambda_j}\}_{j=1}^\infty$ of eigenfunctions of $-\Delta_g$ (i.e. solving~\eqref{e:eigenfunction}) and let 
$$
\Pi_{[\lambda,\lambda+\delta]}:=1_{[\lambda,\lambda+\delta]}(\sqrt{-\Delta_g}).
$$

\begin{cor}
\label{l:volcano}
For all $\e>0$, $\red{x_0}\in M$, there exists $\delta=\delta(\red{x_0},\e)>0$, a neighborhood $\mc{N}(\red{x_0},\e)$ of $\red{x_0}$, and $\lambda_0=\lambda_0(\red{x_0},\e)>0$ so that for $\lambda>\lambda_0$, 
\begin{equation}
\label{e:volcano}
\|\Pi_{[\lambda,\lambda+\delta]}\|_{L^2{(M)}\to L^\infty(\mc{N}(\red{x_0},\e))}^2=\sup_{y\in \mc{N}(\red{x_0},\e)}\sum_{\lambda_j\in[\lambda,\lambda+\delta]}|u_{\lambda_j}(y)|^2\leq (A_{\red{x_0}}^2+\e)\lambda^{n-1}.
\end{equation}
\end{cor}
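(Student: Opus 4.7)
The plan is to argue by contradiction: if \eqref{e:volcano} fails for some $\e>0$ and $x_0\in M$, I would extract an extremal quasimode violating the bound and apply Theorem~\ref{thm:local} to its defect measure. Negating the conclusion yields, for each $k\in\N$, a scale $\lambda_k>k$, a width $\delta_k:=1/k$, and a point $y_k\in B(x_0,1/k)$ with
$$\sum_{\lambda_j\in[\lambda_k,\lambda_k+\delta_k]}|u_{\lambda_j}(y_k)|^2>(A_{x_0}^2+\e)\lambda_k^{n-1},$$
so in particular $y_k\to x_0$ and $\delta_k\to 0$. Since the functional $f\mapsto\Pi_{[\lambda,\lambda+\delta]}f(y_k)$ from $L^2$ to $\C$ is realized by Cauchy--Schwarz against the reproducing kernel at $y_k$, its norm squared equals $\sum_j|u_{\lambda_j}(y_k)|^2$, attained by the normalized extremizer
$$v_k:=\frac{1}{N_k}\sum_{\lambda_j\in[\lambda_k,\lambda_k+\delta_k]}\overline{u_{\lambda_j}(y_k)}\,u_{\lambda_j},\qquad N_k:=\Big(\sum_j|u_{\lambda_j}(y_k)|^2\Big)^{1/2}.$$
Then $\|v_k\|_{L^2}=1$, $v_k(y_k)=N_k$, and with $h_k:=\lambda_k^{-1}$ one has $h_k^{(n-1)/2}|v_k(y_k)|>(A_{x_0}^2+\e)^{1/2}$.

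Because $v_k$ is spectrally supported in $[\lambda_k,\lambda_k+\delta_k]$, a direct computation gives $(-h_k^2\Delta_g-1)v_k=O_{L^2}(\delta_k h_k)=o_{L^2}(h_k)$ (using $\delta_k\to 0$), and the elliptic parametrix further shows $v_k$ is compactly microlocalized. Thus $v_k$ is a genuine quasimode for $P=-h^2\Delta_g-1$ in the sense required by Theorem~\ref{thm:local}. Extracting a subsequence with defect measure $\mu$ and writing $\mu_{x_0}=\rho_{x_0}+fd\mc{H}^n_{x_0}$, Theorem~\ref{thm:local} together with the identities $|\nu(H_p)|=|\partial_\xi p|_g=2$ on $\Sigma$ yields
$$(A_{x_0}^2+\e)^{1/2}\leq\limsup_k h_k^{(n-1)/2}|v_k(y_k)|\leq C_n\int_{\Sigma_{x_0}}\sqrt{f}\,d\vol_{\Sigma_{x_0}}.$$

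To conclude, I would estimate the right-hand side by $A_{x_0}$. By Lemma~\ref{l:suppRecur}, $\supp f|_{\Sigma_{x_0}}\subseteq\mc{R}_{x_0}$, so Cauchy--Schwarz gives
$$\int_{\Sigma_{x_0}}\sqrt{f}\,d\vol_{\Sigma_{x_0}}\leq\vol_{\Sigma_{x_0}}(\mc{R}_{x_0})^{1/2}\Big(\int_{\mc{R}_{x_0}}f\,d\vol_{\Sigma_{x_0}}\Big)^{1/2}.$$
For the remaining $L^1$ factor, $G_t$-invariance of $fd\mc{H}^n_{x_0}$ and Lemma~\ref{l:inv} allow an injective parametrization of a subset of $\Lambda_{x_0}$ by $\mc{R}_{x_0}\times[0,T_0)$, with Jacobian $|\nu(H_p)|$, for any $T_0<T_{\min}:=\inf_{\xi\in\mc{R}_{x_0}}T_{x_0}(\xi)$. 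Integrating $f$ over the resulting slab and using that $\int_{\Lambda_{x_0}}f\,d\mc{H}^n_{x_0}\leq\mu(T^*M)=1$ bounds $\int_{\mc{R}_{x_0}}f\,d\vol_{\Sigma_{x_0}}$ by a constant multiple of $1/T_{\min}$. Combining with Cauchy--Schwarz and the definition of $A_{x_0}$ then contradicts the lower bound on $h_k^{(n-1)/2}|v_k(y_k)|$.

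The principal obstacle is the careful bookkeeping at this last step: the $|\nu(H_p)|=2$ Jacobian, the square-root loss in Cauchy--Schwarz, and the injectivity window for the first-return parametrization must combine to produce precisely the coefficient $(C_n/2)^2\vol_{\Sigma_{x_0}}(\mc{R}_{x_0})/T_{\min}$ in $A_{x_0}^2$. In particular, one should use a fundamental domain for $\eta_{x_0}$ inside $\mc{R}_{x_0}$ to avoid multi-counting orbit segments when converting the $\mc{H}^n$-integral on the flowout into a $\vol_{\Sigma_{x_0}}$-integral; everything else is a direct consequence of Theorem~\ref{thm:local} and Lemma~\ref{l:suppRecur}.
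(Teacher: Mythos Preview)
Your proposal is correct and follows essentially the same route as the paper: argue by contradiction, extract a sequence of extremal quasimodes from the spectral projector, pass to a defect measure, and combine Theorem~\ref{thm:local} (the paper routes through Corollary~\ref{c:lessLocal}) with Lemma~\ref{l:suppRecur} and Cauchy--Schwarz to bound $C_n\int_{\Sigma_{x_0}}\sqrt{f}\,d\vol_{\Sigma_{x_0}}$ by $A_{x_0}$. The only cosmetic differences are that you write down the explicit reproducing-kernel extremizer (the paper simply picks $w_l$ with $\|w_l\|_{L^2}=1$ and sets $v_l=\Pi_{[\lambda_l,\lambda_l+l^{-1}]}w_l$), and that the paper handles the final Jacobian step by integrating $f\,d\mc{H}^n_{x_0}$ over the slab $\Lambda_{x_0,\inf_{\mc{R}_{x_0}}T_{x_0}}$ directly via Lemma~\ref{l:inv} rather than invoking a fundamental domain for $\eta_{x_0}$.
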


Note that since $G_t|_{S^*M}$ parametrizes the speed 2 geodesic flow and therefore
\begin{gather*}
\inf_{\xi\in \mc{R}_{\red{x_0}}}T_{\red{x_0}}(\xi)\geq \frac{1}{2}L(\red{x_0},M)\geq \inj(M),\\
L(\red{x_0},M):=\inf\{t>0\mid \text{ there exists a geodesic of length }t\text{ starting and ending at }\red{x_0}\},
\end{gather*}
and $\inj(M)$ denotes the injectivity radius of $M$.
Therefore, we could replace $A_{\red{x_0}}$ in~\eqref{e:volcano} by either of
$$
A'_{\red{x_0}}=C_n\left(\frac{\vol_{\Sigma_{\red{x_0}}}(\mc{R}_{\red{x_0}})}{2\cdot L(\red{x_0},M)}\right)^{1/2},\qquad A''_{\red{x_0}}=C_n\left(\frac{\vol_{\Sigma_{\red{x_0}}}(\mc{R}_{\red{x_0}})}{4\cdot \inj(M)}\right)^{1/2}.
$$
to obtain a weaker, but more easily understood statement. Corollary~\ref{l:volcano} is closely related to the work of Donnelly~\cite{Donnelly} and gives explicit dependence of the constant in the H\"ormander bound in terms of geometric quantities.

\begin{proof}
{We start from the fact that for $U\subset M$}
\begin{equation}
\label{e:squid}
\|\Pi_{[\lambda,\lambda+\delta]}\|_{L^2(M)\to L^\infty(U)}^2=\sup_{x\in U}\sum_{\lambda_j\in [\lambda,\lambda+\delta]}|u_{\lambda_j}(x)|^2.
\end{equation}

For $w\in L^2(M)$,
\begin{equation}
\label{e:quasi1}
\|(-\Delta_g-\lambda^2)\Pi_{[\lambda,\lambda+\delta]}w\|_{L^2}\leq 2\lambda\delta \|\Pi_{[\lambda,\lambda+\delta]}w\|_{L^2}.
\end{equation}

Suppose that for some $\e>0$ no $\delta$, $\mc{N}(\red{x_0})$, and $\lambda_0$ exist so that~\eqref{e:volcano} holds. Then for all $\delta>0$, $r>0$, 
$$
\limsup_{\lambda \to \infty}\lambda^{\frac{1-n}{2}}\|\Pi_{[\lambda,\lambda+\delta]}\|_{L^2(M)\to L^\infty(B(\red{x_0},r))}> A_{\red{x_0}}+\e.
$$
Therefore, for all ${0<m\in \mathbb{Z}}$, there exists $\lambda_{k,{m}}\uparrow \infty$ so that 
\begin{equation}
\label{e:gecko}
\lambda_{k,{m}}^{\frac{1-n}{2}}\|\Pi_{[\lambda_{k,{m}},\lambda_{k,{m}}+{m^{-1}}]}\|_{L^2(M)\to L^\infty(B(\red{x_0},r))}> A_{\red{x_0}}+\e.
\end{equation}
Moreover, we may assume that for ${m_1<m_2}$, $\lambda_{k,{m_2}}>\lambda_{k,{m_1}}$. {Indeed, assume we have chosen such $\lambda_{k,m}$ for $m<M$. Then there exists $\lambda_{k,M}> \max(\lambda_{k,M-1},\lambda_{k-1,M})$ so that~\eqref{e:gecko} holds with $m=M$. By convention, we let $\lambda_{-1,m}=0$.}   Now, for ${m_1\leq m_2}$,
$$
\|\Pi_{[\lambda,\lambda+{m_2^{-1}}]}\|_{L^2(M)\to L^\infty(B(\red{x_0},r))}\leq \|\Pi_{[\lambda,\lambda+{m_1^{-1}}]}\|_{L^2(M)\to L^\infty(B(\red{x_0},r))},
$$
letting $\lambda_l=\lambda_{l,{l}}$, $\lambda_l\to \infty$ and 
$$
\lambda_l^{\frac{1-n}{2}}\|\Pi_{[\lambda_{l},\lambda_{l}+l^{-1}]}\|_{L^2(M)\to L^\infty(B(\red{x_0},r))}> A_{\red{x_0}}+\e.
$$

By~\eqref{e:quasi1} for $w\in L^2(M)$
$$
\|(-\lambda_l^{-2}\Delta_g-1)\Pi_{[\lambda_l,\lambda_l+l^{-1}]}w\|_{L^2\to L^2}=o(\lambda_l^{-1})\|\Pi_{[\lambda_l,\lambda_l+l^{-1}]}w\|_{L^2\to L^2}.
$$
Fix $w_l\in L^2(M)$ with $\|w_l\|_{L^2}=1$, so that 
$$
\lambda_l^{\frac{1-n}{2}}\|v_l\|_{L^\infty(B(\red{x_0},r))}> A_{\red{x_0}}+\e,\qquad v_l:=\Pi_{[\lambda_l,\lambda_l+l^{-1}]}w_l.
$$
Then extracting a further subsequence if necessary, we may assume that $v_l$ has defect measure $\mu$ with $\mu_{\red{x_0}}=\rho_{\red{x_0}}+fd\mc{H}^n_{\red{x_0}}$ and hence that Corollary~\ref{c:lessLocal} applies to $v_l$. Furthermore, since $\|v_l\|_{L^2}\leq \|w_l\|_{L^2}=1$, 
\begin{equation}
\label{e:spider}
\int_{\Lambda_{\red{x_0}}} fd\mc{H}^n_{\red{x_0}}\leq 1.
\end{equation}

By computing in normal geodesic coordinates at $\red{x_0}$, observe that for $p=|\xi|^2_g-1$, $|\nu(H_p)|=|\partial_{\xi} p|_g=2$. Thus, Corollary~\ref{c:lessLocal}, implies the existence of $r>0$ small enough so that 
\begin{align}
\label{e:ant}
A_{\red{x_0}}+\e\leq \limsup_{l\to \infty}\lambda_l^{\frac{1-n}{2}}\|v_l\|_{L^\infty(B(\red{x_0},r))}&\leq  C_n\int_{\Sigma_{\red{x_0}}}\sqrt{f}d\vol_{\Sigma_{\red{x_0}}}
\end{align} 
Finally, by Lemma~\ref{l:suppRecur} and \eqref{e:spider}, $\supp f\subset \mc{R}_{\red{x_0}}$ and $\|f\|_{L^1(\Lambda_{\red{x_0}},\mc{H}^n_{\red{x_0}})}\leq 1$. Therefore,
\begin{align*}
C_n\int_{\Sigma_{\red{x_0}}}\sqrt{f}d\vol_{\Sigma_{\red{x_0}}}&\leq C_n\left( \frac{1}{2}\int_{\Sigma_{\red{x_0}}}f|\nu(H_p)|d\vol_{\Sigma_{\red{x_0}}}\right)^{1/2}\big(\vol_{\Sigma_{\red{x_0}}}(\mc{R}_{\red{x_0}}\big)\big)^{1/2}\\
&= C_n\left( \frac{1}{4\cdot\inf_{\xi\in \mc{R}_{\red{x_0}}}(T_{\red{x_0}}(\xi))}\int_{\Lambda_{\red{x_0},\inf_{\mc{R}_{\red{x_0}}}T_{\red{x_0}}(\xi)}}fd\mc{H}^n_{\red{x_0}}\right)^{1/2}\big(\vol_{\Sigma_{\red{x_0}}}(\mc{R}_{\red{x_0}}\big)\big)^{1/2}\\
&\leq \frac{C_n}{2}\left(\frac{{\vol_{\Sigma_{\red{x_0}}}(\mc{R}_{\red{x_0}})}}{\inf_{\xi\in \mc{R}_{\red{x_0}}}(T_{\red{x_0}}(\xi))}\right)^{1/2}=A_{\red{x_0}},
\end{align*}
contradicting \eqref{e:ant}.
\end{proof}

Compactness of $M$, the fact that $\vol_{\Sigma_{\red{x_0}}}(\mc{R}_{\red{x_0}})\leq \vol(S^{n-1})$, and Corollary~\ref{l:volcano} imply Corollary~\ref{c:geomDep}.

\section{Dynamical and measure theoretic preliminaries}

\subsection{Dynamical preliminaries}

The following lemma gives an estimate on how much spreading the geodesic flow has near a point.

\begin{lem}
\label{l:flow}
Fix $\red{x_0}\in M$. Then there exists $\delta_{\red{M,p}}>0$ small enough and $C_1>0$ so that uniformly for $t\in [-\delta_{\red{M,p}},\delta_{\red{M,p}} ]$,
\begin{multline}
\label{e:flow-1} 
\frac{1}{2}d\big((\red{x_0},\xi_1),(\red{x_0},\xi_2)\big){-C_1}d\big((\red{x_0},\xi_1),(\red{x_0},\xi_2)\big)^2\leq d\big(G_t(\red{x_0},\xi_2),G_t(\red{x_0},\xi_1)\big)\\
\leq 2d\big((\red{x_0},\xi_1),(\red{x_0},\xi_2)\big){+C_1}d\big((\red{x_0},\xi_1),(\red{x_0},\xi_2)\big)^2
\end{multline}
{where $d$ is the distance induced by the Sasaki metric.}
Furthermore if $G_t(\red{x_0},\xi_i)=(x_i(t),\xi_i(t))$,
\begin{equation}
\label{e:flow-2} 
d_{{M}}(x_1(t),x_2(t)){\leq C_1}d\big((\red{x_0},\xi_1),(\red{x_0},\xi_2)\big)\delta_{\red{M,p}}
\end{equation}
{where $d_M$ is the distance induce by the metric $M$.}
\end{lem}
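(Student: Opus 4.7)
The plan is to derive both estimates from the smoothness of the bicharacteristic flow $G_t$ and a Taylor expansion around $t=0$ in the initial data. By Remark~\ref{r:compact}, we may assume $\Sigma$ is compact, so the Hamilton vector field $H_p$ is smooth on a neighborhood of the relevant region and estimates are uniform on compact sets. Working in a local trivialization of $T^*M$ near $(x_0,\xi_0)$, the Sasaki distance $d$ is bi-Lipschitz equivalent to the coordinate Euclidean norm on any bounded set, so it suffices to prove the estimates in coordinates and then translate.

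For~\eqref{e:flow-1}, smoothness of $G_t$ in the initial point gives the first-order expansion
\[
G_t(q_1)-G_t(q_2)=dG_t|_{q_2}(q_1-q_2)+R_t(q_1,q_2),
\]
where joint smoothness of $G_t$ in $(t,q)$ yields $|R_t(q_1,q_2)|\leq C|q_1-q_2|^2$ uniformly in $|t|\leq\delta$ for any fixed $\delta$. Since $G_0=\mathrm{Id}$ we have $dG_0=\mathrm{Id}$, and by continuity of $t\mapsto dG_t$ we may choose $\delta_{M,p}>0$ small enough that $\|dG_t|_q-\mathrm{Id}\|\leq 1/2$ for all $|t|\leq\delta_{M,p}$ and $q$ in the compact set at hand. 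This immediately gives the upper bound $|G_t(q_1)-G_t(q_2)|\leq 2|q_1-q_2|+C_1|q_1-q_2|^2$. For the lower bound, the same estimate on $dG_t-\mathrm{Id}$ makes $dG_t$ invertible with $\|(dG_t)^{-1}\|\leq 2$, hence $|dG_t(v)|\geq \tfrac{1}{2}|v|$, yielding $|G_t(q_1)-G_t(q_2)|\geq\tfrac12 |q_1-q_2|-C_1|q_1-q_2|^2$. Translating back to Sasaki distances gives~\eqref{e:flow-1}.

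For~\eqref{e:flow-2}, the key observation is that both curves start at the same base point: $x_1(0)=x_2(0)=x_0$. In a local chart,
\[
x_i(t)-x_0=\int_0^t \partial_\xi p\bigl(G_s(x_0,\xi_i)\bigr)\,ds,
\]
so
\[
x_1(t)-x_2(t)=\int_0^t\!\bigl[\partial_\xi p(G_s(x_0,\xi_1))-\partial_\xi p(G_s(x_0,\xi_2))\bigr]\,ds.
\]
Since $\partial_\xi p$ is smooth and $G_s$ depends smoothly on initial data, the integrand is bounded by $C|\xi_1-\xi_2|$ uniformly for $|s|\leq\delta_{M,p}$. Hence $|x_1(t)-x_2(t)|\leq C|t|\,|\xi_1-\xi_2|\leq C_1\delta_{M,p}\,d((x_0,\xi_1),(x_0,\xi_2))$, using equivalence of norms.

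The only real subtleties are bookkeeping — ensuring all constants are uniform, which follows from compactness of the region where we need the estimates, and converting between the coordinate Euclidean norm and the Sasaki distance; no dynamical input is needed. I expect the (mild) obstacle to be formulating the expansions cleanly enough that both the factor $2$ (respectively $1/2$) and the quadratic remainder appear with explicit, coordinate-independent constants.
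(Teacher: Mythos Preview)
Your proposal is correct and follows essentially the same approach as the paper: both arguments Taylor-expand the flow $G_t$ in the initial data and use that the linearization at $t=0$ is the identity. The paper packages both estimates into the single computation $d_\xi G_t(x_0,\xi)=(0,I)+O(t)$, from which \eqref{e:flow-1} and \eqref{e:flow-2} are read off simultaneously, whereas you treat them separately (continuity of $dG_t$ for \eqref{e:flow-1}, the integral formula for the base projection for \eqref{e:flow-2}); the content is the same.
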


\begin{proof}
By Taylor's theorem
$$
G_t(\red{x_0},\xi_1)-G_t(\red{x_0},\xi_2)=d_\xi G_t(\red{x_0},\xi_2)(\xi_1-\xi_2)+O_{C^\infty}(\sup_{q\in \Sigma} |d_\xi ^2G_t(q)|(\xi_1-\xi_2)^2)
$$
Now,
$$
G_t(\red{x_0},\xi)=(\red{x_0},\xi)+(\partial_\xi p(\red{x_0},\xi) t,-\partial_xp(\red{x_0},\xi) t)+O(t^2)
$$
so
$$
d_\xi G_t(\red{x_0},\xi)=(0,I)+t(\partial^2_\xi p ,-\partial^2_{\xi x}p )+O(t^2)
$$
In particular,
$$
G_t(\red{x_0},\xi_1)-G_t(\red{x_0},\xi_2)=((0,I)+O(t))(\xi_1-\xi_2)+O((\xi_1-\xi_2)^2)
$$
and choosing $\delta_{\red{M,p}}>0$ small enough gives the result.
\end{proof}

\subsection{Measure theoretic preliminaries}

We will need a few measure theoretic lemmas to prove our main theorem.

\begin{lem}
\label{l:invariance}
Suppose that $\mu_{\red{x_0}}=\rho_{\red{x_0}}+fd\mc{H}^n_{\red{x_0}}$ is a finite Borel measure invariant under $G_t$ and $\rho_{\red{x_0}}\perp \mc{H}^n_{\red{x_0}}$. Then $\rho_{\red{x_0}}$ and $fd\mc{H}^n_{\red{x_0}}$ are invariant under $G_t$. 
\end{lem}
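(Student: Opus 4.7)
The plan is to invoke uniqueness of the Lebesgue decomposition with respect to the reference measure $\mc{H}^n_{x_0}$. Since $\rho_{x_0}\perp \mc{H}^n_{x_0}$, the expression $\mu_{x_0}=\rho_{x_0}+f\,d\mc{H}^n_{x_0}$ is the unique Lebesgue decomposition of $\mu_{x_0}$ relative to $\mc{H}^n_{x_0}$. If I can show that $(G_t)_*\rho_{x_0}$ is again singular with respect to $\mc{H}^n_{x_0}$ and that $(G_t)_*(f\,d\mc{H}^n_{x_0})$ is again absolutely continuous with respect to $\mc{H}^n_{x_0}$, then combining with the hypothesized invariance $(G_t)_*\mu_{x_0}=\mu_{x_0}$ will force the two components to be individually invariant.

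First I would check that $\Lambda_{x_0}$ is itself $G_t$-invariant; this is immediate from its definition as a flowout, since $G_s\Lambda_{x_0}=G_s\bigcup_{t\in\R}G_t(\Sigma_{x_0})=\bigcup_{t\in\R}G_{s+t}(\Sigma_{x_0})=\Lambda_{x_0}$. Consequently, if $\nu$ is a Borel measure on $\Lambda_{x_0}$ then so is $(G_t)_*\nu$, and it makes sense to compare it with $\mc{H}^n_{x_0}$.

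The real content is to verify that $G_t$ preserves the $\mc{H}^n$-null ideal on $\Lambda_{x_0}$: that is, $\mc{H}^n(E)=0$ iff $\mc{H}^n(G_t(E))=0$ for $E\subset\Lambda_{x_0}$. Using the reduction of Remark~\ref{r:compact} I may assume $\Sigma$ is compact, so $G_t$ is Lipschitz on a compact neighborhood of $\Sigma$; the standard fact that Lipschitz maps send $\mc{H}^n$-null sets to $\mc{H}^n$-null sets, applied to both $G_t$ and $G_{-t}$, gives the claim. From this, if $\rho_{x_0}$ is concentrated on a null set $N$, then $(G_t)_*\rho_{x_0}$ is concentrated on the null set $G_t(N)$, so $(G_t)_*\rho_{x_0}\perp \mc{H}^n_{x_0}$; and if $\mc{H}^n(A)=0$, then $\mc{H}^n(G_{-t}(A))=0$, so $(G_t)_*(f\,d\mc{H}^n_{x_0})(A)=\int_{G_{-t}(A)}f\,d\mc{H}^n_{x_0}=0$, giving absolute continuity of the pushforward of the density part.

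Putting these together, the identity $(G_t)_*\rho_{x_0}+(G_t)_*(f\,d\mc{H}^n_{x_0})=(G_t)_*\mu_{x_0}=\mu_{x_0}=\rho_{x_0}+f\,d\mc{H}^n_{x_0}$ displays two Lebesgue decompositions of the same finite measure against $\mc{H}^n_{x_0}$; uniqueness then yields $(G_t)_*\rho_{x_0}=\rho_{x_0}$ and $(G_t)_*(f\,d\mc{H}^n_{x_0})=f\,d\mc{H}^n_{x_0}$. The only step that deserves real attention is the Lipschitz/measure-class statement for $G_t$ on $\Lambda_{x_0}$, and this is cleanly handled by the compactness reduction of Remark~\ref{r:compact}.
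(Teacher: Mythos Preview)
Your proof is correct and follows essentially the same line as the paper's: both hinge on the fact that the diffeomorphism $G_t$ preserves $\mc{H}^n$-null sets, so that the singular and absolutely continuous parts are respected by pushforward. The paper verifies invariance of $\rho_{x_0}$ by hand using the partition $\Lambda_{x_0}=N\sqcup P$ and then deduces invariance of $f\,d\mc{H}^n_{x_0}$ from~\eqref{e:inv}, whereas you invoke uniqueness of the Lebesgue decomposition directly; your packaging is slightly cleaner but the content is the same.
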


\begin{proof}
Since $\rho_{\red{x_0}}\perp \mc{H}^n_{\red{x_0}}$, there exist disjoint $N, P$ such that $\rho_{\red{x_0}}(P)=\mc{H}^n_{\red{x_0}}(N)=0$ and $\Lambda_{\red{x_0}}=N\cup P$. Suppose $A$ is Borel. Then the invariance of $\mu_{\red{x_0}}$ implies 
\begin{equation}
\label{e:inv}
\int(1_{A}\circ G_{-t}-1_{A})d\rho_{\red{x_0}} =\int (1_{A}-1_{A}\circ G_{-t})fd\mc{H}^n_{\red{x_0}}.
\end{equation}
Now, if $A\subset N$ then the fact that $G_t$ is a diffeomorphism implies {that it maps $0$ Hausdorff measure sets to $0$ Hausdorff measure sets and hence }$\mc{H}^n_{\red{x_0}}(A)=\mc{H}^n_{\red{x_0}}(G_t(A))=0$.
Therefore, 
\begin{equation}
\label{e:subN}\rho_{\red{x_0}}(A) =\rho_{\red{x_0}}(G_t(A)),\qquad A\subset N
\end{equation}
In particular,
 $$
 \rho_{\red{x_0}}(N)=\rho_{\red{x_0}}(G_t(N))=\rho_{\red{x_0}}(\Lambda_{\red{x_0}}).
 $$ 
Using again that for $t\in \R,$ $G_t:\Sigma\to \Sigma$ is a diffeomorphism, we have 
$$
\rho_{\red{x_0}}(G_t(P))=\rho_{\red{x_0}}(\Lambda_{\red{x_0}}\setminus G_t(N))=\rho_{\red{x_0}}(\Lambda_{\red{x_0}})-\rho_{\red{x_0}}(G_t(N))=0.
$$
So, in particular, 
\begin{equation}
\label{e:subP}
\rho_{\red{x_0}}(G_t(A))=0,\qquad A\subset P.\
\end{equation} 
Combining~\eqref{e:subN} with~\eqref{e:subP} proves that $\rho_{\red{x_0}}$ is $G_t$ invariant and hence~\eqref{e:inv} proves the lemma.
\end{proof}

Let $B(\xi,r)\subset \Sigma_{\red{x_0}}$ be the {ball of radius $r$ around $\xi$ for the distance induced by the Sasaki metric on $\Sigma_{\red{x_0}}$} and define 
\begin{equation}
\label{e:tubes}
T_{\red{\delta}}(\xi,r):=\bigcup_{t=-\red{\delta}}^{\red{\delta}} G_t(\{(\red{x_0},\xi_0)\mid \xi_0\in B(\xi,r)\}).
\end{equation}

\begin{lem}
\label{l:tubes}
Suppose $\red{\delta>0}$ and $ \rho_{\red{x_0}}$ is a finite measure invariant under $G_t$ and $\rho_{\red{x_0}}\perp \mc{H}^{n}_{\red{x_0}}$. Then for all $\e>0$, there exist $\xi_j\in \Sigma_{\red{x_0}}$ and $r_j>0$, $j=1,\dots$ so that 
\begin{gather}
\label{e:mucond}
\sum r_j^{n-1}<\e,\qquad \rho_{\red{x_0}}\left(\bigcup_j T_{\red{\delta}}(\xi_j,r_j)\right)=\rho_{\red{x_0}}(\Lambda_{\red{x_0},\red{\delta}}).
\end{gather}
\end{lem}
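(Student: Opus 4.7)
The strategy is to exploit the interplay between the $G_t$-invariance of $\rho_{\red{x_0}}$ and its singularity to $\mc{H}^n_{\red{x_0}}$ in order to push $\rho_{\red{x_0}}|_{\Lambda_{\red{x_0},\delta}}$ down to a measure $\nu$ on the $(n-1)$-dimensional manifold $\Sigma_{\red{x_0}}$ that is singular to $\vol_{\Sigma_{\red{x_0}}}$. Once we have such a $\nu$, the required covering by tubes follows from the definition of Hausdorff $(n-1)$-measure on $\Sigma_{\red{x_0}}$.

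First I would construct $\nu$. By assumption $\partial_\xi p\neq 0$ on $\Sigma$, so $H_p$ is transverse to $\Sigma_{\red{x_0}}\subset T^*_{\red{x_0}}M$, and Lemma~\ref{l:flow} furnishes $\delta_0\in(0,\delta]$ small enough that the flow map
$$
\phi_0:\Sigma_{\red{x_0}}\times[-\delta_0,\delta_0]\to \Sigma,\qquad \phi_0(\xi,t):=G_t(\red{x_0},\xi)
$$
is a diffeomorphism onto its image $W_0$. Define a Borel measure $\nu$ on $\Sigma_{\red{x_0}}$ by
$$
\nu(B):=\tfrac{1}{2\delta_0}\rho_{\red{x_0}}\bigl(\phi_0(B\times[-\delta_0,\delta_0])\bigr).
$$
Using that $G_s\circ\phi_0(B\times I)=\phi_0(B\times(I+s))$ whenever both sides lie in $W_0$, the $G_t$-invariance of $\rho_{\red{x_0}}$ together with countable additivity yields
$$
\rho_{\red{x_0}}\bigl(\phi_0(B\times I)\bigr)=|I|\,\nu(B)
$$
for every interval $I\subset[-\delta_0,\delta_0]$. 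In flow-box coordinates this says $(\phi_0)^*\rho_{\red{x_0}}=\nu\otimes dt$. Since $(\phi_0)^*\mc{H}^n_{\red{x_0}}$ is mutually absolutely continuous with $\vol_{\Sigma_{\red{x_0}}}\otimes dt$ (the Jacobian being bounded above and away from $0$ on the compact interval $[-\delta_0,\delta_0]$ by Lemma~\ref{l:flow}), Fubini's theorem forces $\nu\perp\vol_{\Sigma_{\red{x_0}}}$ on $\Sigma_{\red{x_0}}$.

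Given $\e>0$, pick a Borel set $\tilde N\subset\Sigma_{\red{x_0}}$ with $\vol_{\Sigma_{\red{x_0}}}(\tilde N)=0$ and $\nu(\Sigma_{\red{x_0}}\setminus\tilde N)=0$. Because $\Sigma_{\red{x_0}}$ is an $(n-1)$-dimensional manifold and $\vol_{\Sigma_{\red{x_0}}}$ is comparable to $\mc{H}^{n-1}|_{\Sigma_{\red{x_0}}}$, we have $\mc{H}^{n-1}(\tilde N)=0$. The definition of Hausdorff outer measure via ball covers then produces balls $B(\xi_j,r_j)\subset\Sigma_{\red{x_0}}$ with
$$
\tilde N\subset\bigcup_j B(\xi_j,r_j),\qquad \sum_j r_j^{n-1}<\e.
$$

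Finally I would verify the second identity of \eqref{e:mucond}. Partition $[-\delta,\delta]$ into finitely many subintervals $I_k$ of length at most $\delta_0$, and apply $G_{t_k}$-invariance for suitable $t_k$ to bring each piece $\phi(A\times I_k)$ back inside the flow box $W_0$. Setting $A:=\Sigma_{\red{x_0}}\setminus\bigcup_j B(\xi_j,r_j)$, which satisfies $\nu(A)=0$, the disintegration formula gives
$$
\rho_{\red{x_0}}\bigl(\phi(A\times I_k)\bigr)=|I_k|\,\nu(A)=0
$$
for each $k$; summing over $k$ yields $\rho_{\red{x_0}}(\Lambda_{\red{x_0},\delta}\setminus\bigcup_j T_\delta(\xi_j,r_j))=0$, which is the desired conclusion.

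\textbf{Main obstacle.} The main technical point is the disintegration $\rho_{\red{x_0}}=\nu\otimes dt$ in flow-box coordinates and the passage from $\rho_{\red{x_0}}\perp\mc{H}^n_{\red{x_0}}$ to $\nu\perp\vol_{\Sigma_{\red{x_0}}}$. The former requires that the flow time parameter supports a genuine linear structure on $\rho_{\red{x_0}}$-mass, which comes from $G_t$-invariance together with injectivity of $\phi_0$ on a neighborhood of $\Sigma_{\red{x_0}}$; the latter then reduces to a standard Fubini argument provided the Jacobian relating the Sasaki measure to $\vol_{\Sigma_{\red{x_0}}}\otimes dt$ is uniformly two-sided bounded, which Lemma~\ref{l:flow} guarantees after shrinking $\delta_0$. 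The possible failure of global injectivity of $\phi$ on $\Sigma_{\red{x_0}}\times[-\delta,\delta]$ for large $\delta$ is harmless, since we only need to \emph{cover} $\Lambda_{\red{x_0},\delta}$ and the argument above is applied on the small scale $\delta_0$ and then summed.
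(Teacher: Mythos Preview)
Your proposal is correct and follows essentially the same approach as the paper: push $\rho_{x_0}$ down to a measure on $\Sigma_{x_0}$ via a flow-box diffeomorphism, use $G_t$-invariance to obtain the product form $\nu\otimes dt$, deduce $\nu\perp\vol_{\Sigma_{x_0}}$ from $\rho_{x_0}\perp\mc{H}^n_{x_0}$, and cover the resulting $(n-1)$-null set by balls. The only minor difference is that the paper derives the product structure via the distributional identity $\partial_t^*\rho_{x_0}=0$ tested against smooth functions, whereas you argue directly from invariance on rectangles; and you are more explicit than the paper about handling $\delta>\delta_0$ by partitioning $[-\delta,\delta]$ into short subintervals.
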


\begin{proof}
Fix $\delta>0$ so that 
$$
[-\delta,\delta]\times \Sigma_{\red{x_0}}\ni (t,\red{q})\mapsto G_t(\red{q})\in  \Lambda_{\red{x_0},\delta}
$$ 
is a diffeomorphism and use $(t,\red{q})$ as coordinates on $\Lambda_{\red{x_0},\delta}$. 

We integrate $\rho_{\red{x_0}}$ over $\Lambda_{\red{x_0},\delta}$ to obtain a measure on $\Sigma_{\red{x_0}}$. In particular, for $A\subset \Sigma_{\red{x_0}}$ Borel, define the measure
\begin{equation}
\label{e:defnu}
\tilde{\rho}_{\red{x_0}}(A):=\frac{1}{2\delta}\rho_{\red{x_0}}\left(\bigcup_{t=-\delta}^\delta G_t(A)\right).
\end{equation}
{Then, the invariance of $\rho_{\red{x_0}}$ implies that $\partial_t^*\rho_{\red{x_0}}=0$, \red{where for $F\in C_c^\infty(-\delta,\delta)\times \Sigma_{\red{x_0}}$, $\partial_t^*\rho_{\red{x_0}}(F)=\rho_{\red{x_0}}(\partial_tF).$} In particular, for all $F\in C_c^\infty (-\delta,\delta)\times \Sigma_{\red{x_0}}$, 
$$\int \partial_t Fd\rho_{\red{x_0}}=0.$$
Now, fix $\chi\in C_c^\infty(-\delta,\delta)$ with with $\int\chi dt=1$. Let $f\in C_c^\infty((-\delta,\delta)\times \Sigma_{\red{x_0}})$ and define 
$$\bar{f}(q):=\int f(t,q)dt.$$
Then $f(t,q)-\chi(t)\bar{f}(q)=\partial_t F$ with 
$$F(t,q):=\int_{-\infty}^t f(s,q)-\chi(s)\bar{f}(q)ds\in C_c^\infty((-\delta,\delta)\times \Sigma_{\red{x_0}}).$$
Therefore, for all $f\in C_c^\infty((-\delta,\delta)\times \Sigma_{\red{x_0}}) $ and $\chi \in C_c^\infty(-\delta,\delta)$ with $\int \chi dt=1$,
$$\int f(t,q)d\rho_{\red{x_0}}(t,q)=\int \chi(t)\bar{f}(q)d\rho_{\red{x_0}}(t,q) =\iiint f(s,q)ds\chi(t)d\rho_{\red{x_0}}(t,q).$$
Now, let $B\subset \Sigma_{\red{x_0}}$ be Borel, $I\subset (-\delta,\delta)$ Borel, and $f_n(t,q)\uparrow 1_{I}(t)1_{B}(q)$. Then by the dominated convergence theorem,
$$\rho_{\red{x_0}}(I\times B)=\iint |I|1_{B}(q)\chi(t)d\rho_x(t,q).$$
Next, let $\chi_n\uparrow \delta^{-1}1_{[0,\delta]}$ with $\int \chi_n \equiv 1$. Then we obtain
$$\rho_{\red{x_0}}(I\times B)=\frac{|I|}{\delta}\rho_{\red{x_0}}([0,\delta]\times B).$$
So, letting $\tilde{\rho}_{\red{x_0}}(B):=\delta^{-1}\mu([0,\delta]\times B)$, we have that for rectangles $I\times B$, $\rho_{\red{x_0}}(I\times B)=dt\times d\tilde{\rho}_{\red{x_0}}(I\times B)$. But then, since these sets generate the Borel sigma algebra,
}
\begin{equation}
\label{nuDesc}
\rho_{\red{x_0}}= dt\times \tilde{\rho}_{\red{x_0}}.
\end{equation}

Now, notice that $\mc{H}^n_{\red{x_0}}=g(t,\red{q})dt\times d\vol_{\Sigma_{\red{x_0}}}$ where $0<c<g\in C^\infty$. In particular, since 
$$
dt\times \tilde{\rho}_{\red{x_0}}\perp dt\times d\vol_{\Sigma_{\red{x_0}}}
$$
 we have that $\tilde{\rho}_{\red{x_0}}\perp d\vol_{\Sigma_{\red{x_0}}}$.

Thus, there exists $N,P\subset \Sigma_{\red{x_0}}$ so that $\tilde{\rho}_{\red{x_0}}(P)=\vol_{\Sigma_{\red{x_0}}}(N)=0$ and $\Sigma_{\red{x_0}}=N\sqcup P$. Hence for any $\e>0$, there exist $\xi_j\in \Sigma_{\red{x_0}}$ and $r_j>0$ so that 
$$
\sum_j r_j^{n-1}<\e, \quad \tilde{\rho}_{\red{x_0}}\left(\bigcup_jB(\xi_j,r_j)\right)=\tilde{\rho}_{\red{x_0}}(\Sigma_{\red{x_0}}).
$$
The lemma then follows from~\eqref{nuDesc} and invariance of $\rho_{\red{x_0}}$.
\end{proof}

\begin{lem}
\label{l:inv}
Suppose that $0\leq f\in L^1(\Lambda_{\red{x_0}},\mc{H}^n_{\red{x_0}})$ with $fd\mc{H}^n_{\red{x_0}}$ invariant under $G_t$. Then for $\delta_0>0$ small enough, write
$$
[-\delta_0,\delta_0]\times \Sigma_{\red{x_0}}\ni (t,q)\mapsto G_t(q)\in \Lambda_{\red{x_0}}
$$
for coordinates on $\Lambda_{\red{x_0},\delta_0}$. We have 
$$
f1_{\Lambda_{\red{x_0},\delta_0}}d\mc{H}^n_{\red{x_0}}=\tilde{f}(q)1_{[-\delta_0,\delta_0]}(t)dt\times d\vol_{\Sigma_{\red{x_0}}}
$$
where 
$$
\tilde{f}(q)=f(0,q)|\nu(H_p)|(0,q)
$$
and $\nu$ is a unit normal to $\Sigma_{\red{x_0}}\Subset \Lambda_{\red{x_0},\delta_0}$ with respect to the Sasaki metric. 
\end{lem}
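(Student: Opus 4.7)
The plan is to introduce flow-box coordinates on $\Lambda_{x_0,\delta_0}$, compute the Jacobian of the parametrizing map at $t=0$, and read off the product structure directly from $G_t$-invariance. First, since $\partial_\xi p\neq 0$ on $\Sigma$, the Hamiltonian vector field $H_p=\partial_\xi p\cdot\partial_x-\partial_x p\cdot\partial_\xi$ has a nonvanishing horizontal component at every $q\in\Sigma_{x_0}$, while $T_q\Sigma_{x_0}\subset T_q(T^*_{x_0}M)$ is purely vertical. Thus $H_p$ is transverse to $\Sigma_{x_0}$ in $\Lambda_{x_0}$, and for $\delta_0>0$ small enough the map
$$
\Phi:[-\delta_0,\delta_0]\times\Sigma_{x_0}\to \Lambda_{x_0,\delta_0},\qquad \Phi(t,q):=G_t(q),
$$
is a smooth diffeomorphism. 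The group property of the flow immediately gives $G_s\circ\Phi(t,q)=\Phi(t+s,q)$, so in these coordinates $G_s$ acts by translation in the $t$-variable whenever both sides make sense.

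Next I would compute $\Phi^* d\mathcal{H}^n_{x_0}$ at $t=0$. The differential $d\Phi_{(0,q)}$ sends $\partial_t$ to $H_p(q)$ and restricts to the identity on $T_q\Sigma_{x_0}$. Since $d\mathcal{H}^n_{x_0}$ is the volume form of the Sasaki metric restricted to the $n$-dimensional submanifold $\Lambda_{x_0}$, choosing a Sasaki-orthonormal frame $e_1,\dots,e_{n-1}$ of $T_q\Sigma_{x_0}$ gives
$$
\Phi^* d\mathcal{H}^n_{x_0}\big|_{t=0}=\bigl|H_p\wedge e_1\wedge\cdots\wedge e_{n-1}\bigr|\,dt\wedge d\vol_{\Sigma_{x_0}}=|\nu(H_p)|(0,q)\,dt\wedge d\vol_{\Sigma_{x_0}},
$$
where $\nu$ is the indicated unit normal. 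Writing $\Phi^* d\mathcal{H}^n_{x_0}=J(t,q)\,dt\, d\vol_{\Sigma_{x_0}}$, this gives $J(0,q)=|\nu(H_p)|(0,q)$; I will not need an explicit formula for $J$ at $t\neq 0$.

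Finally, pull back the measure: $\Phi^*(f\, 1_{\Lambda_{x_0,\delta_0}}\,d\mathcal{H}^n_{x_0})=g(t,q)\, 1_{[-\delta_0,\delta_0]}(t)\,dt\,d\vol_{\Sigma_{x_0}}$ with $g(t,q)=f(\Phi(t,q))J(t,q)$. Since $G_s$ is translation by $s$ in the $t$-coordinate, $G_t$-invariance of $f\,d\mathcal{H}^n_{x_0}$ translates to $g(t+s,q)=g(t,q)$ for a.e.\ $(t,q)$ and small $s$, whence $g$ is independent of $t$. Therefore $g(t,q)=g(0,q)=f(q)|\nu(H_p)|(0,q)=\tilde f(q)$, giving the claimed product decomposition. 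The only non-formal step is the Jacobian computation, where one must recognize that the conversion factor between $dt$ and the Hausdorff length in the direction normal to $\Sigma_{x_0}$ inside $\Lambda_{x_0}$ is precisely $|\nu(H_p)|$, reflecting that $\Sigma_{x_0}$ is not generally Sasaki-orthogonal to the flow direction.
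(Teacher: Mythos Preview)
Your proof is correct and follows essentially the same approach as the paper: both introduce flow-box coordinates via $\Phi(t,q)=G_t(q)$, write the pulled-back measure as a density times $dt\,d\vol_{\Sigma_{x_0}}$, use that $G_t$-invariance becomes $t$-translation invariance to conclude the density is $t$-independent, and then identify the density at $t=0$ as $f(0,q)|\nu(H_p)|(0,q)$ via the Jacobian of $\Phi$. Your wedge-product computation of the Jacobian is just a slightly more explicit version of the paper's Radon--Nikodym computation $\partial_t(r\circ G_t)|_{t=0}=|\nu(H_p)|$.
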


\begin{proof}
Observe that $1_{\Lambda_{\red{x_0},\delta_0}}d\mc{H}^n_{\red{x_0}}$ is the volume measure on $\Lambda_{\red{x_0},\delta_0}$. Therefore,  $1_{\Lambda_{\red{x_0},\delta_0}}d\mc{H}^n_{\red{x_0}}\ll 1_{[-\delta_0,\delta_0]}(t)dt\times d\vol_{\Sigma_{\red{x_0}}}$ and in particular,
$$
f1_{\Lambda_{\red{x_0},\delta_0}}d\mc{H}^n_{\red{x_0}}=f(t,q)\frac{d\mc{H}^n_{\red{x_0}}}{dt\times d\vol_{\Sigma_{\red{x_0}}}}(t,q)1_{[-\delta_0,\delta_0]}(t)dt\times d\vol_{\Sigma_{\red{x_0}}}.
$$
Since $fd\mc{H}^n_{\red{x_0}}$ is invariant under $G_t${, it is invariant under translation in $t$ and we have}
$$
f(t,q)\frac{d\mc{H}^n_{\red{x_0}}}{dt\times d\vol_{\Sigma_{\red{x_0}}}}(t,q)=\tilde{f}(q)
$$
is constant in time.

To compute $\tilde{f}(q)$, we need only compute 
$$
\frac{d\mc{H}^n_{\red{x_0}}}{dt\times d\vol_{\Sigma_{\red{x_0}}}}(0,q).
$$
For this, observe that $1_{\Lambda_{\red{x_0},\delta}}\mc{H}^n_{\red{x_0}}$ is the volume measure on $\Lambda_{\red{x_0},\delta}$ with respect to the Sasaki metric. Therefore, we have $d\vol_{\Sigma_{\red{x_0}}}=N\lrcorner d\vol_{\Lambda_{\red{x_0},\delta_0}}$ where $N$ is a unit normal to $\Sigma_{\red{x_0}}$. More precisely, if $r\in C^\infty(\Lambda_{x_0,\delta_0})$ has $dr|_{\Sigma_{\red{x_0}}}(V)=\langle N,V\rangle_{g_s}$ where $g_s$ denotes the Sasaki metric and $V\in T_{\Sigma_{\red{x_0}}}\Lambda_{\red{x_0},\delta_0}$, then $\nu=dr|_{\Sigma_{\red{x_0}}}$ is a unit conormal to $\Sigma_{\red{x_0}}$ and
$$
\frac{d\mc{H}^n_{\red{x_0}}}{dt\times d\vol_{\Sigma_{\red{x_0}}}}(0,q)=|\partial_t (r\circ G_t)|_{t=0}|(q)=|\nu(H_p)|(q).
$$
\end{proof}

\section{A proof of Theorem~\ref{thm:local} for the Laplacian}
\label{s:laplace}

One can use a strategy similar to that in \cite{GT} to prove Theorem~\ref{thm:local} for eigenfunctions of the Laplacian. We sketch the proof in the case $\mu_{\red{x_0}}\perp \mc{H}^n_{\red{x_0}}$ for the convenience of the reader. {Following the arguments in Section~\ref{s:proof}, replacing Lemma~\ref{l:octopus} with~\eqref{QE3} it is possible to give a proof of the full theorem in this way. Note however, that much greater care would be needed to eliminate the dependence of the constant on $M$.  We wish to stress that the analysis in the next sections gives an effective geometric explanation for the gains in $L^\infty$ norms that is not available through use of the spectral projector. Moreover, it shows that the structure of the $L^\infty$ gains depends only on quantitative control on the transversality of the flow to the fibers.}

We start by constructing a convenient partition of unity. {This partition will also be used in the proof of the general case, so we write a careful proof.}

\begin{lem}
Fix $(x_0,\xi_j)\in  \Sigma_{x_0}$ and $r_j>0$, $j=1,\dots K<\infty$, $\delta>0$. Then there exist $\chi_j\in C_c^\infty(T^*M;[0,1]),$ $j=1\dots K$ so that 
\begin{equation}
\label{e:partition}
\begin{gathered} 
\supp \chi_j\cap \Lambda_{\red{x_0}}\subset T_{\red{4\delta}}(\xi_j,2r_j)\cap \Lambda_{{x_0},4\delta},\qquad H_p\chi_j\equiv 0 \text{ on }\Lambda_{x_0,3\delta}\\
\sum_j\chi_j\equiv 1\text{ on }\bigcup_{j=1}^KT_{\red{4\delta}}(\xi_j,r_j)\cap \Lambda_{x_0,3\delta},\qquad 0\leq \sum_j\chi_j\leq 1, \text{ on } \Lambda_{{x_0}}
\end{gathered}
\end{equation}
Furthermore, if 
\begin{equation}
\label{e:cover}
\bigcup_{j=1}^KT_{\red{4\delta}}(\xi_j,2r_j)\supset \Lambda_{x_0,3\delta},
\end{equation}
there exists $\chi_j$ {satisfying}~\eqref{e:partition} and
\begin{equation}
\label{e:partition2}
\sum_j\chi_j\equiv 1\text{ on } \Lambda_{x_0,3\delta}.
\end{equation}
\end{lem}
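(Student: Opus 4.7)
The key idea is that the conditions we need on each $\chi_j$ are essentially a statement about a product structure in flowbox coordinates for the bicharacteristic flow through $\Sigma_{x_0}$. Since $\partial_\xi p\neq 0$ on $\Sigma$, the bicharacteristics of $H_p$ are transverse to the fiber $T^*_{x_0}M$, so $\Sigma_{x_0}$ is a smooth manifold and there exists $\delta_0>4\delta$ (shrink $\delta$ if necessary) such that the map $(t,q)\mapsto G_t(q)$ is a diffeomorphism from $(-\delta_0,\delta_0)\times \Sigma_{x_0}$ onto a neighborhood of $\Lambda_{x_0,\delta_0}$ in $T^*M$. By Remark~\ref{r:compact} we may assume $\Sigma$, and hence $\Sigma_{x_0}$, is compact. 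In these coordinates the Hamiltonian vector field becomes $H_p=\partial_t$, so $H_p$-invariance reduces to independence of $t$.

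With this in hand, I would first build a standard partition of unity on $\Sigma_{x_0}$: choose $\tilde\chi_j\in C_c^\infty(\Sigma_{x_0};[0,1])$ with $\supp\tilde\chi_j\subset B(\xi_j,2r_j)$, $0\le \sum_j\tilde\chi_j\le 1$ on $\Sigma_{x_0}$, and $\sum_j\tilde\chi_j\equiv 1$ on $\bigcup_j B(\xi_j,r_j)$. Next, pick a temporal cutoff $\psi\in C_c^\infty((-4\delta,4\delta);[0,1])$ with $\psi\equiv 1$ on $[-3\delta,3\delta]$. On the flowbox $\Lambda_{x_0,\delta_0}$, set
$$
\chi_j(t,q):=\psi(t)\,\tilde\chi_j(q),
$$
and extend $\chi_j$ to all of $T^*M$ by pulling back via a smooth tubular-neighborhood retraction of $\Lambda_{x_0,\delta_0}$ in $T^*M$ and multiplying by an additional spatial cutoff, finally extending by $0$. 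Since the properties in~\eqref{e:partition} are conditions only on $\Lambda_{x_0}$ (for the support and upper bound) and on $\Lambda_{x_0,3\delta}$ (for the $H_p$-invariance), and $H_p$ is tangent to $\Lambda_{x_0,3\delta}$, the choice of extension off $\Lambda_{x_0}$ is irrelevant for verifying them.

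Verification is then immediate from the product structure: in coordinates, $\supp\chi_j\cap \Lambda_{x_0,\delta_0}\subset [-4\delta,4\delta]\times B(\xi_j,2r_j)=T_{4\delta}(\xi_j,2r_j)$, while outside $\Lambda_{x_0,\delta_0}$ the function $\chi_j$ vanishes, giving the support property. On $\Lambda_{x_0,3\delta}$ we have $\psi(t)\equiv 1$, so $H_p\chi_j=\partial_t\chi_j=\psi'(t)\tilde\chi_j(q)=0$, and there $\sum_j\chi_j(t,q)=\sum_j\tilde\chi_j(q)$, which lies in $[0,1]$ and equals $1$ precisely on $\bigcup_j B(\xi_j,r_j)$, i.e., on $\bigcup_j T_{4\delta}(\xi_j,r_j)\cap \Lambda_{x_0,3\delta}$. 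For the furthermore statement, the cover condition~\eqref{e:cover} translates (using the same flowbox) to $\bigcup_j B(\xi_j,2r_j)\supset \Sigma_{x_0}$, so the partition of unity $\tilde\chi_j$ can be arranged to satisfy $\sum_j\tilde\chi_j\equiv 1$ on $\Sigma_{x_0}$, yielding~\eqref{e:partition2}.

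The only genuine subtlety, which I view as the main (minor) obstacle, is producing the smooth extension of the flowbox functions to all of $T^*M$ in a way compatible with the pseudodifferential calculus later — concretely, making sure the $\chi_j$ are $C^\infty$ symbols on $T^*M$, not merely on $\Lambda_{x_0,\delta_0}$. This is handled by working in any smooth tubular neighborhood of $\Lambda_{x_0,\delta_0}$ and multiplying by a bump supported in that neighborhood, using compactness of $\Sigma_{x_0}$ to ensure everything stays compactly supported.
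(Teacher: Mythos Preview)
Your proposal is correct and follows essentially the same approach as the paper: build a partition of unity $\tilde\chi_j$ on $\Sigma_{x_0}$ subordinate to the balls $B(\xi_j,2r_j)$, multiply by a temporal cutoff $\psi$ in flowbox coordinates $(t,q)\mapsto G_t(q)$, and then extend off $\Lambda_{x_0,4\delta}$ to $T^*M$. The paper is in fact more casual than you about the extension step (it simply says ``extend $\chi_j$ from $\Lambda_{x_0,4\delta}$ to a compactly supported function on $T^*M$ arbitrarily''), while you spell out a tubular-neighborhood construction; otherwise the arguments coincide.
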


\begin{proof}
Let $\tilde{\chi}_j\in C_c^\infty(\Sigma_{x_0};[0,1])$ {satisfy} 
\begin{gather*}
\sum_j\tilde{\chi}_j\equiv 1 \text{ on }\bigcup_{j=1}^KB(\xi_j,r_j),\qquad \supp \tilde{\chi}_j\subset B(\xi_j,2r_j)\cap \Sigma_{x_0},\qquad 0\leq \sum_j\tilde{\chi}_j\leq 1.
\end{gather*}
Next, let $\psi\in C_c^\infty(\re;[0,1])$ with $\psi\equiv 1$ on $[-3\delta,3\delta]$ and $\supp \psi\subset (-4\delta,4\delta)$. For $\delta>0$ small enough, $G_t:[-4\delta,4\delta]\times \Sigma_{x_0}\to \Lambda_{x_0,4\delta}$ is a diffeomorphism and so we can define $\chi_j\in C_c^\infty(\Lambda_{x_0,4\delta};[0,1])$ by
$$
\chi_j(G_t({x_0},\xi))=\psi(t)\tilde{\chi}_j({x_0},\xi)
$$
so that $H_p\chi_j\equiv 0$ on $\Lambda_{x_0,3\delta}$. Finally, extend $\chi_j$ from $\Lambda_{x_0,4\delta}$ to a compactly supported function on $T^*M$ arbitrarily. Then $\chi_j$ $j=1,\dots K$ satisfy~\eqref{e:partition}.

If~\eqref{e:cover} holds, then we may take $\tilde{\chi}_j$ a partition of unity on $\Sigma_{x_0}$ subordinate to $B(\xi_j,2r_j)$ and hence obtain~\eqref{e:partition2} by the same construction.
\end{proof}

\begin{proof}[Sketch proof for Laplace eigenfunctions]
Fix $\delta >0$ and let $\rho \in S(\R)$ with $\rho(0)=1$ and $\supp\hat{\rho} \subset [\delta, 2 \delta].$
Let 
$$
S^*M(\gamma):= \{ (x,\xi); | |\xi|_{x} - 1| \leq \gamma \}
$$
 and  $\chi(x,\xi) \in C^{\infty}_{0}(T^*M)$ be a cutoff near the cosphere $S^*M$ with
$ \chi(x,\xi) =1$ for $(x,\xi) \in S^*M(\gamma)$ and $\chi(x,\xi) = 0$ when $(x,\xi) \in T^*M \setminus S^*M(2\gamma).$ 

Suppose that  $(-h^2\Delta_g-1)u_h=0$, and $u_h$ has defect measure $\mu$ with $\mu_{\red{x_0}}\perp \mc{H}^n_{\red{x_0}}.$ 
Then
\begin{align} 
\label{QE0}
u_h &= \rho( h^{-1} [ {\sqrt{-h^2 \Delta_g}} -1]) u_h = \int_{\R} \hat{\rho}(t)  e^{it [{\sqrt{-h^2\Delta_g}}-1]/h} \chi(x,hD) u_h \, dt + O_{\gamma}(h^{\infty}). 
\end{align}

Setting $V(t,x,y,h):=  \Big( \hat{\rho}(t)  e^{it [{\sqrt{-h^2\Delta_g}}-1]/h} \chi(x,hD) \Big) (t,x,y),$  by propagation of singularities,
$$
WF_h'( V(t,\cdot, \cdot,h)) \subset \{ (x,\xi,y,\eta); (x,\xi) = G_t(y,\eta),  \,  | |\xi|_{{x}} -1 | \leq 2\gamma \, , t \in [\delta, 2\delta] \}.
$$
Let $b_{{\red{x_0}},\gamma}(y,\eta) \in C_c^\infty(T^*M)$ have
$$
\supp b_{\red{x_0},\gamma} \subset \{(y,\eta)\mid (y,\eta)=G_t(x,\xi)\, \text{for some} \, (x,\xi)\in S^*M(3\gamma)\,\text{with}\, {d_M}(x,x_0)<{2}\gamma, |t|\leq {4}\delta\}
$$ 
with 
$$
b_{\red{x_0},\gamma}\equiv 1\text{ on }\{(y,\eta)\mid (y,\eta)=G_t(x,\xi)\, \text{for some} \, (x,\xi)\in S^*M(2\gamma)\,\text{with}\, {d_M}(x,x_0)<\gamma, |t|\leq {3}\delta\}.
$$

Then, by wavefront calculus, it follows that
\begin{equation} 
\label{microlocalize-t}
u_h(\red{x_0}) = \int_{M}  \bar{V}(\red{x_0},y,h)  \, b_{\red{x_0},\gamma}(y,hD_y) u_h(y) dy  + O_{\gamma}(h^{\infty}), 
\end{equation}
where,
$$
\bar{V}(x,y,h) := \int_{\R} \hat{\rho}(t) \big( e^{it [{\sqrt{-h^2\Delta_g}}-1]/h} \chi(x,hD) \big) (t,x,y) \, dt.
$$

By a standard stationary phase argument \cite[Chapter 5]{SoggeBook},
\begin{equation} 
\label{wkb-t}
\bar{V}(x,y,h) = h^{\frac{1-n}{2}} \sum_{\pm}e^{\pm i \red{d_M}(x,y)/h}   a_{\pm}(x,y,h) \,  \hat{\rho}(\red{d_M}(x,y)) + O_{\gamma}(h^{\infty}), 
\end{equation}
where $ a_{\pm}(x,y,h) \in S^{0}(1)$.

Then, in view of~\eqref{wkb-t} and~\eqref{microlocalize-t},
\begin{equation} 
\label{QE1}
u_h(\red{x_0}) = (2\pi h)^{\frac{1-n}{2}} \sum_{\pm}\int_{ \delta < |y-\red{x_0}| < 2\delta} e^{\pm i\red{d_M}(\red{x_0},y)/h}  a_{\pm}(\red{x_0},y,h) \hat{\rho}(\red{d_M}(\red{x_0},y)) \, b_{x,\gamma}(y,hD_y) u_h(y) dy  + O_{\gamma}(h^{\infty}). 
\end{equation}

Let $\chi_j$, be as in~\eqref{e:partition} with $T_{\red{4\delta}}(\xi_j,r_j)$ satisfying~\eqref{e:foot} and $\sum r_j^{n-1}<\e$. Define $\psi=1-\sum_j\chi_j$. Then
$$
u_h(\red{x_0}) = \sum_{\pm} I_{\pm} +II_{\pm}+O_{\gamma}(h^\infty)
$$
where
\begin{equation}
 \label{QE1}
\begin{aligned}
I_{\pm}&=(2\pi h)^{\frac{1-n}{2}} \int_{ \delta < |y-\red{x_0}| < 2\delta} e^{\pm i\red{d_M}(\red{x_0},y)/h} a_{\pm}(\red{x_0},y,h) \hat{\rho}(\red{d_M}(\red{x_0},y)) \,\psi(y,hD_{y})b_{\red{x_0},\gamma}(y,hD_y) u_h(y) dy \\
II_{\pm}&=\sum_j(2\pi h)^{\frac{1-n}{2}} \int_{ \delta < |y-\red{x_0}| < 2\delta} e^{\pm i\red{d_M}(\red{x_0},y)/h} a_{\pm}(\red{x_0},y,h) \hat{\rho}(\red{d_M}(\red{x_0},y)) \,\chi_j(y,hD_{y})b_{\red{x_0},\gamma}(y,hD_y) u_h(y) dy{.}
\end{aligned} 
\end{equation}
An application of  Cauchy-Schwarz to $I_{\pm}$ gives
\begin{align} 
\label{QE2}
\limsup_{h\to 0}h^{\frac{n-1}{2}}|I_{\pm}| &\leq  \, C   \limsup_{h\to 0}\| \psi(y,hD_{y})b_{\red{x_0},\gamma}(y,hD_y) u_h \|_{L^2}{.}
\end{align} 
{Next observe that}
\begin{align*}
\limsup_{\gamma\to 0}\limsup_{h\to 0}\|\psi(y,hD_{y})b_{\red{x_0},\gamma}(y,hD_y)u\|_{L^2}^2&=\limsup_{\gamma\to 0}\int_{S^*M} |\psi|^2|b_{\red{x_0},\gamma}(y,\xi)|^2d\mu\\
&\leq C{\mu}( \supp \psi\cap \Lambda_{\red{x_0},4\delta})\\
&\leq C {\mu_{\red{x_0}}}(\supp \psi) \leq C\e{.}
\end{align*}
{Note that the first inequality follows from the fact that $\lim_{\gamma\to 0}b_{\red{x_0},\gamma}\leq 1_{\Lambda_{\red{x_0},4\delta}}$.}
On the other hand, by propagation of singularities, for each $\chi_j$ in $II_{\pm}$, we may insert $\varphi_j\in C_c^\infty(M)$ localized to 
$$\pi (T_{\red{4\delta}}(\xi_j,r_j)\cap \{\delta<{d_M}(x,x_0)<2\delta\}){,}$$
 where $\pi:T^*M\to M$ is projection to the base. In particular, replacing $\chi_j(y,hD_y)$ by $\varphi_j(y)\chi_j(y,hD_y)$ and applying Cauchy-Schwarz to each term of $II$, we have
\begin{align} 
\label{QE3}
\limsup_{h\to 0}h^{\frac{n-1}{2}}|II_{\pm}| &\leq  C\sum_j\|\varphi_j\|_{L^2}  \limsup_{h\to 0}\| \chi_j b_{\red{x_0},\gamma}(y,hD_y) u_h \|_{L^2}{.}
\end{align} 
Now, since $\varphi_j$ is supported on a tube of radius $r_j$, $\|\varphi_j\|_{L^2}\leq C r_j^{(n-1)/2}$. Furthermore,
\begin{align*}
\lim_{\gamma\to 0}\lim_{h\to 0}\|\chi_j(y,hD_{y})b_{\red{x_0},\gamma}(y,hD_y)u\|_{L^2}^2&=\lim_{\gamma\to 0}\int_{S^*M} \chi_j^2|b_{\red{x_0},\gamma}(y,\xi)|^2d\mu\leq \int_{\Lambda_{\red{x_0}}} \chi_j^2d\mu{.}
\end{align*}
Thus, applying Cauchy-Schwarz once again to the sum in~\eqref{QE3},
$$
\limsup_{h\to 0}h^{\frac{n-1}{2}}|II_{\pm}| \leq C\left(\sum_j r_j^{n-1}\right)^{1/2}\left(\int \sum_j \chi_j^2d\mu\right)^{1/2}\leq C\e^{1/2}.
$$
Sending $\e\to 0$ proves the theorem.
\end{proof}

\section{$L^\infty$ estimates microlocalized to $\Lambda_{\red{x_0}}$}

For the next two sections, we assume that $u$ is compactly microlocalized and $Pu=o_{L^2}(h)$ where $P$ is as in Theorem~\ref{thm:local}.

\begin{lem} \label{vanish}
Suppose that $P$ is as in Theorem~\ref{thm:local}, $u$ is compactly microlocalized, and $Pu=o_{L^2}(h)$. Then for $q,a\in S^\infty(T^*M)$ 
\begin{align*}
\|a(x,hD)q(x,hD)u\|_{L^2}^2&=\int |a|^2|q|^2d\mu +o(1),\\
\|a(x,hD)Pq(x,hD)u\|_{L^2}^2&=h^2\int |a|^2|H_pq|^2d\mu +o(h^2).
\end{align*}
\end{lem}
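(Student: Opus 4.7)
The plan is to derive both identities from the defining property~\eqref{defect} of the defect measure, combined with standard semiclassical symbol calculus (composition, adjoints, and the commutator formula). Compact microlocalization of $u$ lets us treat $a$ and $q$ effectively as compactly supported symbols, so $L^2$-boundedness and all composition formulas hold with uniform $O(h)$ remainders.

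For the first identity, I would rewrite
$$\|a(x,hD)q(x,hD)u\|_{L^2}^2 = \langle B(x,hD)u,u\rangle,\qquad B(x,hD):=(a(x,hD)q(x,hD))^*\,a(x,hD)q(x,hD).$$
By the composition and adjoint formulas, $B(x,hD)$ is a semiclassical pseudodifferential operator with principal symbol $|a|^2|q|^2$ and remainder of operator norm $O(h)$ on the microlocal support of $u$. Applying~\eqref{defect} to the principal part gives $\int|a|^2|q|^2d\mu$, while the $O(h)$ remainder contributes $o(1)$ because $\|u\|_{L^2}=1$.

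For the second identity, I would use the commutator decomposition
$$a(x,hD)Pq(x,hD)u = a(x,hD)q(x,hD)Pu + a(x,hD)[P,q(x,hD)]u.$$
The first term has $L^2$-norm $O(\|Pu\|_{L^2})=o(h)$ by the quasimode hypothesis (again using compact microlocalization to bound $\|a(x,hD)q(x,hD)\|_{L^2\to L^2}$). For the commutator, the standard expansion gives
$$[P,q(x,hD)] = \tfrac{h}{i}(H_p q)(x,hD) + h^2 R,$$
where $R$ is bounded on the microlocalized part of $u$. Therefore
$$a(x,hD)Pq(x,hD)u = \tfrac{h}{i}\,a(x,hD)(H_p q)(x,hD)u + O_{L^2}(h^2) + o_{L^2}(h).$$
Taking squared $L^2$-norms and applying the first identity to the pair $(a,H_p q)$ yields $h^2\int|a|^2|H_p q|^2 d\mu + o(h^2)$ for the leading contribution; the cross terms between the $o(h)$, $O(h^2)$ and $O(h)$ pieces are $o(h^2)$ by Cauchy--Schwarz.

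The only step that needs some care is the second one: one must track the $o(h)$ error from $Pu$ against the $O(h)$ principal commutator term so that the cross term in the expansion of $\|X+Y\|^2$ does not spoil the $o(h^2)$ remainder. This is immediate from Cauchy--Schwarz, $2|\langle X,Y\rangle|\leq 2\|X\|\,\|Y\|=o(h)\cdot O(h)=o(h^2)$, so no genuine obstacle arises. Everything else is bookkeeping in the symbol calculus, made legitimate by compact microlocalization.
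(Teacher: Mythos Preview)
Your proposal is correct and follows essentially the same route as the paper: reduce to compactly supported symbols via compact microlocalization, read off the first identity from the defect-measure definition and the adjoint/composition calculus, and for the second identity write $Pq(x,hD)u=q(x,hD)Pu+\frac{h}{i}(H_pq)(x,hD)u+O_{L^2}(h^2)$ and invoke the first identity with $H_pq$ in place of $q$. Your explicit Cauchy--Schwarz treatment of the cross terms is a small elaboration of what the paper leaves implicit.
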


\begin{proof}
First observe that since $u$ is compactly microlocalized, there exists $\chi \in C_c^\infty(T^*M)$ so that 
$$u=\chi(x,hD)u+O_{\mc{S}}(h^\infty).$$
Therefore, we may assume $q,a\in C_c^\infty(T^*M)$. 
The first equality then follows from the definition of the defect measure and the fact that $[a(x,hD)]^*=\bar{a}(x,hD)+O_{L^2\to L^2}(h)$.
For the second, note that
\begin{align*}
Pq(x,hD)u&=q(x,hD)Pu+[P,q(x,hD)]u\\
&=q(x,hD)Pu+\frac{h}{i}\{p,q\}(x,hD)u+O_{L^2}(h^2).
\end{align*}
The lemma follows since $Pu=o_{L^2}(h)$.
\end{proof}

 At this point, following the argument in Koch--Tataru--Zworski \cite{KTZ}, we work $h$-microlocally. The first step is to reduce the $L^2 \to L^\infty$ bounds to a neighbourhood of $\Sigma=\{p=0\}.$
 
\begin{lem}
Suppose that $u$ is compactly microlocalized and $Pu=o_{L^2}(h).$
Then for $\chi_{\Sigma}\in C_c^\infty(T^*M)$ with $\chi_{\Sigma}\equiv 1$ in a neighborhood of $\Sigma=\{p=0\}$,
\begin{equation} 
\label{l2linfty}
\| (1-\chi_{\Sigma}(x,hD) ) u \|_{L^\infty}=o(h^{\frac{2-n}{2}}).
\end{equation}
\end{lem}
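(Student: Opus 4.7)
The plan is to use an elliptic parametrix for $P$ on $\supp(1-\chi_\Sigma)$, combined with the standard $L^2 \to L^\infty$ bound of order $h^{-n/2}$ for pseudodifferential operators with compactly supported symbol. Since $\chi_\Sigma \equiv 1$ near $\Sigma=\{p=0\}$, the principal symbol $p$ is elliptic on $\supp(1-\chi_\Sigma)$, and the hypothesis $Pu = o_{L^2}(h)$ is what will provide the $o$-type decay.

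First, since $u$ is compactly microlocalized, fix $\tilde{\chi}\in C_c^\infty(T^*M)$ with $\tilde{\chi}(x,hD)u=u+O_{\mathcal{S}}(h^\infty\|u\|_{L^2})$. Set $a=(1-\chi_\Sigma)\tilde{\chi}\in C_c^\infty(T^*M)$, which is supported in $\{|p|\ge c>0\}$. I would then carry out the standard elliptic parametrix construction: starting from $q_0=a/p\in C_c^\infty(T^*M)$, build iteratively symbols $q_k\in C_c^\infty(T^*M)$, $k\ge 0$, and Borel-sum them into $q\in C_c^\infty(T^*M)$ such that $Q:=\Op_h(q)$ satisfies
\[
QP \;=\; (1-\chi_\Sigma)(x,hD)\tilde{\chi}(x,hD) + O_{L^2\to L^2}(h^\infty).
\]
Applying this identity to $u$ and using compact microlocalization of $u$ gives
\[
(1-\chi_\Sigma)(x,hD)u \;=\; QPu + O_{\mathcal{S}}(h^\infty\|u\|_{L^2}).
\]

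Next, I would invoke the (standard) bound
\[
\|Qw\|_{L^\infty} \;\le\; C\,h^{-n/2}\,\|w\|_{L^2}
\]
for operators $Q$ with symbol in $C_c^\infty(T^*M)$. In local coordinates this follows from Plancherel applied to the Schwartz kernel $K_Q(x,y)=(2\pi h)^{-n}\int e^{i(x-y)\cdot\xi/h}q(x,\xi)\,d\xi$, which gives $\|K_Q(x,\cdot)\|_{L^2_y}\le C h^{-n/2}$ uniformly in $x$, after which Cauchy--Schwarz yields the stated inequality; a partition of unity transfers this to $M$. Combining with $\|Pu\|_{L^2}=o(h)$ produces
\[
\|QPu\|_{L^\infty} \;\le\; Ch^{-n/2}\|Pu\|_{L^2} \;=\; o(h^{1-n/2}) \;=\; o(h^{(2-n)/2}),
\]
which is exactly the claimed estimate.

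The only real work is in the parametrix construction, and that is entirely standard because we are in the elliptic region of $p$ and all symbols involved can be kept compactly supported (so there is no need to track behavior at infinity in $\xi$, and remainders are $O(h^\infty)$ in a strong sense). The main conceptual point is that the gain from $O(h^{-n/2+1/2})$ (which would be the trivial bound from $L^\infty \hookrightarrow $ Sobolev estimates) down to $o(h^{(2-n)/2})$ comes for free from the $o_{L^2}(h)$ quasimode hypothesis, rather than requiring any dynamical input; the dynamics will only be needed for the harder estimates on the piece $\chi_\Sigma(x,hD)u$ localized to $\Sigma$.
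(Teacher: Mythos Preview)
Your proposal is correct and takes essentially the same approach as the paper: an elliptic parametrix for $P$ on $\supp(1-\chi_\Sigma)$ (using compact microlocalization to keep all symbols compactly supported), followed by the $h^{-n/2}$ Sobolev/$L^2\to L^\infty$ bound for pseudodifferential operators with compactly supported symbol, combined with $Pu=o_{L^2}(h)$. The paper phrases the last step as first deducing $\|(1-\chi_\Sigma(x,hD))u\|_{L^2}=o(h)$ and then applying the Sobolev estimate to that function, whereas you apply the $L^\infty$ bound directly to $QPu$; these are equivalent.
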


\begin{proof}
Since $u$ is compactly microlocalized, there exists $\chi \in C_c^\infty(T^*M)$ so that 
$$
u=\chi(x,hD)u+O_{\mc{S}}(h^\infty{\|u\|_{L^2(M)}}).
$$
For $\chi_{\Sigma}\in C_c^\infty(T^*M)$ with $\chi_{\Sigma}\equiv 1$ in a neighborhood of $\Sigma$, $|p|\geq c>0$ on $\supp (1-\chi_{\Sigma})\chi.$
Therefore, by the elliptic parametrix construction, for any $q\in S^\infty(T^*M)$, there exists $e\in C_c^\infty(T^*M)$ so that 
$$
e(x,hD) P =(1-\chi_{\Sigma})(x,hD)q(x,hD)\chi(x,hD) +O_{\mc{D}'\to \mc{S}}(h^\infty)
$$ 
and in particular,  
\begin{equation}
\label{e:elliptic}
(1-\chi_{\Sigma})(x,hD) q(x,hD)u=o_{L^2}(h).
\end{equation}

The compact microlocalization of $u$ together with~\eqref{e:elliptic} {(for $q\equiv 1$)} and the Sobolev estimate \cite[Lemma 7.10]{EZB} implies
\begin{equation*} 
\| (1-\chi_{\Sigma}(x,hD) ) u \|_{L^\infty}{\leq Ch^{-\frac{n}{2}}\|(1-\chi_{\Sigma}(x,hD))u\|_{L^2(M)}}=o(h^{\frac{2-n}{2}}).
\end{equation*}
\end{proof}

To simplify the writing somewhat, we introduce the notation $u_{\Sigma}:= \chi_{\Sigma}(x,hD) u.$

\subsection{Microlocal $L^\infty$ bounds near $\Sigma$}

In view of (\ref{l2linfty}), it suffices to consider points in an arbitrarily small tubular neighborhood of $\Sigma = \{p=0 \}.$ More precisely, we cover $\supp \chi_{\Sigma}$ by a union $\cup_{j=0}^{N} B_j$ of open balls $B_j$ centered at points $(x_j,\xi_j) \in \Sigma \subset \{p=0 \}.$ We let $\chi_j \in C^{\infty}_0(B_j)$ be a corresponding partition of unity with 
\begin{equation}
\nonumber
u_{\Sigma} = \sum_{j=0}^{N} \chi_j(x,hD) u_{\Sigma}+O_{\mc{S}}(h^\infty)
\end{equation}
By possible refinement, the supports of $\chi_j$ can be chosen arbitrarily small.

Since the argument here is entirely local, it suffices to $h$-microlocalize to supp $\chi_0 \subset B_0$ where $B_0$ has  center
$(x_0,\xi_0)\in \{p=0\}$. Since we have assumed $\partial_\xi p\neq 0$ in $\{p=0\}$, we may assume that $\partial_{\xi_1}p(x_0,\xi_0)\neq 0$ and $\partial_{\xi'}p(x_0,\xi_0)=0$. Therefore, choosing $\supp \chi $ supported sufficiently close to $(x_0,\xi_0)$, it follows from the implicit function theorem that
$$
p\chi =e(x,\xi)(\xi_1-a(x,\xi'))
$$
with $ e(x,\xi) $ elliptic on $ \supp \chi_0$ provided the latter support is chosen small enough. Thus,
$$
P\chi_0=E(x,hD)(hD_{x_1}-a(x,hD_{x'}))\chi_0(x,hD)+hR\chi_0(x,hD).
$$
{Note that by adjusting $R$, we may assume that for each fixed $x_1$, $a(x_1, y,hD_{y'})$  is self adjoint on $L^2_{y'}$.}
Therefore, 
$$
(hD_{x_1}-a(x_1,x',hD_{x'}))\chi_0q(x,hD)u=E^{-1}(x,hD)P\chi_0 q(x,hD)u+hR_1\chi_0(x,hD)q(x,hD)u.
$$

In particular, from the standard energy estimate (see for example \cite[Lemma 3.1]{KTZ}) with $(x_1,x') \in \R^n,$
\begin{multline} 
\label{energy}
\| \chi_0 q(x,hD) u_{\Sigma} (x_1=s, \cdot) \|_{L^2_{x'}(\R^{n-1})} \leq  \|  \chi_0 q(x,hD) u_{\Sigma} (x_1=t,\cdot)  \|_{L^2_{x'}(\R^{n-1})} \\ 
 +  Ch^{-1} |s-t|^{1/2} (\| P \chi_0 q(x,hD) u_{\Sigma} \|_{L^2_x(\R^{n})}+h\|R_1\chi_0q(x,hD)u_{\Sigma}\|_{L^2_x(\R^n)}). 
\end{multline} 

\subsection{Microlocalization to the flowout}

Our next goal will be to insert microlocal cutoffs restricting to a neighborhood of $\Lambda_{x_0,\delta}$ for some $\delta>0$ into the right hand side of~\eqref{energy}.  

Let $\e\ll \delta$, $\chi_{\e,x_0}\in C_c^\infty(M;[0,1]))$ with 
$$
\chi_{\e,x_0}\equiv 1\text{ on }B(x_0,\e),\qquad \supp \chi_{\e,x_0}\subset B(x_0,2\e).
$$ 
Let $b_{\e,x_0}\in C_c^\infty(T^*M;[0,1])$ with 
\begin{gather}
\supp b_{\e,x_0}\cap \{p=0\}\subset\bigcup_{x\in B(x_0,3\e)}\Lambda_{x,3\delta},\qquad\supp b_{\e,x_0}\subset {\big\{q\mid d(q,|p|\leq \e)<2\e\big\}},\label{e:platypus1}\\
 b_{\e,x_0}\equiv 1\text{ on }{\Big\{q\,\big|\, d\Big(q,\bigcup_{t=-2\delta}^{2\delta}G_t\left\{(x,\xi)\mid  |p(x,\xi)|\leq\e, d(x,x_0)<2\e\right\}\Big)<\e\Big)\Big\}}\label{e:platypus2}.
\end{gather}

\begin{lem}
\label{l:squirrel}
There exists ${C_0}>0,\e_0>0$ and {$U$ a neighborhood of $(x_0,\xi_0)$} so that for all $\chi_0\in C_c^\infty(T^*M)$ supported in {$U$}, $0<\e\ll \delta<\e_0$, $\chi_{\e,x_0}$, $b_{\e,x_0}$ as above, $q\in S^\infty(T^*M)$, and $y_1\in \R$
\begin{multline} 
\label{UPSHOT 1}
\|(q\chi_{\e,x_0}\chi_0)(x,hD)u_{\Sigma}|_{x_1=y_1} \|_{L^2_{x'}(\R^{n-1})}\leq 2\delta_0^{-1/2}\|b_{\e,x_0}(x,hD)q(x,hD)\chi_0(x,hD)u_{\Sigma}\|_{L^2_x(\R^n)}\\
+{C_0}\delta_0^{\frac{1}{2}}h^{-1}\|b_{\e,x_0}(x,hD)Pq(x,hD)\chi_0(x,hD)u_{\Sigma}\|_{L^2_x(\R^n)}+o_{\e,\delta}(1)
\end{multline}
where $\delta_0:=\delta|\partial_{\xi}p(x_0,\xi_0)|_g$ and $|\partial_{\xi}p|_g:=| \partial_{\xi}p\cdot \partial_x|_g.$
\end{lem}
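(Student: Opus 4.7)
The plan is to apply the one-dimensional energy estimate~(\ref{energy}) directly to the microlocalized function $w:=(q\chi_{\e,x_0}\chi_0)(x,hD)u_\Sigma$ itself. The factorization $p\chi=e(\xi_1-a)$ derived just before~(\ref{energy}) remains valid on the support of $q\chi_{\e,x_0}\chi_0$ provided the neighborhood $U$ is chosen small enough about $(x_0,\xi_0)$, so the same derivation yields
\[
\|w|_{x_1=y_1}\|_{L^2_{x'}}\leq \|w|_{x_1=t}\|_{L^2_{x'}}+Ch^{-1}|y_1-t|^{1/2}\bigl(\|Pw\|_{L^2}+h\|R_1w\|_{L^2}\bigr)
\]
for all $t$ in a small interval about $y_1$.

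I would then square the estimate, bound $(a+b)^2\leq 2(a^2+b^2)$, integrate $t$ over $I:=[y_1-\delta_0,y_1+\delta_0]$, and take a square root. The $t$-integration of $|y_1-t|$ produces $\delta_0^2$, while the $t$-integration of $\|w|_{x_1=t}\|_{L^2_{x'}}^2$ equals $\|w\|_{L^2(\text{slab})}^2\leq\|w\|_{L^2(\R^n)}^2$. After dividing through by $2\delta_0$, this gives an inequality of the shape in the lemma with $\|w\|_{L^2}$ and $\|Pw\|_{L^2}$ on the right. The scale $\delta_0=\delta|\partial_\xi p(x_0,\xi_0)|_g$ is forced because in our coordinates $H_p x_1=\partial_{\xi_1}p$ and $|\partial_{\xi_1}p(x_0,\xi_0)|=|\partial_\xi p(x_0,\xi_0)|_g$, so flow-time $\delta$ corresponds precisely to $x_1$-displacement $\delta_0$.

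The remaining step is to replace $\|w\|_{L^2}$ and $\|Pw\|_{L^2}$ by their counterparts with $b_{\e,x_0}(x,hD)$ inserted, pushing everything else into $o_{\e,\delta}(1)$. Since $Pu=o_{L^2}(h)$ forces the microlocal support of $u_\Sigma$ into $\{p=0\}$, the microlocal support of $w$ is contained in $\{|p|\leq\e\}\cap\{d(x,x_0)<2\e\}$ (for $\chi_0$ supported close enough to $(x_0,\xi_0)$), and by~(\ref{e:platypus1})--(\ref{e:platypus2}) this set lies inside $\{b_{\e,x_0}\equiv 1\}$. Standard microlocal arguments therefore give $w=b_{\e,x_0}(x,hD)w+O_{L^2}(h^\infty)$, and symbol calculus allows rewriting $b_{\e,x_0}(x,hD)(q\chi_{\e,x_0}\chi_0)(x,hD)u_\Sigma$ as $b_{\e,x_0}(x,hD)q(x,hD)\chi_0(x,hD)\chi_{\e,x_0}u_\Sigma$ modulo $O(h)$ commutator errors; since $0\leq\chi_{\e,x_0}\leq 1$, dropping this spatial multiplier on $u_\Sigma$ only increases the $L^2$ norm, yielding $\|w\|_{L^2}\leq\|b_{\e,x_0}q\chi_0 u_\Sigma\|_{L^2}+o_{\e,\delta}(1)$. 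The analogous chain of replacements treats the $\|Pw\|_{L^2}$ term, with $h\|R_1w\|_{L^2}$ contributing only to the $o_{\e,\delta}(1)$ error since $R_1$ is one order lower.

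The main obstacle will be the bookkeeping in this last step: making sure every symbol-calculus error, every commutator, and every reordering of $\chi_{\e,x_0}$ with the various quantizations combines into a genuine $o_{\e,\delta}(1)$ as $h\to 0$ (rather than leaving residues depending on $\e$ or $\delta$), and ensuring that the microlocal-support containment really does put $\mathrm{supp}(q\chi_{\e,x_0}\chi_0)\cap\mathrm{supp}(u_\Sigma)$ inside the region where $b_{\e,x_0}\equiv 1$. This will rely both on the freedom to shrink $U$ about $(x_0,\xi_0)$ and on the thickened-flowout construction~(\ref{e:platypus1})--(\ref{e:platypus2}) of $b_{\e,x_0}$.
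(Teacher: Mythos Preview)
Your overall strategy is reasonable and close in spirit to the paper's, but there is a genuine gap in the treatment of the $\|Pw\|_{L^2}$ term. You set $w=(q\chi_{\e,x_0}\chi_0)(x,hD)u_\Sigma$ with the spatial cutoff $\chi_{\e,x_0}$ \emph{inside} $w$, and then claim that ``the analogous chain of replacements'' lets you bound $h^{-1}\|Pw\|$ by $h^{-1}\|b_{\e,x_0}Pq\chi_0 u_\Sigma\|$ up to $o_{\e,\delta}(1)$. But when $P$ hits $\chi_{\e,x_0}$ you pick up the commutator $[P,\chi_{\e,x_0}]$, whose principal symbol $\tfrac{h}{i}H_p\chi_{\e,x_0}$ has size $h\e^{-1}$ because $\partial_x\chi_{\e,x_0}\sim\e^{-1}$. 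By the defect-measure identity (Lemma~\ref{vanish}), $h^{-2}\|Pw\|^2\to\int|H_p(q\chi_{\e,x_0}\chi_0)|^2d\mu$, and the cross term $q(H_p\chi_{\e,x_0})\chi_0$ contributes $\sim\e^{-2}\mu(\{\e<d(x,x_0)<2\e\})$, which for a generic flow-invariant $\mu$ is of order $\e^{-1}$. After the square root and the factor $\delta_0^{1/2}$, this yields a term of size $(\delta/\e)^{1/2}\gg 1$, not $o_{\e,\delta}(1)$. No amount of symbol-calculus bookkeeping removes this; it is a structural consequence of localizing $w$ to an $\e$-ball while integrating the energy estimate over a $\delta_0$-slab.

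The paper circumvents this by keeping $\chi_{\e,x_0}$ \emph{outside} of $w$: it sets $w=\chi_0 q(x,hD)u_\Sigma$, writes the explicit Duhamel representation $w(y_1,\cdot)=e^{-iA/h}w|_{x_1=t}-\tfrac{i}{h}\int e^{-iA/h}f$, and only then multiplies by $\chi_{\e,x_0}$ at the output point $(y_1,x')$. The cutoff $b_{\e,x_0}$ is inserted not by a static wavefront-set argument but by propagation of singularities for the unitary group $e^{-iA/h}$: one checks that the flow $\tilde G_t$ of $\xi_1-a(x,\xi')$ cannot carry $\supp(1-\tilde b_{\e,x_0})\cap\{|p|\lesssim\e^2\}$ into $\supp\chi_{\e,x_0}$ for $|t|\le\delta_0$, which is exactly how the flowed-out support condition~(\ref{e:platypus2}) on $b_{\e,x_0}$ is used. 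This dynamical step is the missing idea in your proposal. (A minor additional point: the sentence ``dropping this spatial multiplier on $u_\Sigma$ only increases the $L^2$ norm'' is false as stated, since a pseudodifferential operator applied to $\chi_{\e,x_0}u_\Sigma$ is not monotone in $\chi_{\e,x_0}$; you would need to commute $\chi_{\e,x_0}$ to the \emph{outside} first and then use $|\chi_{\e,x_0}|\le 1$.)
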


\begin{rem}
\begin{itemize}
\item[-]  In (\ref{UPSHOT 1}), the local defining functions $x_1$ depend on $j$, but we will abuse notation somewhat and suppress the dependence on the index.
\item[-] {Note that the constant $C_0$ may depend on $P$ and $M$ in unspecified ways. In order to remove this dependence in Theorem~\ref{thm:local}, we choose $\delta$ sufficiently small when applying Lemma~\ref{l:squirrel}.}
\end{itemize}
\end{rem}

\begin{proof}
Let 
$$
A(x_1,y_1,x',hD_{x'}):=-\int_{y_1}^{x_1}a(s,x',hD_{x'})ds
$$ 
and $w=\chi_0q(x,hD)u_{\Sigma}$. Then 
$$
w(y_1,x')=e^{-\frac{i}{h}A(t,y_1,x',hD_{x'})}w|_{x_1=t}\\
-\frac{i}{h}\int_{y_1}^{t}e^{-\frac{i}{h}A(s,y_1,x',hD_{x'})}f(s,x')ds
$$
where 
\begin{equation}
\label{e:f}
f(x):=E^{-1}(x,hD)P\chi_0 q(x,hD)u_{\Sigma} +hR_1\chi_0(x,hD)q(x,hD)u_{\Sigma}.
\end{equation}
{Moreover, since we have arranged that $a(s,x',hD_{x'})$ is self adjoint for each fixed $s$, $e^{-\frac{i}{h}A(s,y_1,x',hD_{x'})}$ is unitary.}

Let $\delta_0:=\delta |\partial_{\xi}p(x_0,\xi_0)|_g$ and $\psi\in C_c^\infty(\re;[0,1])$ with $\supp \psi \subset [0,\delta_0 ]$ and $\int \psi=1$. Then, integrating in ${t}$,
\begin{equation}
\label{e:squid2}
w(y_1,x')=\int \psi(t)e^{-\frac{i}{h}A(t,y_1,x',hD_{x'})}w|_{x_1=t}dt-\frac{i}{ h}\int \psi(t) \int_{y_1}^{t}e^{-\frac{i}{h}A(s,y_1,x',hD_{x'})}f(s,x')dsdt
\end{equation}

Now, let $\tilde{b}_{\e,x_0}$ satisfy
\red{\begin{equation}
\label{e:platypus3}
 \tilde{b}_{\e,x_0}\equiv 1\text{ on }{\Big\{q\,\big|\, d\Big(q,\bigcup_{t=-2\delta}^{2\delta}G_t\left\{(x,\xi)\mid  |p(x,\xi)|\leq\e, d(x,x_0)<2\e\right\}\Big)<\e/2\Big)\Big\}}.
 \end{equation}}
and have $\supp \tilde{b}_{\e,x_0}\subset \{b_{\e,x_0}\equiv 1\}.$ \red{This is possible by ~\eqref{e:platypus2}.}
{We next aim to prove}
\begin{equation}
\label{e:est0}
\begin{aligned} 
\chi_{\e,x_0}w(y_1,x')&=\int \psi(t) \chi_{\e,x_0} e^{-\frac{i}{h}A(t,y_1,x',hD_{x'})}({\tilde{b}}_{\e,x_0}(x,hD)w)|_{x_1=t}dt\\
&\qquad -\frac{i}{h}\chi_{\e,x_0} \int \psi(t)\int_{y_1}^{t}e^{-\frac{i}{h}A(s,y_1,x',hD_{x'})}({\tilde{b}}_{\e,x_0}(x,hD)f)(s,x')dsdt+o_{\e,\delta}(1)_{L^\infty_{y_1}L^2_{x'}}
\end{aligned}
\end{equation}
{To do this, we show that} for $q_1\in S^0(T^*M)$, $s\in[0,\delta_0]$
\begin{equation}
\label{e:squid}
\chi_{\e,x_0}(y_1,x')e^{-\frac{i}{h}A(s,y_1,x',hD_{x'})}({I}-{\tilde{b}}_{\e,x_0}(x,hD))\chi_0(x,hD)q_1(x,hD){u_{\Sigma}}=o_\e(h)_{L^2_x}.
\end{equation}
Let $\varphi\in C_c^\infty(\R)$ with $\varphi\equiv 1$ on $[-1,1]$. By~\eqref{e:elliptic} 
$$
\chi_{\e,x_0}(y_1,x')e^{-\frac{i}{h}A(x_1,x',hD_{x'})}({I}-{\tilde{b}}_{\e,x_0}(x,hD))\chi_0(x,hD)q_1(x,hD)({I}-\varphi(\e^{-2}p(x,hD))){u_{\Sigma}}=o_{\e}(h)_{L_x^2}.
$$
Therefore, we need only estimate
\begin{equation}
\label{e:elliptPart}
\chi_{\e,x_0}(y_1,x')e^{-\frac{i}{h}A(s,y_1,x',hD_{x'})}({I}-{\tilde{b}}_{\e,x_0}(x,hD))\chi_0(x,hD)q_1(x,hD)\varphi(\e^{-2}p(x,hD)){u_{\Sigma}}.
\end{equation}

{In order to estimate~\eqref{e:elliptPart}, we apply propagation of singularities for $e^{-\frac{i}{h}A}.$}
Let $\tilde{G}_t$ denote the Hamiltonian flow of $\xi_1-a(x,\xi')$. {We show that for $\delta$ small enough and $|t|\leq \delta_0$, 
\begin{equation}
\label{e:modifiedDyn}
\supp \chi_{\e,x_0}\cap \tilde{G}_t(\supp(1-\tilde{b}_{\e,x_0})\varphi(\e^{-2}p)\chi_0)=\emptyset.
\end{equation}
Since $\supp \psi \subset [0,\delta_0]$ propagation of singularities then implies that 
\begin{equation}
\label{e:propPart}
\psi(s)\chi_{\e,x_0}(y_1,x')e^{-\frac{i}{h}A(s,y_1,x',hD_{x'})}({I}-{\tilde{b}}_{\e,x_0}(x,hD)){q_1}(x,hD)\varphi(\e^{-2}p(x,hD)){u_{\Sigma}}=O_{\e}(h^\infty)_{L^2_x}.
\end{equation}
}

{We now prove~\eqref{e:modifiedDyn}. For $p(y_0,\eta_0)=0$, if $G_t(y_0,\eta_0)=(x_1(t),x'(t),\xi(t))$, then $\tilde{G}_{x_1(t)}(y_0,\eta_0)=(x_1(t),x'(t),\xi(t)).$ Since we assume that $\partial_{\xi'}p(x_0,\xi_0)=0$, $\partial_{\xi_1}p(x_0,\xi_0)\neq 0$ {we may choose $U$ small enough so that for $q\in U$
$$
\frac{2}{3}|\partial_{\xi}p(x_0,\xi_0)|_g\leq |\partial_{\xi_1}p(q)|\leq \frac{3}{2}|\partial_{\xi}p(x_0,\xi_0)|_g.
$$
Thus, for $q\in \supp\chi_0$, 
$$
\frac{2}{3}|\partial_{\xi}p(x_0,\xi_0)|_gt+O(t^2)\leq |x_1(G_t(q))-x_1(q)|\leq \frac{3}{2}|\partial_{\xi}p(x_0,\xi_0)|_gt+O(t^2)
$$ }
Now, suppose $q\in \{|p|\leq C\e^2\}\cap\supp \chi_0$ so that $\tilde{G}_t(q)\in \supp\chi_{\e,x_0}$ for some $|t|\leq \delta_0$. Then, there exists $t\in [-2\delta,2\delta]$ and $C>0$ such that
$$
d (x(G_t(q)), x_0)\leq \e +C\e^2.
$$ 
In particular, by~\eqref{e:platypus3}, ${\tilde{b}}_{\e,x_0}\equiv 1$ in a neighborhood of $q$ and hence $q\notin \supp (1-\tilde{b}_{\e,x_0}).$ In particular, this proves~\eqref{e:modifiedDyn} and hence~\eqref{e:propPart}.
}

Together~\eqref{e:elliptPart} and~\eqref{e:propPart} {give~\eqref{e:squid} and in particular, applying~\eqref{e:squid} with $q_1=1$ for the first term in~\eqref{e:squid2} and 
$$
q_1=E^{-1}(x,hD)P\chi_0 q(x,hD) +hR_1\chi_0(x,hD)q(x,hD).
$$
for the second term in~\eqref{e:squid2} gives}
 ~\eqref{e:est0}. {In turn,~\eqref{e:est0}} implies
\begin{equation*}
\|\chi_{\e,x_0} w(y_1,\cdot)\|_{L^2_{x'}(\R^{n-1})}\leq \delta_0^{-1/2}\|{\tilde{b}}_{\e,x_0}(x,hD)w\|_{L^2_x(\R^n)}+{C_0}\delta_0^{\frac{1}{2}}h^{-1}\|{\tilde{b}}_{\e,x_0}(x,hD)f\|_{L^2_x(\R^n)}+o_{\e,\delta}(1).
\end{equation*}

Now, 
$$
q(x,hD)\chi_{\e,x_0}\chi_0(x,hD)u_{\Sigma}=\chi_{\e,x_0}\chi_0(x,hD)q(x,hD)u_\Sigma+[q(x,hD),\chi_{\e,x_0}\chi_0(x,hD)]u_\Sigma.
$$
Therefore, {applying the Sobolev embedding~\cite[Lemma 7.10]{EZB} in 1 dimension}
$$
\|[q(x,hD),\chi_{\e,x_0} \chi_0(x,hD)]u_{\Sigma} (x_1, \cdot )\|_{L_{x'}^2(\R^{n-1})} = O_\e(h^{1/2}),
$$
we have the following $L^2$ bound along the section $x_1=y_1$ of $\supp \chi_0 \subset \supp\chi_{\Sigma}.$
\begin{multline} 
\label{local upshot 1}
\|q(x,hD)\chi_{\e,x_0}\chi_0(x,hD)u_{\Sigma}(y_1, \cdot) \|_{L^2_{x'}(\R^{n-1})}\leq \\\delta_0^{-1/2}\|{\tilde{b}}_{\e,x_0}(x,hD)w\|_{L^2_x(\R^n)}+{C_1}\delta_0^{1/2}h^{-1}\|{\tilde{b}}_{\e,x_0}(x,hD)f\|_{L^2_x(\R^n)}
 +o_{\e,\delta}(1). 
 \end{multline}
 
 {Observe that, 
 \begin{align*}
&\| {\tilde{b}}_{\e,x_0}(x,hD)f\|&\\
&\qquad\leq \|{\tilde{b}}_{\e,x_0}(x,hD)E^{-1}(x,hD)P\chi_0 q(x,hD)u_{\Sigma}\| +h\|{\tilde{b}}_{\e,x_0}(x,hD)R_1\chi_0(x,hD)q(x,hD)u_{\Sigma}\|\\
&\qquad\leq C_2\|b_{\e,x_0}(x,hD)P\chi_0(x,hD)q(x,hD)u_{\Sigma}\|+C_2h\|b_{\e,x_0}(x,hD)\chi_0(x,hD)q(x,hD)u_{\Sigma}\|+o_{\e,\delta}(h)
\end{align*}
and
$$
\|{\tilde{b}}_{\e,x_0}(x,hD)w\|_{L^2_x(\R^n)}\leq \|b_{\e,x_0}(x,hD)\chi_0(x,hD)q(x,hD)u_{\Sigma}\|+o_{\e,\delta}(1).
$$
Taking $\e_0>0$ so small so that for $\delta<\e_0$, $C_1C_2\delta_0^{1/2}\leq \delta_0^{-1/2}$, and letting $C_0=C_1C_2$ we have 
\begin{multline} 
\label{local upshot 2}
\|q(x,hD)\chi_{\e,x_0}\chi_0(x,hD)u_{\Sigma}(y_1, \cdot) \|_{L^2_{x'}(\R^{n-1})}\leq 2\delta_0^{-1/2}\|b_{\e,x_0}(x,hD)\chi_0(x,hD)q(x,hD)u_{\Sigma}\|_{L^2_x(\R^n)}\\+C_0\delta_0^{1/2}h^{-1}\|b_{\e,x_0}(x,hD)P\chi_0(x,hD)q(x,hD)u_{\Sigma}\|_{L^2_x(\R^n)}
 +o_{\e,\delta}(1). 
 \end{multline}
}
\end{proof}

\begin{lem}
\label{l:flowoutNeed}
Suppose that for some $\delta>0$, $q\in S^0(T^*M)$ has $q \equiv 0$ on $\Lambda_{x_0,3\delta}$. Then for $r(h)=o(1)$. 
$$
\limsup_{h\to 0} h^{\frac{n-1}{2}}\|q(x,hD)u_\Sigma\|_{L^\infty(B(x_0,r(h)))}=0.
$$
\end{lem}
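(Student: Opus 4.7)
The strategy is to use Lemma~\ref{l:squirrel} to reduce the pointwise bound on $q(x,hD)u_\Sigma$ over $B(x_0, r(h))$ to two $L^2$ estimates on microlocal cutoffs of $u_\Sigma$, and then apply Lemma~\ref{vanish} to convert these to integrals against the defect measure $\mu$. The hypothesis $q\equiv 0$ on $\Lambda_{x_0,3\delta}$, together with a short dynamical observation, will force both integrals to vanish in the limit $\e\to 0$.

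Concretely, I would first choose $\delta' \leq \min(\delta, \e_0)$ and construct a finite partition of unity $\{\chi_j\}_{j=1}^K$ on a neighborhood of $\Sigma_{x_0}$ via the partition-of-unity lemma opening Section~\ref{s:laplace}, with each $\chi_j$ supported in a tube small enough to fit inside the neighborhood $U_j$ of $(x_0,\xi_j)$ furnished by Lemma~\ref{l:squirrel}, and satisfying $H_p \chi_j \equiv 0$ on $\Lambda_{x_0,3\delta'}$. Writing $q(x,hD) u_\Sigma = \sum_j q(x,hD)\chi_j(x,hD) u_\Sigma + R$, the remainder is smoothing at $x_0$ because the symbol $1-\sum_j\chi_j$ vanishes on $\Sigma_{x_0}$, so $R = O_{L^\infty(B(x_0,r(h)))}(h^\infty)$. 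For each $j$, apply Lemma~\ref{l:squirrel} with $\chi_0 = \chi_j$ and follow it by semiclassical Sobolev embedding in $x'$ (valid modulo $O(h^\infty)$ thanks to compact microlocalization of $u_\Sigma$) to obtain an $L^\infty$ bound with loss $h^{-(n-1)/2}$. Since $\chi_{\e,x_0}\equiv 1$ on $B(x_0,\e)\supset B(x_0,r(h))$ for $h$ small, the left-hand side of Lemma~\ref{l:squirrel} controls $\|q\chi_j u_\Sigma\|_{L^\infty(B(x_0,r(h)))}$ up to $o(h^{(1-n)/2})$. Lemma~\ref{vanish} then identifies the resulting $L^2$ quantities in the limit $h\to 0$ as $\bigl(\int |b_{\e,x_0}|^2|q\chi_j|^2 \, d\mu\bigr)^{1/2}$ and $\bigl(\int |b_{\e,x_0}|^2 |H_p(q\chi_j)|^2 \, d\mu\bigr)^{1/2}$.

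The proof finishes by letting $\e\to 0$. By~\eqref{e:platypus1}, $\supp b_{\e,x_0} \cap \Sigma \subset \bigcup_{x\in B(x_0,3\e)} \Lambda_{x,3\delta'}$, and continuous dependence of the bicharacteristic flow on initial data (iterated from Lemma~\ref{l:flow}) ensures that this set Hausdorff-converges to the compact set $\Lambda_{x_0,3\delta'}$. On $\Lambda_{x_0,3\delta'}$, $q$ vanishes by hypothesis, and $H_p q$ vanishes as well: for any $\xi\in\Sigma_{x_0}$ the smooth function $s\mapsto q(G_s(x_0,\xi))$ is identically zero on $[-3\delta',3\delta']$, so its derivative vanishes throughout the closed interval, including at the boundary points $s=\pm 3\delta'$. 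Together with $H_p\chi_j \equiv 0$ on $\Lambda_{x_0,3\delta'}$, this yields $H_p(q\chi_j) \equiv 0$ on $\Lambda_{x_0,3\delta'}$, so continuity of $q$ and $H_p(q\chi_j)$ on the compact set combined with finiteness of $\mu$ kills both integrals as $\e\to 0$. The main subtlety to watch is the vanishing of $H_p q$ at the temporal boundary $|t|=3\delta'$ of $\Lambda_{x_0,3\delta'}$, where $H_p$ is not interior-tangent to the flowout in the naive sense; it is the smoothness of $q$, not just flow-tangency, that closes this gap.
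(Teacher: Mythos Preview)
Your proof is correct and follows the same strategy as the paper: localize via a partition of unity, apply Lemma~\ref{l:squirrel} and semiclassical Sobolev embedding, invoke Lemma~\ref{vanish}, and let $\e\to 0$ using that $q$ and $H_p(q\chi_j)$ vanish on $\Lambda_{x_0,3\delta}$. The only difference is the choice of partition: the paper uses the cover $\{\chi_j\}$ of $\supp\chi_\Sigma$ set up just before Lemma~\ref{l:squirrel} (so no remainder term arises), whereas your partition from Section~\ref{s:laplace} requires the off-$\Lambda_{x_0}$ extension of $\sum_j\chi_j$ to be chosen $\equiv 1$ on a full $T^*M$-neighborhood of $\Sigma_{x_0}$ inside $\supp\chi_\Sigma$---not merely on $\Sigma_{x_0}$ itself---for the claimed $O(h^\infty)$ remainder bound to hold.
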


\begin{proof}
Observe that Lemma~\ref{l:squirrel} gives for each $j=1,\dots N$, 
\begin{multline*}
\|(q\chi_{\e,x_0}\chi_j)(x,hD)u_{\Sigma}|_{x_1=y_1} \|_{L^2_{x'}(\R^{n-1})}\leq 2\delta_0^{-1/2}\|b_{\e,x_0}(x,hD)q(x,hD)\chi_j(x,hD)u_{\Sigma}\|_{L^2_x(\R^n)}\\
+{C_0}\delta_0^{\frac{1}{2}}h^{-1}\|b_{\e,x_0}(x,hD)Pq(x,hD)\chi_j(x,hD)u_{\Sigma}\|_{L^2_x(\R^n)}+o_{\e,\delta}(1).
\end{multline*}
{Observe that since $r(h)=o(1)$, for $h$ small enough, $\chi_{\e,x_0}\equiv 1$ on $B(x_0,r(h))$. Hence,} applying the Sobolev estimate \cite[Lemma 7.10]{EZB} and Lemma~\ref{vanish} gives
\begin{multline*}
\limsup_{h\to 0}h^{n-1}\|(q\chi_j)(x,hD)u_{\Sigma} \|^2_{L^\infty(B(x_0,r(h)))}\leq 2\delta_0^{-1}\int b^2_{\e,x_0}(x,hD)q^2(x,hD)\chi^2_jd\mu\\
+{C_0}\delta_0\int b^2_{\e,x_0}(x,hD)|H_p (q(x,hD)\chi_j)|^2d\mu.
\end{multline*}
Sending $\e\to 0$ and using the dominated convergence theorem proves the lemma since $\mu(T^*M)=1<\infty$, $\lim_{\e\to 0}b^2_{\e,x_0}\leq 1_{\Lambda_{x_0,3\delta}}$, {$H_p$ is tangent to $\Lambda_{x_0}$}, and {so the fact that} $q$ vanishes identically on $\Lambda_{\red{x_0},3\delta}$ {implies the same for $H_pq$}.
\end{proof}

\section{Decomposition into wave packets}

We now choose a convenient partition $\chi_j$ and functions $q_{j,i}$, $i=2,\dots n$ to prove the main theorem. The $\chi_j$ localize to individual bicharacteristics, and $\sum_i q_{j,i}$ will measure concentration in neighborhoods of each bicharacteristic. We then show that understanding the mass localization to finer and finer neighborhoods of geodesics yields the structure of the defect measure. 

\subsection{$L^\infty$ contributions near {bicharacteristics}}

We need the following version of the $L^\infty$ Sobolev embedding.

\begin{lem}
\label{l:improvedLinfty}
{There exists $C_{n,l}>0$ depending only on $n$ and $l$ so that for all} $v\in H^l(\re^{n-1})$ with $l>(n-1)/2$ and all $\e>0$ 
$$
\|v\|_{L^\infty}^2\leq C_{n,l}h^{-n+1}\Big(\e^{n-1}\|v\|_{L^2}^2+\e^{n-2l-1}\sum_{i=1}^{n-1}\|(hD_{x_i})^l v\|_{L^2}^2\Big).
$$
In particular this holds if $v$ is compactly microlocalized.
\end{lem}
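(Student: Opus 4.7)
The plan is to prove this by a semiclassical Fourier-side dyadic split, essentially a parameterized version of the standard Sobolev embedding $H^l \hookrightarrow L^\infty$ for $l > (n-1)/2$. Using Fourier inversion we write
$$
v(x) = (2\pi h)^{-(n-1)} \int_{\R^{n-1}} e^{ix\cdot\xi/h}\,\mathcal{F}_h v(\xi)\,d\xi,
$$
where $\mathcal{F}_h$ is the semiclassical Fourier transform, which satisfies Plancherel $\|\mathcal{F}_h v\|_{L^2} = (2\pi h)^{(n-1)/2}\|v\|_{L^2}$ and intertwines $hD_{x_i}$ with multiplication by $\xi_i$.

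The key step is to split the integral at $|\xi| = \e$. On the low-frequency piece $\{|\xi|\leq\e\}$, I would apply Cauchy--Schwarz with the volume factor:
$$
\int_{|\xi|\leq\e}|\mathcal{F}_h v(\xi)|\,d\xi \leq |B(0,\e)|^{1/2}\|\mathcal{F}_h v\|_{L^2} \leq C_n\,\e^{(n-1)/2}(2\pi h)^{(n-1)/2}\|v\|_{L^2}.
$$
On the high-frequency piece $\{|\xi|>\e\}$, I would insert and remove $|\xi|^l$ and use $l>(n-1)/2$ to make $\int_{|\xi|>\e}|\xi|^{-2l}\,d\xi \leq C_{n,l}\,\e^{n-1-2l}$ converge:
$$
\int_{|\xi|>\e}|\mathcal{F}_h v(\xi)|\,d\xi \leq C_{n,l}\,\e^{(n-1-2l)/2}\bigl\||\xi|^l \mathcal{F}_h v\bigr\|_{L^2}.
$$
Using $|\xi|^{2l} \leq c_{n,l}\sum_{i=1}^{n-1}|\xi_i|^{2l}$ and Plancherel converts the last norm to $(2\pi h)^{(n-1)/2}\bigl(\sum_i\|(hD_{x_i})^l v\|_{L^2}^2\bigr)^{1/2}$.

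Adding the two contributions, taking a supremum in $x$, dividing by $(2\pi h)^{n-1}$, and squaring then yields the claimed bound (with the clean absorption $(A+B)^2 \leq 2A^2+2B^2$). The compactly microlocalized case follows because the hypotheses clearly hold for any $v\in H^l$. I do not expect a genuine obstacle here — the only choice to be made is the split point, which is already dictated by the statement ($A = \e$) — but the small care needed is to keep track that all constants depend only on $n$ and $l$, and that the convergence $\int_{|\xi|>\e}|\xi|^{-2l}d\xi <\infty$ genuinely requires the assumption $l > (n-1)/2$.
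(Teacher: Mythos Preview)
Your proposal is correct and is essentially the same argument as the paper's: write $v$ via semiclassical Fourier inversion, split the frequency integral at scale $\e$, and apply Cauchy--Schwarz on each piece using Plancherel to convert back. The only cosmetic differences are that the paper uses a smooth cutoff $\zeta_\e(|\xi|)$ in place of your sharp indicator $1_{|\xi|\le\e}$, and it works directly with the weight $w_l(\xi)=\big(\sum_i\xi_i^{2l}\big)^{1/2}$ rather than $|\xi|^l$ followed by the norm-equivalence $|\xi|^{2l}\le c_{n,l}\sum_i\xi_i^{2l}$.
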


\begin{proof}
Let $\zeta\in C_c^\infty([-2,2])$ with $\zeta\equiv 1$ on $[-1,1]$ and $\zeta_\e(x)=\zeta(\e^{-1}x).$ 

Then 
\begin{align*} 
v(x)&=(2\pi h)^{-n+1}\int e^{i\langle x,\xi\rangle/h}[\zeta_\e(|\xi|)+(1-\zeta_\e(|\xi|))]\mc{F}_h(v)(\xi)d\xi
\end{align*}
Applying the triangle inequality and Cauchy--Schwarz, and letting $w_l(\xi)=\sqrt{\sum_{i=1}^{n-1}\xi_i^{2l}}$
\begin{align}
\|v\|_{L^\infty}^2&\leq h^{-2(n-1)}(\e^{n-1}\|\zeta\|_{L^2}^2\|\mc{F}_h v\|_{L^2}^2+\|(1-\zeta_\e)w_l^{-1}\|_{L^2}^2\|w_l\mc{F}_hv\|_{L^2}^2)\label{e:sobolevEst}
\end{align}
Now, 
\begin{gather*}
\|(1-\zeta_\e)w_l^{-1}\|_{L^2}^2=\e^{n-2l-1}\|(1-\zeta)w_l^{-1}\|_{L^2}^2\\
\|w_l\mc{F}_hv\|_{L^2}^2= \int \sum_{i=1}^{n-1} \xi_i^{2l}|\mc{F}_hv(\xi)|^2d\xi=\sum_{i=1}^{n-1}\|\mc{F}_h(hD_{x_i}^lv)\|_{L^2}^2.
\end{gather*} 
Using this in~\eqref{e:sobolevEst} {together with the fact that by Parseval's theorem for $u\in L^2$, $\|\mc{F}_hu\|_{L^2}=(2\pi h)^{\frac{n-1}{2}}\|u\|_{L^2}$} proves the Lemma.
\end{proof}

\begin{lem}
\label{l:octopus}
There exists $C_n>0$ depending only on $n$, $\delta_1>0$ {and $r_0>0$} so that for $0<\delta<\delta_1$ if $(x_0,\xi_{{j}})\in \Sigma_{x_0}$, $0<r<r_0$ and $\chi_j\in C_c^\infty(T^*M)$ with 
\begin{equation*}
\supp \chi_j \cap \Lambda_{\red{x_0}} \subset T_{\red{4\delta}}(\xi_{{j}},r),\qquad H_p\chi _j\equiv 0,\text{ on }\Lambda_{x_0,3\delta}
\end{equation*}
where $T_{\red{4\delta}}(\xi_{{j}},r)$ is as in~\eqref{e:tubes}.
 Then
\begin{equation}
\label{e:optimized}
\limsup_{h\to 0}h^{n-1}\|\chi_j u_\Sigma\|_{L^\infty(B(x_0,r(h)))}^2\leq C_{n}\delta^{-1} |\partial_{\xi}p({x_0},{\xi_j})|_g^{-1}{r^{n-1}}\int_{\Lambda_{x_0,3\delta}}\chi_j^2d\mu.
\end{equation}
\end{lem}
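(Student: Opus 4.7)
The strategy is to bound $\chi_j(x,hD) u_\Sigma$ pointwise on $B(x_0, r(h))$ by combining the frequency-localized Sobolev embedding in Lemma~\ref{l:improvedLinfty} on transverse slices with the slice $L^2$ estimate from Lemma~\ref{l:squirrel}, then converting the resulting $L^2$ norms into integrals against the defect measure via Lemma~\ref{vanish}. The factor $\delta_0^{-1}=\delta^{-1}|\partial_{\xi}p|_g^{-1}$ will arise from the $t$-integration scale in the energy estimate, while the factor $r^{n-1}$ will come from the transverse momentum size of $\chi_j$ through the Sobolev step after optimizing its frequency scale $\e$.

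Concretely, I would work in the local coordinates of Lemma~\ref{l:squirrel} in which $p=\xi_1 - a(x,\xi')$, so $x_1$ plays the role of a time parameter along the flow. For $y=(y_1,y')\in B(x_0, r(h))$ and $h$ small the position cutoff satisfies $\chi_{\e,x_0}(y)=1$, so it suffices to bound the slice function $v_{y_1}(x') := (\chi_{\e,x_0}\chi_j(x,hD)u_\Sigma)(y_1,x')$ in $L^\infty_{x'}$. After conjugating by the unimodular phase $e^{-i\langle \xi_{j,0}', x'\rangle/h}$ (which preserves $L^p$ norms and replaces $hD_{x_i}$ by $hD_{x_i}-\xi_{j,i,0}$), I would apply Lemma~\ref{l:improvedLinfty} at scale $\e = r$ with some fixed large $l$:
$$
\|v_{y_1}\|_{L^\infty_{x'}}^2 \leq C_{n,l}h^{-(n-1)}\Bigl(r^{n-1}\|v_{y_1}\|_{L^2_{x'}}^2 + r^{n-2l-1}\sum_{i=2}^n\|(hD_{x_i}-\xi_{j,i,0})^l v_{y_1}\|_{L^2_{x'}}^2\Bigr).
$$
Each $L^2_{x'}$ norm on the right is then controlled by Lemma~\ref{l:squirrel} with $q=1$ and $q = q_i(x,\xi):=(\xi_i-\xi_{j,i,0})^l\phi(\xi)$ respectively, where $\phi$ is a cutoff covering the microlocal support of $u$. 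Lemma~\ref{vanish} converts the squared norms on the right-hand side of Lemma~\ref{l:squirrel} into $\int b_{\e,x_0}^2 q^2\chi_j^2\, d\mu$ (main term, paired with $\delta_0^{-1}$) and $\int b_{\e,x_0}^2|H_p(q\chi_j)|^2\,d\mu$ (error term, paired with $\delta_0$), after which I would let $\e\to 0$.

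Two features of the hypotheses make the bounds sharp. First, the invariance $H_p\chi_j\equiv 0$ on $\Lambda_{x_0,3\delta}$ combined with the concentration of $b_{\e,x_0}$ on $\Lambda_{x_0,3\delta}\cap\Sigma$ as $\e\to 0$, together with $\mu\subset\Sigma$, kills the $q H_p\chi_j$ part of $H_p(q\chi_j)$ in the limit. Second, because $\supp\chi_j$ lies in the bicharacteristic tube $T_{4\delta}(\xi_j,r)$, one has $|\xi-\xi_{j,0}|=O(r)$ on the portion of $\supp\chi_j\cap\Sigma$ seen by the thin $\delta_0$-slab integration of Lemma~\ref{l:squirrel} near $x_0$; hence $|q_i|\leq C r^l$ and $\int b_{\e,x_0}^2 q_i^2\chi_j^2\, d\mu \leq C r^{2l}\int b_{\e,x_0}^2\chi_j^2\, d\mu$. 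Inserting these into the Sobolev estimate makes both terms of size $r^{n-1}\delta_0^{-1}\int \chi_j^2 d\mu$, and sending $\e\to 0$ converts $b_{\e,x_0}^2\chi_j^2\, d\mu$ to $\chi_j^2\, d\mu|_{\Lambda_{x_0,3\delta}}$, yielding the claimed inequality.

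The main obstacle is bookkeeping the residual commutator $\chi_j H_p q_i$ in $H_p(q_i\chi_j)$, which does not vanish as $\e\to 0$ and carries the product of the $\delta_0^{1/2}h^{-1}$ factor from Lemma~\ref{l:squirrel} with the $r^{(n-2l-1)/2}$ factor from the Sobolev bound. One must verify that after squaring, the resulting error of order $\delta_0 r^{n-3}$ is dominated by the leading $\delta_0^{-1}r^{n-1}$ term, which is where the smallness of $\delta_1$ relative to $r_0$ enters. A secondary technical point is justifying the bound $|\xi-\xi_{j,0}|\leq Cr$ (not $C(r+\delta)$) on the part of $\supp\chi_j$ contributing to the slice-integrated measure: this uses the localization of Lemma~\ref{l:squirrel}'s integration in $t$ to a slab of width $\delta_0$ centered near $x_0$, together with the spreading estimate of Lemma~\ref{l:flow}.
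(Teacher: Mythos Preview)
Your overall architecture is correct and matches the paper's: apply Lemma~\ref{l:squirrel} on slices with $q=1$ and with suitable $q_i$, convert via Lemma~\ref{vanish}, send $\e\to 0$, and feed into the Sobolev embedding Lemma~\ref{l:improvedLinfty}. The genuine gap is in your choice of the symbols $q_i$.

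You take $q_i=(\xi_i-\xi_{j,i,0})^l$ with a \emph{constant} momentum center $\xi_{j,0}$, and then claim $|\xi-\xi_{j,0}|\leq Cr$ on the relevant region. This is not true. After Lemma~\ref{vanish} and $\e\to 0$, the integrals on the right of \eqref{UPSHOT 1} live on $\Lambda_{x_0,3\delta}\cap\supp\chi_j\subset T_{4\delta}(\xi_j,r)$, a tube of length $\sim\delta$ in the flow direction. Along $G_t$ the momentum drifts by $t\,\partial_x p+O(t^2)$, so on that tube one only has $|\xi-\xi_{j,0}|=O(r+\delta)$. Your proposed justification, that the integration in Lemma~\ref{l:squirrel} is confined to a slab of width $\delta_0$ near $x_0$, misreads the lemma: the $\delta_0$ enters only through the prefactors $\delta_0^{\pm 1/2}$ from the $t$-averaging; the right-hand side is a full $L^2_x$ norm, and $b_{\e,x_0}$ localizes to $\Lambda_{x_0,3\delta}$, not to a thin slab near $\Sigma_{x_0}$. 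Your fallback, imposing $\delta_1\ll r_0$, is backwards: in the application (Section~\ref{s:proof}) the lemma is used with $r=r_j\to 0$ and $r=\gamma\to 0$ at \emph{fixed} $\delta$, so you need the bound uniformly as $r\to 0$; with your $q_i$ the derivative term in the Sobolev estimate is $r^{n-2l-1}\delta_0^{-1}(r+\delta)^{2l}$, which blows up.

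The fix is exactly what the paper does: center at the moving bicharacteristic. Choose $a_{j,i}(x_1)$ so that $\xi_i-a_{j,i}(x_1)$ vanishes on the bicharacteristic through $(x_0,\xi_j)$ and set $q_{j,i}=(\xi_i-a_{j,i}(x_1))^l$; correspondingly conjugate by $e^{-i\langle x',a_j(x_1)\rangle/h}$ before applying Lemma~\ref{l:improvedLinfty}. Then Lemma~\ref{l:flow} gives $|q_{j,i}|\leq Cr^l$ on $T_{4\delta}(\xi_j,r)\cap\Lambda_{x_0,3\delta}$, and since $H_p(\xi_i-a_{j,i}(x_1))$ vanishes on the bicharacteristic one also gets $|H_pq_{j,i}|\leq Cr^l$ there. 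With this choice both terms in the Sobolev bound collapse to $C_n\delta_0^{-1}r^{n-1}\int_{\Lambda_{x_0,3\delta}}\chi_j^2\,d\mu$ after taking $\alpha=r$, with $\delta_1$ and $r_0$ chosen independently (only to control lower-order flow errors), and your ``main obstacle'' term disappears rather than needing $\delta\lesssim r$.
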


\begin{proof}
Let $a_{j,i}(x_1)$, $i=2,\dots n$ so that $\xi_i-a_{j,i}(x_1)$ vanishes on the bicharacteristic emanating from $({x_0},\xi_j)$. This is possible since we have chosen coordinates so that $\partial_{\xi_1}p(x_0,\xi_j)\neq 0$ and hence a bicharacteristic may be written locally as
$$
\gamma=\{(x,\xi)\mid x_1\in (-3\delta,3\delta),\,x'=x'(x_1),\,\xi=a(x_1)\}.
$$

Let $2l>n-1$ and $q_{j,i}=(\xi_i-a_i(x_1))^l$. Then, using $q=q_{j,i}$ in~\eqref{UPSHOT 1} gives
\begin{multline*} 
\|(hD_{x_i}-a_i(x_1))^l \chi_{\e,x_0}\chi_j(x,hD)u_{\Sigma}(x_1, \cdot) \|_{L^2_{x'}(\R^{n-1})}\leq 2\delta_0^{-1/2}\|b_{\e,x_0}(x,hD)q_{j,i}(x,hD)\chi_j(x,hD)u_{\Sigma}\|_{L^2_x(\R^n)}\\
+{C_0}\delta_0^{1/2}h^{-1}\|b_{\e,x_0}(x,hD)Pq_{j,i}(x,hD)\chi_j(x,hD)u_{\Sigma}\|_{L^2_x(\R^n)}+o_{\e,\delta}(1)
\end{multline*}
where $|\partial_\xi p|_g=|\partial_\xi p\cdot \partial_x|_g.$
Next, $q=1$ in~\eqref{UPSHOT 1} gives
$$
\|\chi_{\e,x_0}\chi_j u_{\Sigma}\|_{L_{x'}^2}\leq 2\delta_0^{-1/2}\|b_{\e,x_0}(x,hD)\chi_j(x,hD)u_{\Sigma}\|_{L^2_x(\R^n)}
+{C_0}\delta_0^{1/2}h^{-1}\|b_{\e,x_0}(x,hD)P\chi_j u_{\Sigma}\|_{L^2_x(\R^n)}+o_{\e,\delta}(1).
$$
Therefore, letting $w=e^{-i\langle x',a_j(x_1)\rangle/h}\chi_{\e,x_0} \chi_j u$ with $a_j(x_1)=(a_{j,2}(x_1),\dots,a_{j,n}(x_1))$ we see that 
$$
\|(hD_{x_i})^lw\|_{L^2_{x'}}\leq 2\delta_0^{-1/2}\|b_{\e,x_0}q_{j,i}\chi_ju_{\Sigma}\|_{L^2_x(\R^n)}
+{C_0}\delta_0^{1/2}h^{-1}\|b_{\e,x_0}Pq_{j,i}\chi_ju_{\Sigma}\|_{L^2_x(\R^n)}+o_{\e,\delta}(1)
$$
and 
$$
\|w\|_{L_{x'}^2}\leq 2\delta_0^{-1/2}\|b_{\e,x_0}\chi_ju_{\Sigma}\|_{L^2_x(\R^n)}+{C_0}\delta_0^{1/2}h^{-1}\|b_{\e,x_0}P\chi_j u_{\Sigma}\|_{L^2_x(\R^n)}+o_{\e,\delta}(1).
$$

Applying Lemma~\ref{l:improvedLinfty} to $w$ (with $\e=\alpha$) and using the fact that $\|w\|_{L^\infty}=\|\chi_{\e,x_0}\chi_ju_{\Sigma}\|_{L^\infty}$ gives for any $\alpha>0$ and $r(h)=o(1)$
\begin{multline*}
\limsup_{h\to 0}h^{n-1}\|\chi_j u_{\Sigma}\|^2_{L^\infty(B(x_0,r(h))}\leq C_{n,l}\alpha^{n-1}\left(\limsup_{h\to 0}\Big[{4}\delta_0^{-1}\|b_{\e,x_0}\chi_ju_\Sigma\|_{L^2_{x}}^2+{C_0^2}\delta_0 h^{-2}\|b_{\e,x_0}P\chi_ju_{\Sigma}\|_{L^2_{x}}^2\Big]\right)\\
\qquad +C_{n,l}\alpha^{n-2l-1}\left(\sum_{i=2}^{n}\limsup_{h\to 0}\Big[{4}\delta_0^{-1}\|b_{\e,x_0}q_{j,i}\chi_ju_\Sigma\|_{L^2_{x}}^2+{C_0^2}\delta_0 h^{-2}\|b_{\e,x_0}Pq_{j,i}\chi_ju_{\Sigma}\|_{L^2_{x}}^2\Big]\right)
\end{multline*}

In particular, applying Lemma~\ref{vanish}, 
\begin{align*}
\limsup_{h\to 0}h^{n-1}\|\chi_j u_\Sigma\|_{L^\infty(B(x_0,r(h)))}^2&\leq C_{n,l}\alpha^{n-1} \int  b_{\e,x_0}^2({4}\delta_0^{-1}\chi_j^2+{C_0^2}\delta_0|H_p\chi_j|^2)d\mu\\
&\qquad +C_{n,l}\alpha^{n-2l-1}\sum_{i=2}^{n}\int b_{\e,x_0}^2 ({4}\delta_0^{-1}\chi_j^2q_{j,i}^2+{C_0^2}\delta_0|H_p\chi_jq_{i,j}|^2)d\mu.
\end{align*}
Observe that \red{by~\eqref{e:platypus1} together with $0\leq b_{\e,x_0}^2\leq 1$, we have }
$$
\lim_{\e\to 0} b_{\e,x_0}^2\leq 1_{\Lambda_{x_0,3\delta}}.
$$
Sending $\e\to 0$ and using $H_p\chi_j=0$ on $\Lambda_{x_0,3\delta}$ (together with $\mu(T^*M)=1$ to apply the dominated convergence theorem) we have
\begin{equation}
\label{e:prelimEsta}
\begin{aligned}
\limsup_{h\to 0}h^{n-1}\|\chi_j u_\Sigma\|_{L^\infty(B(x_0,r(h)))}^2&\leq C_{n,l}{4}\delta_0^{-1}\alpha^{n-1} \int_{\Lambda_{x_0,3\delta}}  \chi_j^2d\mu\\
&\qquad +C_{n,l}\alpha^{n-2l-1}\sum_{i=2}^{n}\int_{\Lambda_{x_0,3\delta}} \chi_j^2({4}\delta_0^{-1}q_{j,i}^2+{C_0^2}\delta_0|H_pq_{i,j}|^2)d\mu
\end{aligned}
\end{equation}

Now, $\chi_j$ is supported on $T_{\red{4\delta}}(\xi,r)$ (see~\eqref{e:tubes}). Letting $\gamma$ be the bicharacteristic through $(x,\xi)$, we have by~\eqref{e:flow-1} that {for $\delta<\red{\frac{\delta_{M,p}}{3}}$ and $r<C_1^{-1}$ small enough}
$$
\sup\{d((x,\xi_1),\gamma)\mid (x,\xi_1)\in T_{\red{4\delta}}(\xi,r)\cap \Lambda_{x_0,3\delta}\}\leq 3r.
$$
Hence, since {$H_p(\xi_i-a_i(x_1))\equiv 0$ on $\gamma$, $H_pq_{j,i}=l(\xi_i-a_i(x_1))^{l-1}H_p(\xi_i-a_i(x_1))$ vanishes to order $l$ on $\gamma$ and there exists $C_2>0$ so that }
$$
\sup_{T_{\red{4\delta}}(\xi,r)\cap \Lambda_{x_0,3\delta}}|H_pq_{j,i}|\leq {C_2}r^l.
$$
Furthermore, by~\eqref{e:flow-1} {for $\red{\eta}\in B(\xi,r)$, and~\eqref{e:flow-2}
$$
|\xi_i(G_t(x_0,\red{\eta}))-\xi_i(G_t(x_0,\xi))|\leq 2r+C_1r^2,\qquad |a_i(x_1(G_t(x_0,\red{\eta})))-a_i(x_1(G_t(x_0,\xi)))|\leq C_3C_1\delta r.
$$}
Therefore,
$$
\sup_{T_{\red{4\delta}}(\xi,r)\cap \Lambda_{x_0,3\delta}}|q_{j,i}|\leq r^l(2+{C_1C_3}\delta+{C_1}r)^l
$$
{In particular, 
$$
\chi_j^2({4}\delta_0^{-1}q_{j,i}^2+{C_0^2}\delta_0|H_pq_{i,j}|^2)\leq r^{2l}\Big[4\delta_0^{-1}\big(2+C_1C_3\delta+C_1r\big)^{2l}+\delta_0C_0^2C_2^2\Big]\chi_j^2
$$
Thus, letting $\delta_1<{\min(C_1^{-1}C_3^{-1},(C_0C_2\sup|\partial_{\xi}p|_g)^{-1},\tilde{\delta})}$ and ${r_0<C_1^{-1}}$  we obtain 
$$
\chi_j^2({4}\delta_0^{-1}q_{j,i}^2+{C_0^2}\delta_0|H_pq_{i,j}|^2)\leq \delta_0^{-1}r^{2l}\big(1+4^{2l+1}\big)\chi_j^2.
$$
Using this in}~\eqref{e:prelimEsta} that
\begin{align*}
\limsup_{h\to 0}h^{n-1}\|\chi_j u_\Sigma\|_{L^\infty(B(x_0,r(h)))}^2&\leq C_{n,l}\delta_0^{-1}\int_{\Lambda_{x_0,3\delta}}\chi_j^2( {4}\alpha^{n-1} +\alpha^{n-2l-1}{(n-1)(4^{2l+1} +1)}r^{2l})d\mu.
\end{align*}
{Choosing $\alpha =r$ }and fixing $l=n$ gives~\eqref{e:optimized}.
\end{proof}

We now find an appropriate cover of $\Lambda_{x_0}$ that is adapted to $\mu_{\red{x_0}}$. 

\subsection{Decomposition of $\Lambda_{x_0}$}

\label{s:proof}

\begin{proof}[Proof of Theorem~\ref{thm:local}]
Recall that 
$$
\mu_{x_0}=\rho_{x_0} +fd\mc{H}^n_{x_0}
$$
 where $\rho_{x_0}\perp \mc{H}^n_{x_0}$ and $\mu_{x_0}$ is invariant under $G_t$. Therefore, by Lemma~\ref{l:invariance}, $\rho_{x_0}$ and $fd\mc{H}^n_{x_0}$ are invariant under $G_t$. 

Fix $0<\e \ll \delta$ arbitrary. By Lemma~\ref{l:tubes}, \red{with $\delta$ replaced by $4\delta$} there exist $((x_0,\xi_j),r_j)\in \Sigma_{x_0}\times \R_+$ satisfying~\eqref{e:mucond}. Let $K$ be large enough so that 
\begin{equation}
\label{e:foot}
\rho_{x_0}\left(\Lambda_{x_0,\red{4\delta}}\setminus \bigcup_{j=1}^K T_{\red{4\delta}}(\xi_j,r_j)\right)<\e.
\end{equation}
Let $\chi_j\in C_c^\infty(T^*M;[0,1])$ satisfy~\eqref{e:partition} for $((x_0,\xi_j), r_j)$ $j=1,\dots K$.

Define $\psi=1-\sum\chi_j$. Applying Lemma~\ref{l:octopus} (with $\xi=\xi_j$, $r=r_j$, $\chi=\chi_j$), summing and using the triangle inequality, we have
\begin{equation}
\label{e:singularPart}
\begin{aligned}
\limsup_{h\to 0}h^{\frac{n-1}{2}}\|(1-\psi(x,hD)) u_{\Sigma}\|_{L^\infty(B(x_0,r(h)))}&\leq C_{n,\delta} \sum_{j=1}^K r_j^{(n-1)/2}\left(\int_{\Lambda_{x_0}}\chi_j^2d\mu\right)^{1/2}\\
&\leq C_{n,\delta} \left(\sum_j r_j^{n-1}\right)^{1/2}\left(\int _{\Lambda_{x_0}}\sum_j\chi_j^2d\mu\right)^{1/2}\\
&\leq C_{n,\delta}\e^{1/2}\mu(\Lambda_{x_0,\red{4\delta}})
\end{aligned} 
\end{equation}
where in the last line we use $0\leq \chi_j\leq 1$ and $0\leq \sum \chi_j\leq 1$. 

Next we estimate  $\psi(x,hD)u_\Sigma$.  By the Besicovitch--Federer Covering Lemma \cite[Theorem 1.14, Example (c)]{Hein01}, there exists a constant $C_n$ depending only on $n$ and $\gamma_0=\gamma_0(\Sigma_{x_0})$ so that for all $0<\gamma<\gamma_0$, there exists $\xi_1,\dots \xi_{N(\gamma)}$ with $N(\gamma)\leq C\gamma^{1-n}$ so that 
$$
\Sigma_{x_0}\subset \bigcup_{j=1}^{N(\gamma)}B(\xi_k,\gamma)
$$ 
and  each point in $\Sigma_{x_0}$ lies in at most $C_n$ balls $B(\xi_k,\gamma)$. Let $\psi_k$,  $k=1,\dots N(\gamma)$ satisfy~\eqref{e:partition},~\eqref{e:partition2} (with $\xi_j=\xi_k$, $2r_j=\gamma$, and $K=N(\gamma)$).  Observe that applying Lemma~\ref{l:octopus} (with $\xi=\xi_k$, $r=\gamma$, and $\chi_j=\psi_k\psi$),
\begin{align*} 
\limsup_{h\to 0}h^{n-1}\|\psi(x,hD)\psi_k(x,hD) u_\Sigma\|^2_{L^\infty(B(x_0,r(h)))}&\leq C_{n}\delta^{-1}|\partial_{\xi}p(x_0,\xi_k)|_g^{-1}\int_{\Lambda_{x_0,3\delta}}\psi_k^2\psi^2\gamma^{n-1}d\mu
\end{align*}

Notice that 
$$
\sum_k\psi \psi_k \equiv 1 \text{ on } \Lambda_{x_0,3\delta}
$$
and therefore Lemma~\ref{l:flowoutNeed} implies 
$$
\limsup_{h\to 0}h^{\frac{n-1}{2}}\Big\|\psi(x,hD)\Big[1-\sum_{k}\psi_k(x,hD)\Big]u_\Sigma\Big\|_{L^\infty(B(x_0,r(h)))}=0.
$$
So, applying the triangle inequality,
\begin{align*} 
&\limsup_{h\to 0}h^{\frac{n-1}{2}}\Big\|\psi(x,hD)u_\Sigma\Big\|_{L^\infty(B(x_0,r(h)))}\\
&\quad\leq C_{n,\delta}\sum_k\left(\int_{\Lambda_{x_0,3\delta}}\psi_k^2\psi^2\gamma^{n-1}d\rho_{x_0}\right)^{1/2}+C_n\delta^{-1/2}\sum_k\left(\int_{\Lambda_{x_0,3\delta}}|\partial_{\xi}p(x_0,\xi_k)|_g^{-1}\psi_k^2\psi^2\gamma^{n-1}fd\mc{H}^n_{x_0}\right)^{1/2}\\
&\quad=:C_{n,\delta}I+II
\end{align*} 
Use~\eqref{e:foot} to estimate
\begin{align*} 
I&\leq C\gamma^{\frac{n-1}{2}}N(\gamma)^{1/2}\left(\int_{\Lambda_{x_0,3\delta}}\sum_k\psi_k^2\psi^2d\rho_{x_0}\right)^{1/2}\leq C\rho_{x_0}\left(\Lambda_{x_0,3\delta}\setminus \bigcup_{j=1}^KT_{\red{4\delta}}(\xi_j,r_j)\right)^{1/2}\leq C\e^{1/2}.
\end{align*}

{Now, using that $fd\mc{H}^n_{\red{x_0}}$ is $G_t$ invariant and applying Lemma~\ref{l:inv}
\begin{align*}
II&\leq C_n\sum_k\left(\int_{\Sigma_{x_0}}|\partial_{\xi}p(x_0,\xi_k)|_g^{-1}\psi_k^2\psi^2\gamma^{n-1}f(0,q)|\nu(H_p)|(0,q)d\vol_{\Sigma_{x_0}}\right)^{1/2}.
\end{align*}
Define for $0\leq \theta \in L^1(\vol_{\Sigma_{x_0}})$
$$
T_\gamma \theta:=C_n\sum_k\left(\int_{\Lambda_{x_0,3\delta}}|\partial_{\xi}p(x_0,\xi_k)|_g^{-1}\psi_k^2\psi^2\gamma^{n-1}\theta(0,q)|\nu(H_p)|(0,q)d\vol_{\Sigma_{x_0}}\right)^{1/2}.
$$
Then,
\begin{align*}
T_\gamma \theta&\leq C_nN(\gamma)^{\frac{1}{2}}\gamma^{\frac{n-1}{2}}\left(\int \sum_k|\partial_{\xi}p(x_0,\xi_k)|_g^{-1}\psi_k^2\psi^2\theta(0,q)|\nu(H_p)|(0,q)d\vol_{\Sigma_{x_0}}\right)^{1/2}
&\leq C\|\theta\|_{L^1}^{1/2}
\end{align*}
where $C$ is independent of $\gamma$. 

Now, suppose that $\theta\geq 0$ is continuous. For $\gamma$ small enough, $C_n^{-1} \gamma^{n-1}\leq \vol_{\Sigma_{\red{x_0}}}(B(\xi_k,\gamma))\leq C_n \gamma^{n-1}$, where $C_n$ depends only on $n$,
\red{To see this, recall that on any compact Riemannian manifold $\tilde{M}$ of dimension $n-1$, there is $C_n$ depending only on $n$ and $C>0$ depending on $\tilde{M}$ so that for all $q\in \tilde{M}$}
$$
\red{|\vol_{\tilde{M}}(B(q,\gamma))-C_n\gamma^{n-1}|\leq C\gamma^{n}.}
$$
\red{This follows from computing in geodesic normal coordinates. In fact, $C$ depends only on bounds on the curvature of $\tilde{M}$.}

\red{Using this, we have}
\begin{align*} 
T_\gamma \theta&\leq C_n\int_{\Sigma_{x_0}}\sum_k 1_{B(\xi_k,\gamma)}\left(\frac{1}{|\partial_{\xi}p(x_0,\xi_k)|_g\vol_{\Sigma_{x_0}}(B(\xi_k,\gamma))}\int_{B(\xi_k,\gamma)} \theta(0,q)|\nu(H_p)|(0,q)d\vol_{\Sigma_{x_0}}\right)^{1/2}d\vol_{\Sigma_{x_0}}
\end{align*}
Now, as discussed in Remark~\ref{r:compact}, we may assume $\Sigma_{x_0}$ is compact. Then, since \red{$\theta$, the metric, $g$, and $p$ are} continuous, \red{they are} uniformly continuous. In particular, for any $\e_0>0$ there exists $\gamma$ small enough so that for all $\xi\in \Sigma_{x_0}$ and $q_0\in B(\xi,\gamma)$, 
$$
\left(\frac{1}{|\partial_{\xi}p(x_0,\xi)|_g\vol_{\Sigma_{x_0}}(B(\xi,\gamma))}\int_{B(\xi,\gamma)} \theta(0,q)|\nu(H_p)|(0,q)d\vol_{\Sigma_{x_0}}\right)^{1/2}\leq \sqrt{\frac{\theta|\nu(H_p)|}{|\partial_{\xi}p|_g}}(q_0)+\frac{\e_0}{\vol_{\Sigma_{x_0}}}.
$$
Thus, 
$$T_\gamma \theta\leq C_n\int \sqrt{\frac{\red{\theta}|\nu(H_p)|}{|\partial_{\xi}p|_g}}d\vol_{\Sigma_{x_0}}+C_n\e_0.$$
}

{
Next, let $\theta_m\geq 0$ continuous with $\theta_m\to f$ in $L^1$. We may assume by taking a subsequence that $\theta_m\to f$ a.e. Fix $\e_0>0$. Then since $\sqrt{a+b}\leq \sqrt{a}+\sqrt{b}$,
$$
|T_\gamma f|\leq T_\gamma|f-\theta_m|+T_\gamma \theta_m\leq C\|f-\theta_m\|_{L^1}^{1/2}+T_\gamma \theta_m.
$$
For $m\geq M$, $C\|f-\theta_m\|_{L^1}^{1/2}\leq \e_0$ and hence
$$
|T_\gamma f|\leq \e_0+T_\gamma \theta_m.
$$
Now, 
\begin{align*}
\int \sqrt{\frac{\theta_m|\nu(H_p)|}{|\partial_{\xi}p|_g}}d\vol_{\Sigma_{x_0}}&=\int \Big[\sqrt{\frac{\max(\theta_m,1)|\nu(H_p)|}{|\partial_{\xi}p|_g}}\Big]d\vol_{\Sigma_{x_0}}\\
&\qquad\qquad+\int 1_{\theta_m\geq 1}\Big[\sqrt{\frac{\theta_m|\nu(H_p)|}{|\partial_{\xi}p|_g}}-\sqrt{\frac{|\nu(H_p)|}{|\partial_{\xi}p|_g}}\Big]d\vol_{\Sigma_{x_0}}.
\end{align*}
Observe next that $\max(\theta_m,1)\to \max(f,1)$ a.e. and by the dominated convergence theorem, 
$$\int \Big[\sqrt{\frac{\max(\theta_m,1)|\nu(H_p)|}{|\partial_{\xi}p|_g}}\Big]d\vol_{\Sigma_{x_0}}\to \int \Big[\sqrt{\frac{\max(f,1)|\nu(H_p)|}{|\partial_{\xi}p|_g}}\Big]d\vol_{\Sigma_{x_0}}.$$
}

{
Next, 
\begin{align*}
\Big|\int 1_{\theta_m\geq  1}\Big[\sqrt{\frac{\theta_m|\nu(H_p)|}{|\partial_{\xi}p|_g}}-\sqrt{\frac{f|\nu(H_p)|}{|\partial_{\xi}p|_g}}\Big]d\vol_{\Sigma_{x_0}}\Big|&=\Big|\int 1_{\theta_m\geq 1}\frac{(\theta_m-f)\sqrt{|\nu(H_p)|}}{\sqrt{|\partial_{\xi}p|_g}(\sqrt{\theta_m}+\sqrt{f})}d\vol_{\Sigma_{x_0}}\Big|\\
&\leq C\|\theta_m-f\|_{L^1}.
\end{align*}
In particular, this proves that 
$$
\int \sqrt{\frac{\theta_m|\nu(H_p)|}{|\partial_{\xi}p|_g}}d\vol_{\Sigma_{x_0}}\to \int \sqrt{\frac{f|\nu(H_p)|}{|\partial_{\xi}p|_g}}d\vol_{\Sigma_{x_0}}.
$$
Therefore, for $m\geq M_1$,
$$
\int \sqrt{\frac{\theta_m|\nu(H_p)|}{|\partial_{\xi}p|_g}}d\vol_{\Sigma_{x_0}}\leq \int \sqrt{\frac{f|\nu(H_p)|}{|\partial_{\xi}p|_g}}d\vol_{\Sigma_{x_0}}+\e_0.
$$ 
Let $m\geq \max(M,M_1)$ and choose $\gamma$ small enough so that 
$$
T_\gamma \theta_m\leq C_n\int \sqrt{\frac{\theta_m|\nu(H_p)|}{|\partial_{\xi}p|_g}}d\vol_{\Sigma_{x_0}}+\e_0.
$$
Then,
$$
T_\gamma f\leq \e_0 +T_\gamma \theta_m\leq 2\e_0+ C_n\int \sqrt{\frac{f|\nu(H_p)|}{|\partial_{\xi}p|_g}}d\vol_{\Sigma_{x_0}} + C_n\e_0.
$$  
In particular, 
$$
\limsup_{\gamma\to 0}II=\limsup_{\gamma\to 0}T_\gamma f\leq C_n\int \sqrt{\frac{f|\nu(H_p)|}{|\partial_{\xi}p|_g}}d\vol_{\Sigma_{x_0}}.
$$
Therefore, sending $h\to 0$ then $\gamma \to 0$ and finally $\e\to 0$ proves the theorem.
} 

\end{proof}

\section{Construction of Modes - Proof of Theorem~\ref{thm:modes}}

\begin{proof}[Proof of Theorem~\ref{thm:modes}]
We apply the construction in \cite[Lemma 7]{SoggeTothZelditch}. Let $p=\frac{1}{2}(|\xi|_g^2-1)$ and $G_t=\exp(tH_p)$ so that $G_t|_{S^*M}$ is the unit speed geodesic flow. Let $g_1\in L^2(S^*_{\red{z_0}}M)$ have $|g_1|^2=f|_{S^*_{\red{z_0}}M}$ and $g_{1,\e}\in C^\infty(S^*_{\red{z_0}}M)$ have $\|g_{1,\e} - g_1\|_{L^2(S^*_{\red{z_0}}M)}<\e$. For $A\subset S^*_{\red{z_0}}M$ Borel, define the measure
$$
\tilde{\rho}_{\red{z_0}}(A)=\frac{1}{2\delta}\rho_{\red{z_0}}\left(\bigcup_{t=-\delta}^\delta G_t(A)\right).
$$
Let $g_{2,\e}\in C^\infty(S^*_{\red{z_0}}M)$ have $|g_{2,\e}|^2dS_\phi\underset{{\e\to 0}}{\longrightarrow} \tilde{\rho}_{\red{z_0}}$ as a measure where $S_\phi$ is the surface measure on $S^{n-1}$. Finally, define $g_\e=g_{1,\e}+g_{2,\e}$.

{We apply the arguments \cite[Section 2.3.1]{SoggeTothZelditch} to see that there exists $\Phi_{\e,j}$ such that}
$$
\| (-h_j^2\Delta_g-1)\Phi_{\e,j}\|_{L^2}=O_\e(h_j^2),\qquad C+O_{\e}(h_j)\geq \|\Phi_{\e,j}\|_{L^2}\geq c+O_{\e}(h_j).
$$
{and, in} normal geodesic coordinates at $\red{z_0}$, we have 
$$
\Phi_{\e,j}(x)=(2\pi h_j)^{\frac{1-n}{2}}\int e^{i\left\langle x,\frac{\theta}{|\theta|}\right\rangle/h_j}g_{\e}\left(\frac{\theta}{|\theta|}\right)\chi_R(|\theta|)d\theta,
$$
where $\chi_R\in C_c^\infty((0,\infty);[0,1])$ with $\chi_R\equiv 1$ on $[ 1,R]$, $\supp \chi_R\subset (0,2R)$ and 
\begin{equation}
\label{e:kangaroo}
\int \chi_R(\alpha)\alpha^{n-1}d\alpha=1.
\end{equation}
{The remainder of the proof consists of analyzing this oscillatory integral.}

Choose $\e_j\to 0$ so slowly that 
\begin{gather*} 
\lim_{j\to \infty}\| (-h_j^2\Delta_g-1^2)\Phi_{\e_j,j}\|_{L^2}h_j^{-1}\to 0,\qquad 2C\geq \limsup_{j\to \infty}\|\Phi_{\e_j,j}\|_{L^2}\geq \liminf_{j\to \infty}\|\Phi_{\e_j,j}\|_{L^2}>c/2.
 \end{gather*}
Then,
$$
\|(-h_j^2\Delta_g-1)\Phi_{\e_j,j}\|_{L^2}=o(h_j\|\Phi_{\e_j,j}\|_{L^2}).
$$

Fix $N>0$ to be chosen large and $\e_j\to 0$ slowly enough so that 
\begin{equation}
\label{e:derEst}
\sup_{|\alpha|\leq N}\sup_{S^*_{\red{z_0}}M}|\partial^{|\alpha|}g_{\e_j}|h_j\to 0.
\end{equation}

Under this condition, we compute the defect measure of $\Phi_{\e_j,j}$. {Note that since $\|\Phi_{\e_j,j}\|$ is uniformly bounded, we may assume by taking subsequences that the defect measure exists.} Let $b\in C_c^\infty(T^*M)$ supported in 
$$
A_{\delta}:=\{x\mid \delta\leq |{d_M}(\red{z_0},x)|\leq 2\delta\}
$$
{where $d_M$ is the distance on $M$.}
Then, letting $\psi\in C_c^\infty(\R \setminus\{0\})$ have $\psi\equiv 1$ on $[\delta,2\delta]$, 
$$
b(x,h_jD)\Phi_{\e_j,j}=(2\pi h_j)^{\frac{1-3n}{2}}\int e^{i\left(\langle x-y,\xi\rangle +\left\langle y,\frac{\theta}{|\theta|}\right\rangle\right)/h_j}b(x,\xi)\psi(|y|)g_{\e_j}\left(\frac{\theta}{|\theta|}\right)\chi_R(|\theta|)d\theta dyd\xi+O_{L^2}(h_j^\infty).
$$
Performing stationary phase in the $(y,\xi)$ variables gives
$$
b(x,h_jD)\Phi_{\e_j,j}=(2\pi h_j)^{\frac{1-n}{2}}\int e^{i\left\langle x,\frac{\theta}{|\theta|}\right\rangle/h_j}\left[b\left(x,\frac{\theta}{|\theta|}\right)+h_j e(x,\theta)\right]g_{\e_j}\left(\frac{\theta}{|\theta|}\right)\chi_R(|\theta|)d\theta +O_{L^2}(h_j^\infty)
$$
where $e\in C^\infty(\re^{2n})$ has $\supp r\subset \supp b$ and is independent of $\e$. 
\begin{multline*} 
\langle b(x,h_jD)\Phi_{\e_j,j},\Phi_{\e_j,j}\rangle=\\
 (2\pi h_j)^{1-n}\int_{A_\delta}\int e^{i\frac{|x|}{h_j}\left\langle \frac{x}{|x|}, \frac{\theta}{|\theta|}-\frac{\omega}{|\omega|}\right\rangle} g_{\e_j}\left(\frac{\theta}{|\theta|}\right)\left[b\left(x,\frac{\theta}{|\theta|}\right)+h_je(x,\theta)\right]\overline{g_{\e_j}\left(\frac{\omega}{|\omega|}\right)}\chi_R(|\theta|)\chi_R(|\omega|)d\theta d\omega dx \\+O(h_j^\infty).
\end{multline*}
We write the integral in polar coordinates $x=r\phi,$ $\theta=\alpha \Theta$, and $\omega=\beta\Omega$. Since $|r|>\delta$ on $A_\delta$, we {may} perform stationary phase in $\Omega$ and $\Theta$. Using~\eqref{e:derEst} with $N>n+2$ together with the remainder estimate \cite[Theorem 3.16]{EZB} to control the error uniformly as $j\to\infty$, gives
\begin{multline*} 
\int_{S^{n-1}}\int_{\mathbb{R}^3_+} [ |g_{\e_j}(\phi)|^2b(r\phi,\phi)+|g_{\e_j}(-\phi)|^2b(r\phi,-\phi)\\
+c_1e^{2ir/h}g_{\e_j}(\phi)\overline{g_{\e_j}(-\phi})b(r\phi,\phi)+c_2e^{-2ir/h}g_{\e_j}(-\phi)\overline{g_{\e_j}(\phi})b(r\phi,-\phi)]\alpha^{n-1}\beta^{n-1}\\
\chi_R(\alpha)\chi_R(\beta)\psi(r)d\alpha d\beta dr dS_\phi +o(1)
\end{multline*}
Integration by parts in $r$ then shows that the second two terms are lower order and yields
\begin{equation*} 
\int_{S^{n-1}}\int_{\mathbb{R}^3_+} [ |g_{\e_j}(\phi)|^2b(r\phi,\phi)+|g_{\e_j}(-\phi)|^2b(r\phi,-\phi)]\alpha^{n-1}\beta^{n-1}\chi_R(\alpha)\chi_R(\beta)d\alpha d\beta dr dS_\phi +o(1)
\end{equation*}
Sending $j\to \infty$ gives 
$$
\left(\int_0^\infty \chi_R(\alpha )\alpha^{n-1}d\alpha\right)^2\int_{\R}\int_{S^{n-1}}b(r\phi,\phi)(d\tilde{\rho}_{\red{z_0}}(\phi) +|g_1|^2dS_\phi)dr=\int_{\Lambda_{z_0}}b(x,\xi)(d\rho_{\red{z_0}}+fd\textup{Vol}_{\Lambda_{\red{z_0}}})
$$
where we use~\eqref{e:kangaroo}. 

Using that the defect measure of $\Phi_{\e_j,j}$ is invariant under $G_t$ then shows that $\Phi_{\e_j,j}$ has defect measure 
$$
\mu= d\rho_{\red{z_0}} +fd\vol_{\Lambda_{\red{z_0}}}.
$$
{This implies that for $\chi \in C_c^\infty(T^*M)$ with $\chi \equiv 1$ on $|\xi|_g\leq 2$, 
$$
\langle \chi(x,h_jD)\Phi_{\e_j,j},\Phi_{\e_j,j}\rangle_{L^2(M)}\to 1.
$$
Now, since $(-h_j^2\Delta_g-1)\Phi_{\e_j,j}=o_{L^2}\red{(h_j)}$, an elliptic parametrix construction as in~\eqref{e:elliptic} implies $(I-\chi(x,h_jD))\Phi_{\e_j,j}=o_{L^2(M)}(h_j)$ and 
and hence $\|\Phi_{\e_j,j}\|_{L^2}\to 1$. }

{Next observe that}
$$
\Phi_{\e_j,j}(\red{z_0})=(2\pi h_j)^{\frac{1-n}{2}}\int_{\R^n}g_{\e_j}\left(\frac{\theta}{|\theta|}\right)\chi_R(|\theta|)d\theta=(2\pi h_j)^{\frac{1-n}{2}}\int_{S^{n-1}}(g_{1,\e_j}(\phi)+g_{2,\e_j}(\phi))dS_\phi.
$$
Since $\tilde{\rho}_{\red{z_0}}\perp d\vol_{\Sigma_{\red{z_0}}}$ and $|g_{2,\e_j}|^2dS_\phi\to \tilde{\rho}_{\red{z_0}}$ as a measure, for any $\delta>0$, there exists $A\subset S^{n-1}$ so that 
$$
\int_{A^c}|g_{2,\e_j}|^2dS_\phi \to 0,\qquad \int_A dS_\phi<\delta.
$$
 Therefore, 
$$
\left|\int_{S^{n-1}}g_{2,\e_j}(\phi)dS_\phi\right|\leq C\left(\int_{A^c}|g_{2,\e_j}|^2dS_\phi\right)^{1/2} +\left(\int_{S^{n-1}}|g_{2,\e_j}|^2dS_\phi\right)^{1/2}\delta^{1/2}
$$
so, for all $\delta>0$, 
$$
\limsup_{j\to \infty}\left|\int_{S^{n-1}}g_{2,\e_j}(\phi)dS_\phi\right|\leq C\delta^{1/2}.
$$
In particular, 
$$
\lim_{j\to \infty}\int_{S^{n-1}}g_{2,\e_j}(\phi)dS_\phi =0.
$$
Finally, using that $g_{1,\e_j}\to g_1$ in $L^2$ and hence also in $L^1$
$$
\lim_{j\to \infty}u_j(\red{z_0})h_j^{\frac{n-1}{2}}=(2\pi)^{\frac{1-n}{2}}\int_{S^{n-1}} g_1(\phi)dS_\phi.
$$
Letting $u_j=\Phi_{\e_j,j}/\|\Phi_{\e_j,j}\|_{L^2}$ then proves the lemma. 
\end{proof}

\appendix
 \section{Semiclassical notation}\label{a:semiclassics}
 
{We next review the notation used for semiclassical operators and symbols and some of the basic properties. Recall that for a compact manifold $M$ of dimension $n$, we write
$$S^m(T^*M):=\{a(\,\cdot\,;h) \in C^\infty(T^*M):\;  |\partial_x^\alpha\partial_\xi^\beta a(x,\xi;h)|\leq C_{\alpha\beta}(1+|\xi|)^{m-|\beta|}\}$$
and $S^\infty(T^*M)=\bigcup_m S^m$. 
We write $\Psi^m(M)$ for the semiclassical pseudodifferential operators of order $m$ on $M$, $\Psi^\infty(M)=\bigcup_m\Psi^m(M)$ and 
$$ Op_h:S^m(T^*M)\to \Psi^m(M)$$
for a quantization procedure with $Op_h(1)=\Id+O_{\mathcal{D}'\to C^\infty}(h^\infty)$ and for $u$ supported in a coordinate patch, $\varphi\in C_c^\infty(M)$ with $\varphi\equiv 1$ on $\supp u$ we have
$$Op_h(a)u(x)=\frac{1}{(2\pi h)^n}\iint e^{\frac{i}{h}\langle x-y,\xi\rangle}\varphi(x)a(x,\xi)u(y)d\xi dy +O_{\mathcal{D'}\to C^\infty}(h^\infty)u.$$
We will often write $a(x,hD)$ for $Op_h(a)$.}

{
There exists a principal symbol map 
 $$
 \sigma:\Psi^m(M)\to S^m(T^*M)/hS^{m-1}(T^*M) 
 $$
 so that 
 \begin{gather*} 
 Op_h\circ \sigma (A)=A+O_{\Psi^{m-1}}(h),\quad A\in \Psi^m,\qquad\qquad\sigma \circ Op_h=\pi : S^m\to S^m/hS^{m-1},
 \end{gather*}
 where $\pi$ is the natural projection map. Moreover, for $A\in \Psi^{m_1},\,B\in\Psi^{m_2}$, \medskip
\begin{itemize} 
\item $\sigma(AB)=\sigma(A)\sigma(B)\;\; \in S^{m_1+m_2}/hS^{m_1+m_2-1},$ \medskip
\item  $\sigma([A,B])=\frac{h}{i}\big\{\sigma(A),\sigma(B)\big\}\;\;\in hS^{m_1+m_2-1}/h^2S^{m_1+m_2-2},$ \medskip
 \end{itemize}
 where $\{\cdot,\cdot\}$ denotes the poisson bracket. For more details on the semiclassical calculus see e.g. \cite[Chapters 4,14]{EZB} \cite[Appendix E]{ZwScat}. }


\bibliography{biblio}
\bibliographystyle{alpha}

\end{document}